\def\newblock{\ }%
\def\documenttitle{Exponential single server queues in an interactive random environment}
\global\long\def\phantomeq{\mathrel{\phantom{=}}}
\newtheorem{theorem}{Theorem}
\newtheorem{corollary}[theorem]{Corollary}
\newtheorem{definition}[theorem]{Definition}
\newtheorem{lemma}[theorem]{Lemma}
\newtheorem{proposition}[theorem]{Proposition}
\newtheorem{example}[theorem]{Example}
\newtheorem{remark}[theorem]{Remark}
\newtheorem{assumption}[theorem]{Assumption}
\newtheorem{conjecture}[theorem]{Conjecture}
\numberwithin{theorem}{section}
\numberwithin{equation}{section}
\newcommand{\Rentry}[3]{r_{#3}(#1,#2)}
\newcommand{\myv}{v}
\newcommand{\cal}[1]{\mathcal{#1}}
\newcommand{\fakephantomsection}{%
	\Hy@GlobalStepCount\Hy@linkcounter%
	\Hy@MakeCurrentHref{\@currenvir.\the\Hy@linkcounter}
	\Hy@raisedlink{\hyper@anchorstart{\@currentHref}\hyper@anchorend}%
}
\begin{document}
\title{\documenttitle}
\author{Sonja Otten\,\orcidlink{0000-0002-3124-832X}, 
	Ruslan Krenzler\,\orcidlink{0000-0002-6637-1168}, Hans Daduna\,\orcidlink{0000-0001-6570-3012}, Karsten Kruse\,\orcidlink{0000-0003-1864-4915}}


\keywords{interactive random environment, product form steady state, Lyapunov functions, throughput bounds, production-inventory systems}

\subjclass[2020]{60K25, 60K30, 60K37, 90B05, 90B22}

\begin{abstract}
	We consider exponential single server queues with state-dependent arrival and service rates which evolve under influences of external environments.
	The transitions of  the queues are influenced by the environment's state and the movements of the environment depend on the status of the queues (bi-directional interaction).
	The environment is constructed in a way to encompass various models from the recent Operations Research literature, where a queue is coupled with an inventory or with reliability issues.
	With a Markovian joint queueing-environment process we prove separability for a large class of such interactive systems, i.e.~the steady state distribution is of product form and explicitly given. The queue and the environment processes decouple asymptotically and in steady state.
	
	For non-separable systems we develop ergodicity and exponential ergodicity criteria via Lyapunov functions. By examples we explain principles for bounding departure rates of served customers (throughputs) of non-separable systems by throughputs of related separable systems as upper and lower bound.
\end{abstract}

\maketitle

\section{Introduction and literature review}\label{sect:intro}
Queueing systems which evolve under influences from external sources have found interest for long times. Today's emerging complex technological and logistic systems revived interest in such models. Features of interest in these systems are, e.g.~
services of different quality that are provided to individual customers, or to general differentiated demand, or even to data sets (messages in sensor networks), etc., under side-constraints of limited and shared resources and under restrictions which are external from the point of view of the service system.
The need for understanding the behaviour of these systems revived  investigations of queueing systems in a dedicated environment. Because in real life situations service systems are subject to randomness of interarrival times and service time requests, and environments are usually non-deterministic, the area of queues in random environments is today a field of active research in applied probability.

\subsection{Problem setting}\label{sect:ProblemSetting}
The model of a queue in a random environment studied in this note summarizes and unifies various models 
which emerged over the last two decades in
different research areas, especially in Operations Research.
The following standard example of a production-inventory system will serve as an introductory example and will be modified in  course of the article.

\begin{example}[Production-inventory system, see Figure~\ref{fig:BS-PER-figure-base-stock-model}]\label{ex:QueueInventory1}
	The production system  with an attached inventory considered here fits into the class of queueing systems in a random environment: The production system ($=$ exponential server) interacts with an  inventory and an associated replenishment system (supplier) (environment $=$ inventory-replenishment subsystem).
	
	\begin{figure}[h]
		\centering{}\includegraphics[width=0.95\textwidth]{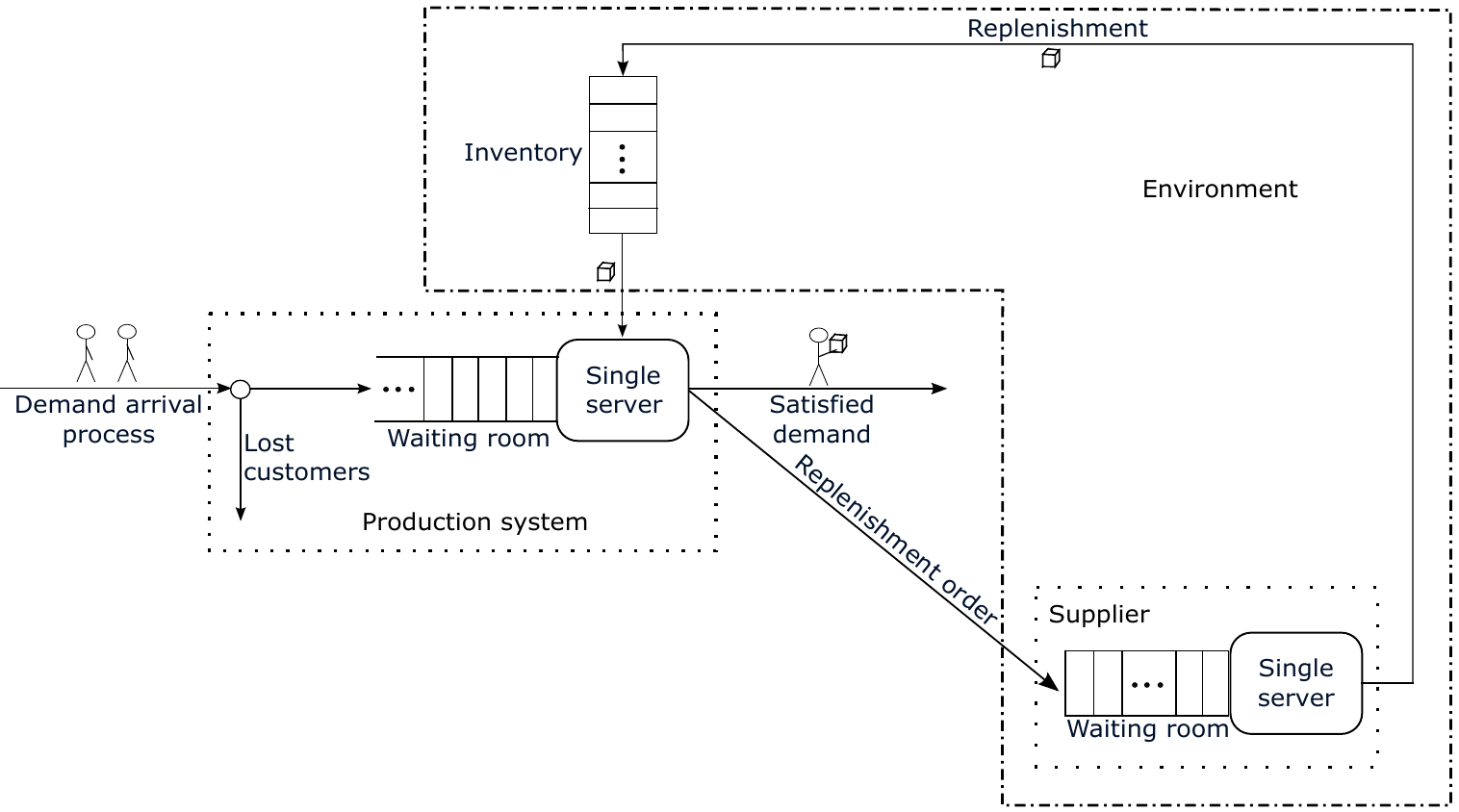}\caption{\label{fig:BS-PER-figure-base-stock-model}Production-inventory system}
	\end{figure}
	%
	The production system consists
	of a single server (machine) with infinite waiting room that serves
	demand of customers  on a make-to-order basis under first-come-first-served regime (FCFS).
	To satisfy a customer's demand the production system needs exactly one item of raw material from the associated inventory.
	
	Arriving customers join the queue unless the inventory is depleted ($=$ ``lost sales'' principle from inventory theory).	If customers are present and the inventory is not depleted, the customer at the head of the line is served and new arrivals are admitted. A customer departs from the system immediately after  service and the associated consumed raw material is formally removed from the inventory at this instant of time.	
	If the server is ready to serve a customer and the inventory is not depleted, service immediately starts. Otherwise, customers in the waiting line stay on and  service starts again  when the next replenishment arrives.

	The main characteristic of the production-inventory system  is in our setting the following: If the inventory is depleted, no service is possible and new arrivals are lost. A sketchy formal description of the queueing-environment interaction in this example is as follows. (More information will be given in Example \ref{ex:QueueInventory11}.)
	
	The production system is modelled as a standard exponential queueing system with state space $\mathbb{N}_0$ (queue lengths) and the inventory with state space $K:= \{0,1,\dots,b\}$ (inventory sizes) is its environment which influences the queue's development. The decisive properties for our class of models are: 
	(i) Whenever the state of the inventory is in the subset $K_{W}:= \left\{ 1,\ldots,b\right\} \subseteq K$, the queue is functioning properly  ({\textbf W}orks), service and arrival processes are ongoing. (ii) Whenever the state of the inventory is in the subset
	$K_{B}:= \left\{ 0\right\} \subseteq K$, the queueing system is completely stalled ({\textbf B}locked) (due to stock-out no production can be performed and because of the 
	lost sales regime no new arrivals occur).
	
	An important property of the system's dynamics is that neither the queue nor the inventory evolves autonomously.
	The production system can only serve if the inventory is not depleted (stock size $>0$), and the inventory can only decrease whenever  production is possible (queue length $>0$). We characterize this as a ``bi-directional interaction''. Note that the replenishment system is part of the environment, although it is only implicitly represented in $K$.
\end{example}

\subsection{Literature review}\label{sect:Literature}
\subsubsection{Queues in a random environment.} \label{sect:QueuesinRE}
A recent review of queueing-inventory systems (as in Example \ref{ex:QueueInventory1}) is the article of
\cite{krishnamoorthy;shajin;viswanath:19}, including 10 pages of references. The majority of the articles mentioned in this review is on non-separable systems, but separable queueing-inventory systems in the spirit of our investigations are compiled and discussed as well.

We observed that the dichotomy $K_B$ versus $K_W$, i.e.~a partition of $K$ occurs with many very different queueing models.		
Representative examples are e.g.:
\begin{enumerate}
	\item
	Supply chains of production facilities (modelled as  a queue) with an attached inventory as described in Example \ref{ex:QueueInventory1}.
	This model was investigated first in the articles of \cite{sigman;simchi-levi:92} and \cite{melikov;molchanov:92}, where demand in case of stock-out at the inventory is backordered. Intensive
	research on this model started with a series of articles by Berman and his coauthors, e.g.~\cite{berman;kim:99,berman;sapna:00,berman;sapna:02}. The first explicit result on stationary behaviour for such integrated production-inventory models is developed by \cite{schwarz;sauer;daduna;kulik;szekli:06}, where demand in case of stock-out at the inventory is lost. It was shown that the associated queueing-inventory process is separable. A review of separable queueing-inventory systems is the article of \cite{krishnamoorthy;lakshmy;manikandan:11}. More contributions which focus on product form steady states are the articles of \cite{saffari;haji;hassanzadeh:11,saffari;asmussen;haji:13}
	and the theses of \cite{vineetha:08} and \cite{otten:18}.
	\item
	Sensor networks, where a dedicated node  (test node, referenced node) featuring an internal message queue which
	interacts with a complex environment.  
	The environment  incorporates location and status of
	neighboured sensor nodes and  geographical conditions, as well as internal status information of the referenced node, e.g.~activity level or sleep mode. Here $K_B$ consists of those environmental states which indicate (among other properties of the system) that the referenced node is sleeping and can neither receive, process or forward messages. $K_W$ encompasses all other environment states
	and if the environment is in such state, the dedicated node's message queue is functioning properly.  A detailed study is given by \cite{krenzler;daduna:14},  where a compilation of related literature can be found in Section 1.
	\item
	Queues where the availability of service capacity  depends on external
	conditions and/or control decisions. These external conditions are collected in the environment set $K$ and the subset $K_B$ consists of those 
	states where the server is stalled,  e.g.~for preventive maintenance.
	This has been researched for decades, see a review in \cite{krishnamoorthy;pramod;chakravarthy:14}.
	Explicit formulas for the stationary distribution of such systems were derived by \cite{sauer;daduna:03}.
	A recent study of performability for a randomly degrading queue 
	with maintenance options in the spirit of our present research is the article of \cite{krenzler;daduna:15a}.
	\item
	Queueing networks where a node of special interest is embedded in the environment set $K$ constituted by the set of the other nodes, 
	are investigated e.g.~by \cite[Section 4.5.1]{dijk:93}  and \cite[Section 4.2.2]{krenzler;daduna:15}. 
	In a network with finite buffers a  typical example of a state in $K_B$  is defined by those states where the other nodes have full buffers and the node of special interest is therefore stalled.
	\item
	Queues in a random environment as an example for application of matrix-analytical methods in the framework of quasi-birth-death processes (QBDs). Typical  examples are Markov-arrival processes (MAPs) with queue-length-dependent service mechanisms and related structures, see e.g.\ \cite[Sections 3 and 6]{neuts:81}  for general principles.
	A control problem   for  queues in a random environment is investigated by \cite{helm;waldmann:84}. 
	The common feature of all these 
	quasi-birth-death process models is: The queue length process is the level process while the environment process is the phase process. It will become obvious that our systems can be formulated as QBDs, but most of the other mentioned QBDs do not obey the dichotomy $K_W$ versus $K_B$ for the states in the environment space $K$. 
	A note in the spirit of the present article is by \cite{economou:03a} where criteria for the existence of a  stationary distribution of product form are provided.
	\item  Closely related to our present research on queues  subject to external side-constraints  with accessible stationary distribution are the articles of  \cite{falin:96} and  \cite{krenzler;daduna:15}.
\end{enumerate}

\subsubsection{Related models.}\label{sect:RelatedModels}
A Markovian exponential queueing-environment process can be considered as  birth-death process in a random environment.
There exists a bulk of literature on that subject.
Classical birth-death processes (with discrete or continuous time) in a random environment found in the literature are not separable. 
We discuss some examples shortly.
Discrete time Markov chains in a Markovian random environment are investigated by \cite{cogburn:80, cogburn:84}. In the first paper classification of states are provided and in the second conditions for the existence of stationary distributions and for ergodicity are presented. \cite{cornez:87} investigated a discrete time birth-death process with absorbing state $0$ in a general environment with feedback (bi-directional interaction),
\cite{cogburn;torrez:81} investigated continuous time  birth-death processes in a Markovian random environment and provide criteria for recurrence and transience. Applications to queues in a Markovian environment are sketched.
\cite{yechiali:73} considered  continuous time birth-death processes in a finite ergodic Markovian environment. The birth and death rates depend on the environment state and the population size.
The focus is on stationary regime and it is shown that ``in general, closed-form results for the limiting probabilities are difficult to obtain''\cite[p.~604]{yechiali:73}. For population size independent birth-death rates a special condition is found that allows to obtain geometrical steady state distribution. 
\cite{prabhu;zhu:89, prabhu;zhu:95} investigated single server systems under Markov-modulation (which is similar to a Markovian environment). In the first paper the modulated Poissonian arrival stream generates single customer arrivals, while in the second paper group arrivals are considered. The focus is on stationary behaviour.

Typical problems with computing stationary distributions and performance metrics which occur even with finite (non-separable) birth-death processes in a random environment are described in \cite{gaver;jacobs;latouche:84}  and investigated using matrix-analytical techniques.

In \cite{gannon;pechersky;suhov;yambartsev:16} a reversible Jacksonian queueing network is the environment for a random walker (a distinguished customer) on the network lattice. This work was extended in \cite{daduna:16}, where the random walker was substituted by a travelling server (``moving queue'') on the network. This was motivated by modeling a referenced mobile sensor node with an internal message queue (test node) in a network of mobile sensor nodes. Although the stationary distributions occurring in \cite{daduna:16} are separable, the transition mechanism of the system does not fit in the class of models considered here in Section \ref{sect:Separable}.

A recent study of an $M/M/1$-queue  in  abstract ``interactive'' environments is the article of \cite{pang;sarantsev;belopolskaya;suhov:20}, where the environment is either diffusive (continuous environment space) or a jump environment (discrete environment space).
We consider  only discrete environment spaces.
Our systems and processes differ from those in \cite[Section 2]{pang;sarantsev;belopolskaya;suhov:20}
because we allow (i) that the arrival and service rates are queue-length-dependent, and (ii) that the queueing system and the environment may jump concurrently.
Recall that in Example \ref{ex:QueueInventory1}  with  departures  of served customers the environment ($=$ inventory size) decreases at the same moment by one. Such concurrent jumps are not allowed by \cite{pang;sarantsev;belopolskaya;suhov:20}. On the other side, the dependence on the environment's state for the  arrival and service rates  are more specific in our setting.

A related class of models are random walks on $\mathbb{Z}$ in a random medium, see  Part I of \cite{sznitman:02} for an introduction. Our research in this article is on different problems than those described there and our methods are different from those used generally in that field.

\subsection{Research plan}\label{sect:Plan}
We are interested in a unified Markovian description for a class of general queueing-environment systems. The above examples reveal general principles: {\bf(i)} The environment state space $K$ is divided into disjoint subsets: $K_W$ and $K_B$. {\bf(ii)}
The evolution of the system follows general rules but in any case some special environment conditions, modelled as states in $K_B$ interrupt the dynamics of the queueing system, and the server is stalled as long as these conditions hold on.
{\bf(iii)} The environment is non-autonomous with respect to the queueing system, i.e.~its dynamics depend on the status of the associated queue.  
This bi-directional interaction is different from most work in the literature but reflects many real situations, as shown in examples above.
{\bf(iv)} Concurrent jumps of the queue and the environment occur with positive probability.\\
Special emphasis will be put on
\begin{itemize}
	\item separable models with  product form steady state:
	Asymptotically and in equilibrium  the queue and the environment as components (in space) of the 1-dimensional marginals in time decouple.
	(In steady state the queue and the environment seem to behave independently at fixed times.) This usually allows to determine ergodicity conditions directly, and
	\item  non-separable models where ergodicity and exponential ergodicity conditions via construction of Lyapunov functions will be  proven. In this case the infinite system of global balance equations of the joint queueing-environment process are usually not explicitly solvable.
\end{itemize}

The environment is always modelled by a discrete state space. In Sections \ref{sect:model}, \ref{sect:Separable}, \ref{sect:FiNModel} and \ref{sect:ergodicity-necessary} the environment is allowed to be infinite, from Section \ref{sect:ergodicity-Lyapunov} on we assume the environment to be finite.

\subsection{Main results and techniques}\label{sect:MainResults}
\begin{enumerate}
	\item[\textbf{(i)}] We identify a large class of separable queues
	with finite or infinite waiting room in a non-autonomous random environment. We compute explicitly the stationary distribution which opens the path for performance evaluation of these systems. The main part of the proof is to substitute the two-dimensional set of global balance equations by a set of independent one-dimensional equations with the same solution.
	This is combined with the stationary distribution of the isolated queue which is well known.
	\item[\textbf{(ii)}] For non-separable systems we provide necessary conditions for ergodicity which reveal a hidden geometrical structure of the (unknown) stationary distribution. Sufficient criteria for ergodicity and exponential ergodicity are proved using a Lyapunov function approach.
	The main technique is to start with an ergodic version of the queue in isolation (which usually can be easily characterized)  and an associated Lyapunov function for the queue only. Taking this function as partial function of the two-dimensional target function, 
	a second partial (environment) function is attached to obtain the final function.
	\item[\textbf{(iii)}] We take the explicit results of \textbf{(i)} to approximate performance measures of the ergodic (proved via \textbf{(ii)}) non-separable system (no stationary distribution at hand) using lower and upper bounds
	of modified versions of the target system which are separable.
	The main technique is to construct suitable reward processes which generate the respective performance measures.
\end{enumerate}

\subsubsection{Structure of the article.}
In Section \ref{sect:model} we describe the general model of a queueing system in a non-autonomous random environment.
In Section \ref{sect:Separable} we characterize separability of the queueing-environment process and derive ergodicity conditions and the stationary distribution. The case of a finite waiting room is investigated in Section \ref{sect:FiNModel}.
In Section \ref{sect:Non-separable} we investigate non-separable queueing-environment systems.
In Section \ref{sect:ergodicity-necessary} we prove a necessary condition for ergodicity of non-separable queueing systems in a random environment, which is trivially valid in the separable case.
In Section \ref{sect:ergodicity-Lyapunov} we provide sufficient conditions for ergodicity by constructing a Lyapunov function which indicates negative drift of the queueing-environment process. The section ends with a non-separable modification of the introductory Example \ref{ex:QueueInventory1}. In Section \ref{sect:expo-ergodicity-Lyapunov} we prove conditions for exponential ergodicity.
In Section \ref{sect:BoundingNPF} we combine our findings on separable and non-separable systems by showing that in many cases it is possible to find for a non-separable system related separable partner systems such that performance indices of the former (which are not explicitly computable) can be bounded by the respective indices of the separable partner.


\subsubsection{Notations, definitions and conventions.}
\begin{itemize}
	\item $\mathbb{R}_{0}^{+}:= [0,\infty)$, $\mathbb{R}^{+}:= (0,\infty)$, $\mathbb{N}$:= \{1,2,3,\dots\}, 
	$\mathbb{N}_{0}:= \{0\}\cup\mathbb{N}$
	\item Empty sums are 0, and empty products are 1.
	\item $1_{\left\{ expression\right\} }$ is the indicator function which
	is $1$ if $expression$ is true and $0$ otherwise.
	\item We write $C:=A + B$ to
	emphasize that $C$ is the union of disjoint sets $A$ and $B$.
	\item For $a\in \mathbb{R}$ we set  $a_{+}:=\max\{0,a\}$.
	\item All random variables and processes occurring henceforth are defined
	on a common underlying probability space $(\Omega,{\cal F},P)$.
\end{itemize}

Queueing systems in a random environment 
are described in this article by homogeneous Markov processes with countable (discrete) state space. All processes occurring henceforth have the properties summarized in the following definition.

\begin{definition}\label{def:MarkovProperties}
	A Markov process $X$ with state space $E$ and transition rate matrix $\mathbf{Q}$ is regular if
	(i)	all states are stable, i.e.~all diagonal elements of $\mathbf{Q}$ are finite, and (ii) $\mathbf{Q}$ is conservative, i.e.~its row sums are zero, and (iii) the process is non-explosive, i.e.~the sequence of jump times of the process diverges almost surely.  $X$ has cadlag paths, i.e.~each path of  $X$ is right-continuous and has left limits everywhere.
	
	The Markov process $X$ is ergodic if there exists a probability measure $\pi:=(\pi(z):z\in E)$ such that
	$P(X(t)=z|X(0)=\hat z)\longrightarrow \pi(z)$ for $t\to\infty$ holds for all $z\in E$ independent of the initial state $X(0)=\hat z$, $\hat z \in E$. $\pi$ is the asymptotic and stationary distribution of $X$.
	
	An ergodic Markov process $X$ with asymptotic distribution $\pi$
	is exponentially ergodic if there exists some $\alpha>0$
	and constants $C_{z,\hat{z}}>0, {z,\hat{z}}\in E,$
	such that $|P(X(t)=z|X(0)=\hat{z})-\pi(z)|\leq 
	C_{z,\hat{z}}\cdot e^{-\alpha\cdot t}$ for all ${z,\hat{z}}\in E$ 	holds.
\end{definition}   

We use the following Foster-Lyapunov criteria for ergodicity (see \cite[Proposition D.3]{kelly;yudovina:14}) and exponential ergodicity (see \cite[Theorem 6.5]{anderson:91}).
\begin{proposition}\label{prop:KellyYudovina14}
	Let $X:=(X(t):t\geq 0)$ be an irreducible  regular Markov process with countable state space $E$ and transition rate matrix $\mathbf{Q}:=(q(x;y):x,y\in E)$.
	Suppose that ${\cal L}:E\to [0,\infty)$ is a function such that for constants $\varepsilon>0$ and $b\in \mathbb{R}$, and some finite exception set $F\subset E$ and all $x\in E$ it holds
	\begin{equation}
	\sum_{y\in E\setminus\{x\}}q(x;y)\left[{\cal L}(y)-{\cal L}(x)\right]\leq
	\begin{cases}
	-\varepsilon &x\notin F,\\
	b-\varepsilon & x\in F.
	\end{cases}
	\end{equation}
	Then $X$ is ergodic.
\end{proposition}	

\begin{proposition}\label{prop:Anderson91}
	Let $X:=(X(t):t\geq 0)$ be an ergodic Markov process with countable state space $E$ and transition rate matrix $\mathbf{Q}:=(q(x;y):x,y\in E)$.
	$X$ is exponentially ergodic if and only if there exists a function
	${\cal M}:E\to [0,\infty)$, a finite exception set $F\subset E$, and some $\omega\in(0, \inf_{x\in E\setminus F}(-q(x;x)))$ such that the following holds:
	\begin{align}
	{\cal M}(x) &=0	,~ x\in F, \label{eq:ExpoErg2}\\
	\sum_{y\in E}q(x;y) {\cal M}(y) &<\infty, ~ x\in F,
	\label{eq:ExpoErg3}\\
	\sum_{y\in E\setminus\{x\}}q(x;y)\left[{\cal M}(y)-{\cal M}(x)\right]&\leq -\omega {\cal M}(x) - 1,~~~
	x\in E\setminus F. \label{eq:ExpoErg1}
	\end{align}
\end{proposition}	
The functions ${\cal L}$ and  ${\cal M}$ in the above propositions are called drift functions or \emph{Lyapunov functions}. In the literature,  Lyapunov functions are utilized as test functions to prove other properties of Markov processes (explosion, non-explosion, absorption) as well.
Usually the processes' behaviour, especially the drift, is characterized via transformation of the test functions by the infinitesimal generator. 


\section{The model: Queue in an interactive random environment}\label{sect:model}

Our starting point is the classical $M/M/1/\infty$-queue with queue-length-dependent rates under first-come-first-served regime (FCFS).
Customers are indistinguishable. If the queue length (i.e.~number of customers either waiting or in service) is $n\geq 0$, customers arrive at the system with rate $\lambda(n)>0,$  
and if $n\geq 1$, service is provided to the customer at the head of the line with rate $\mu(n)>0$. We set formally $\mu(0):= 0$.

Setting in force the usual (conditional) independence assumptions, the queue length process  $X:= (X(t):t\geq 0)$ with state space $\mathbb{N}_0$ is Markov. It is the simplest example of a birth-death process, which in case of ergodicity has stationary distribution
$\xi:= (\xi(n):n\in\mathbb{N}_0)$ with
\begin{equation}\label{eq:queue-xi}
\xi(n) :=C^{-1} \cdot \prod_{k=0}^{n-1}\frac{\lambda(k)}{\mu(k+1)},\quad n\in\mathbb{N}_0,
\end{equation}
where $C$ is the normalization constant.

We consider the situation where the service system  and the arrival stream are subject to external random influences which disturb  the queueing process, see Figure \ref{fig:queue-env}.
\begin{figure}[h]
	\centering{}\includegraphics[width=0.6\textwidth]{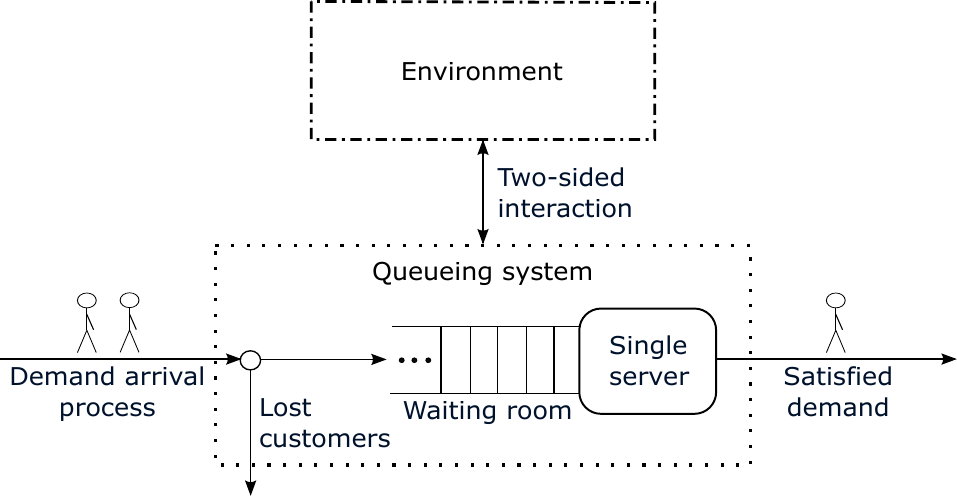}\caption{\label{fig:queue-env}Queue in a random environment}
\end{figure}
The states of the environment are summarized as
countable environment state space $K\neq \varnothing$ and the environment process is denoted by
$Y:= (Y(t):t\geq 0)$. The joint queueing-environment process is
$Z:=(X,Y)=((X(t),Y(t)):t\geq 0)$ on state space $E:= \mathbb{N}_0\times K$, i.e.~$Z(t):= (X(t),Y(t))=(n,k)$ indicates that at time $t$  the queue length is $n$ and the environment's state is $k$.

In most investigations found in the literature (e.g.~\cite{foss;shneer;tyurlikov:12}, \cite{zhu:94},  \cite{economou:05})  an \emph{autonomous environment} is considered:
This means that the environment process $Y=(Y(t):t\geq 0)$  on $K$ is Markov of its own, and the
state of the environment influences arrival and service rates of the queue. Consequently, in this
situation  there is only a "one-way interaction".

We consider in this note mainly the case of \emph{non-autonomous environments}:
Then the environment process $Y$  on $K$ is not Markov of its own.
The state of the environment and state of the queue influence transitions of the other component of the system
vice-versa, which results in a "two-way interaction".
In case of birth-death processes in a random environment this is often termed "feedback property of the environment",  see e.g.~\cite{cornez:87}.

In all applications mentioned in the introduction, for the  joint queueing-environment process $Z=(X,Y)$ on  $E=\mathbb{N}_0\times K$ we observe that
the environment space  $K$ is partitioned  as a disjoint union $K = K_W + K_B$ with the  following meaning and consequences:
\begin{itemize}
	\item$Y(t)\in K_{B}$ $ :\Longleftrightarrow$ at time  $t$ no service is provided and no arrivals occur,
	i.e.~the queue length process $X$ is frozen (= {server is {\bf B}locked}).
	\item$Y(t)\in K_{W}$ $:\Longleftrightarrow$  at time  $t$ the server is functioning, new arrivals
	are admitted (= {server  {\bf W}orks}).
\end{itemize}
The dynamics of the environment are as follows: If the queue length is $n\in \mathbb{N}_0$, then
\begin{itemize}
	\item a generator $V_n:= (v_n(k,m):k,m\in K)$ governs continuous changes 	of  the environment for $n\geq 0,$ and
	\item a stochastic matrix $R_n:= (r_n{(k,m)}:k,m\in K)$ governs instantaneous jumps of the environment 
	triggered by service  completions (downward jumps of the queue)
	for $n\geq1$.
\end{itemize}

We henceforth assume that
$Z=(X,Y)=((X(t),Y(t)):t\geq 0)$  is a Markov process on $E = \mathbb{N}_{0}\times K$.
The characterizing data for the system's development are  $\lambda(n)$, $\mu(n)$, the
countable  environment {$K=K_W + K_B$}, and the driving components for the environment $R_n$ and $V_n$.
The generator $\mathbf{Q}:= ({q}(z,\tilde z):z, \tilde z\in E)$ of 
$Z$ is with generic state $(n,k)\in E$:
\begin{align}
q\left((n,k);(n+1,k)\right) & := \lambda(n)\cdot{1_{\left\{ k\in K_{W}\right\} }}, \label{generator1}\\
q\left((n,k);(n-1,\ell)\right) & := \mu(n)\cdot {r_{n}(k,\ell)}\cdot1_{\left\{ n>0\right\} }\cdot
{1_{\left\{ k\in K_{W}\right\} }},& \ell\in K,\nonumber\\
q\left((n,k);(n,\ell)\right) & := {v_{n}(k,\ell)}, & \ell\in K,\ k\neq\ell,\nonumber
\end{align}
and $q(z;\tilde{z}):= 0$ for  other $z\neq\tilde{z}$,
and
$q\left(z;z\right):= -\sum_{\substack{\tilde{z}\in E,\\
		z\neq\tilde{z}} }q\left(z;\tilde{z}\right)$ for all $z\in E.$

We note that such processes can be considered as quasi-birth-death  processes with level-dependent phase-dynamics.

As pointed out in Section 1 the class of Markov processes $Z=(X,Y)$ on $E=\mathbb{N}_0\times K$ with generator given by \eqref{generator1}
encompasses a rich class of examples from different application areas. The typical examples lead to Markov processes $Z$ which are irreducible on $E$. On the other side, the general construction \eqref{generator1}
allows examples of reducible systems  which seem to be of no specific interest. We therefore set in force the following assumption.
\begin{assumption}\label{ass:irreducible}
	We assume throughout that $Z=(X,Y)$, resp. $\mathbf{Q}$ is irreducible on $E$.
\end{assumption}
%
\begin{remark}{} \label{rem:NotErgodic} Although $Z=(X,Y)$ is a Markov process, neither $X$ nor $Y$ is in general Markov. Moreover, even under Assumption \ref{ass:irreducible}
	neither the $R_n$ nor the $V_n$ need to be irreducible  nor determines an ergodic Markov chain, respectively an ergodic Markov process.
\end{remark}


\section{Separable queueing-environment systems}\label{sect:Separable}

Separability means roughly that the state vector of a multi-dimensional Markov process in equilibrium has for
a fixed moment independent coordinates. The classical examples are Jackson networks of queues
(\cite{jackson:57}) and their generalizations as BCMP networks
(\cite{baskett;chandy;muntz;palacios:75}), resp.~Kelly networks (\cite{kelly:76}).
We are interested in conditions which guarantee this asymptotic (resp. equilibrium) independence for the pair $Z=(X,Y)$.
A characterization theorem for the case when the  dynamics of the environment are independent
of queue lengths, i.e.~$V_n=V$, $R_n=R$ for all $n$, is proved in \cite[Theorem 2]{krenzler;daduna:12}.
We provide a characterization for the general case here.
%
%
\begin{theorem}\fakephantomsection
	\label{theorem:Separable1}
	\begin{enumerate}
		\item[{\bf (a)}] For $n\in \mathbb N_0$ define ``reduced generators''  ${\mathbf{Q}}_{red}^{(n)}:= ({q}_{red}^{(n)}(k,m):k,m\in K)$
		on the environment space $K$ via $\mathbf{Q}$ by
		\begin{align*}
			{q}_{red}^{(n)}(k,m) & :=  \lambda{(n)}\cdot r_{n+1}(k,m)\cdot 1_{\{k\in K_{W}\}}+v_n(k,m),\qquad k\neq m,\\
			{q}_{red}^{(n)}(k,k) & :=  -[1_{\{k\in K_{W}\}}\cdot \lambda{(n)}\cdot 
			(1-r_{n+1}(k,k))+\sum_{m\in K\backslash\{k\}}v_n(k,m)].
		\end{align*}
		The ``reduced generators'' ${\mathbf{Q}}_{red}^{(n)}$ are generators for  Markov processes on $K$.
		\item[{\bf (b)}] The following properties are equivalent:
		\begin{enumerate}
			\item[{\bf (i)}] $Z=(X,Y)$ is ergodic with product form steady state $\pi = \xi \cdot \theta$, with $\xi:= (\xi(n):n\in\mathbb{N}_0)$ from \eqref{eq:queue-xi}, i.e.
			\begin{equation}\label{eq:LS-product-form}
			\pi(n,k)= C^{-1}\cdot \left(\prod_{i=0}^{n-1}\frac{\lambda(i)}{\mu{(i+1)}}\right) \cdot \theta(k),\quad
			(n,k)\in E,
			\end{equation}
			where $\theta:= (\theta(k):k\in K)$ is a probability distribution on $K$.
			\item[{\bf (ii)}] The summability condition
			$ C  := \sum_{n=0}^{\infty}\prod_{i=0}^{n-1}\frac{\lambda{(i)}}{\mu{(i+1)}} <\infty$
			holds, and  the equation
			\begin{equation}\label{eq:LS-q-tilde-0-conditon}
			\theta\cdot{\mathbf{Q}}_{red}^{(0)}=0
			\end{equation}
			admits a strictly positive stochastic solution $\theta := (\theta(k):k\in K)$ which solves also
			\begin{equation}\label{eq:LS-q-tilde-n-conditon}
			\forall n\in \mathbb N:\theta\cdot{\mathbf{Q}}_{red}^{(n)}=0 .
			\end{equation}
		\end{enumerate}
	\end{enumerate}
\end{theorem}
%
%
\proof
\textbf{(a)} Let $k\in K$. By definition we have   ${q}_{red}^{(n)}(k,m)\geq 0$ for all $m\in K\setminus{k}$ and ${q}_{red}^{(n)}(k,k)\leq 0.$ It holds
\begin{align*}
	&\phantomeq\sum_{m\in K\setminus \{k\}} {q}_{red}^{(n)}(k,m)
	=\sum_{m\in K\setminus \{k\}} \left( \lambda{(n)}\cdot r_{n+1}(k,m)\cdot 1_{\{k\in K_{W}\}}+v_n(k,m)\right)\\
	&= \lambda{(n)}\cdot 1_{\{k\in K_{W}\}}
	\underbrace{\sum_{m\in K\setminus \{k\}}r_{n+1}(k,m)}_{= 1- r_{n+1}(k,k)} +
	\sum_{m\in K\setminus \{k\}}v_n(k,m) = 
	- {q}_{red}^{(n)}(k,k).
\end{align*}	
So  the row sums of all
${\mathbf{Q}}_{red}^{(n)}, n\in\mathbb{N}_0,$ are zero.

\textbf{(b)} \textbf{(ii)} $\mathbf{\Rightarrow}$ \textbf{(i):}
By assumption (\ref{eq:LS-q-tilde-0-conditon}) there exists a stochastic
solution to $\theta\cdot {\mathbf{Q}}_{red}^{(0)}=0$, which according to requirement
(\ref{eq:LS-q-tilde-n-conditon}) is a solution of $\theta\cdot {\mathbf{Q}}_{red}^{(n)}=0$ too.
Due to the summability condition $C<\infty$ we can use our $\theta$ and $C$ to define $\pi(n,k)$ by the right-hand side of (\ref{eq:LS-product-form}). Next, we show that $\pi$ fulfils the global balance equation $\pi\cdot \mathbf{Q}=\mathbf{0}$ of the Markov process $(X,Y)$, which are for
$(n,k)\in E$ 
\begin{align}
&  \phantomeq \pi(n,k)\cdot \Big(1_{\{k\in K_{W}\}}\cdot \lambda(n)+\sum_{m\in K\backslash\{k\}}\myv_n(k,m)+1_{\{k\in K_{W}\}}\cdot 1_{\{n>0\}}\cdot \mu(n)\Big)\nonumber \\
& =  \pi(n-1,k)\cdot 1_{\{k\in K_{W}\}}\cdot 1_{\{n>0\}}\cdot \lambda(n-1)+\sum_{m\in K_{W}}\pi(n+1,m)\cdot \Rentry mk{n+1}\cdot \mu(n+1)\nonumber \\
&  \phantomeq +\sum_{m\in K\backslash\{k\}}\pi(n,m)\cdot  \myv_n(m,k).\label{eq:LS-lba-general-2-2}
\end{align}

Inserting the proposed product form solution (\ref{eq:LS-product-form})
for the stationary distribution into the global balance
equations (\ref{eq:LS-lba-general-2-2}), cancelling $C^{-1}$
and multiplying with $\prod_{i=0}^{n-1}\left(\frac{\lambda(i)}{\mu(i+1)}\right)^{-1}$
yields
\begin{align*}
	&  \phantomeq \theta(k)\cdot \Big(1_{\{k\in K_{W}\}}\cdot \lambda(n)+\sum_{m\in K\backslash\{k\}}\myv_n(k,m)+
	\underbrace{1_{\{k\in K_{W}\}}\cdot 1_{\{n>0\}}\cdot \mu(n)}_{(*)}\Big)\\
	& =  \underbrace{\theta(k)\cdot \frac{\mu(n)}{\lambda(n-1)}\cdot 1_{\{k\in K_{W}\}}\cdot 1_{\{n>0\}}\cdot \lambda(n-1)}_{(**)}
	+\sum_{m\in K_{W}}\theta(m)\cdot \frac{\lambda(n)}{\mu(n+1)}\cdot \Rentry mk{n+1}\cdot \mu(n+1)\\
	&  \phantomeq +\sum_{m\in K\backslash\{k\}}\theta(m)\cdot \myv_n(m,k).
\end{align*}
Cancelling for $n\geq 1$ the expressions $(\theta(k)\cdot (*))$ and $(**)$ yields for all $n\geq0$
\begin{align*}
	&  \phantomeq \theta(k)\cdot \Big(1_{\{k\in K_{W}\}}\cdot \lambda(n)+\sum_{m\in K\backslash\{k\}}\myv_n(k,m)\Big)\\
	& =  \sum_{m\in K_{W}}\theta(m)\cdot {\lambda(n)}\cdot \Rentry mk{n+1}
	+\sum_{m\in K\backslash\{k\}}\theta(m)\cdot \myv_n(m,k).
\end{align*}
This implies
\begin{align}
&  \phantomeq \theta(k)\cdot \underbrace{\Big(1_{\{k\in K_{W}\}}\cdot \lambda(n)\cdot (1- \Rentry kk{n+1})
	+\sum_{m\in K\backslash\{k\}}\myv_n(k,m)\Big)}_{= - q_{red}^{(n)}(k,k)}\nonumber\\
& =  \sum_{m\in K\setminus\{k\}}\theta(m)\cdot \underbrace{\left(
	1_{\{m\in K_{W}\}}\cdot {\lambda(n)}\cdot \Rentry mk{n+1} +\myv_n(m,k)\right)}_{= q_{red}^{(n)}(m,k)},
\label{eq:LS-equal-solution}
\end{align}
which is for all $n\geq 0$ the condition \eqref{eq:LS-q-tilde-0-conditon},
respectively \eqref{eq:LS-q-tilde-n-conditon}, i.e.~$\theta\cdot {\mathbf{Q}}_{red}^{(n)}=0$.
Hence, we conclude  that \eqref{eq:LS-q-tilde-0-conditon} and \eqref{eq:LS-q-tilde-n-conditon} guarantee that (\ref{eq:LS-product-form}) solves (\ref{eq:LS-lba-general-2-2}).
Therefore, the global balance equations of $Z$ admit the  strictly positive stochastic solution
$\pi= \xi\cdot \theta$, which is unique by Assumption \ref{ass:irreducible},
and so $Z$ is ergodic.

\textbf{(b)} \textbf{(i)} $\mathbf{\Rightarrow}$ \textbf{(ii):}
Because $\pi$ is stochastic and of product form, summability 
holds. Insert the stochastic vector of product form \eqref{eq:LS-product-form}
into \eqref{eq:LS-lba-general-2-2}. As shown in the part \textbf{(ii)}
$\mathbf{\Rightarrow}$ \textbf{(i)} of the proof, this leads to \eqref{eq:LS-equal-solution}
and we have found a strictly positive stochastic solution $\theta$
which solves \eqref{eq:LS-q-tilde-0-conditon} and
\eqref{eq:LS-q-tilde-n-conditon} for all $n\in\mathbb{N}$.	\endproof

%
\begin{remark}\label{rem:Tn}
	The reduced generators ${\mathbf{Q}}_{red}^{(n)}=({q}_{red}^{(n)}(k,m):k,m\in K)$ can be considered as generalizations of the generators
	$\mathbf{T}_n, n\in \mathbb{N}_0,$ in the construction of the Markovian jump generators in \cite[Section 2]{pang;sarantsev;belopolskaya;suhov:20}.
	The reduced generators ${\mathbf{Q}}_{red}^{(n)}$
	enable concurrent jumps in two dimensions for  the original generator $\mathbf{Q}$.
	The property that $\theta$ is the common solution of the equations 
	\eqref{eq:LS-q-tilde-0-conditon} and
	\eqref{eq:LS-q-tilde-n-conditon}   
	is  parallel to Assumption 2.1 in \cite{pang;sarantsev;belopolskaya;suhov:20} required for the $\mathbf{T}_n, n\in \mathbb{N}_0,$ there.  
\end{remark}

%
\begin{example}[Production-inventory system, see 
	Example \ref{ex:QueueInventory1} with Figure~\ref{fig:BS-PER-figure-base-stock-model}]\label{ex:QueueInventory11}
	As described in the introduction this production-inventory system  fits into the definition of the queueing system in a random environment described by a Markov process $Z=(X,Y)=\text{(queue-length,\ inventory size)}$ on state space $E=\mathbb{N}_0\times K$ with $K:= \{0,1,\dots,b\}$ and
	\[
	K_{B}:= \left\{0\right\} ,\qquad K_{W}:= \left\{ 1,\ldots,b\right\}.
	\]
	$Y(t)\in K_{W}$ indicates for the inventory that there is 
	stock on hand for production, and $Y(t)\in K_{B}$ indicates stock-out. 
	
	The inventory is controlled according to the base stock policy, i.e.~each item taken from the inventory triggers  an immediate order for one item of raw material at the supplier. The base stock level
	$b\geq1$ is the maximal size of the inventory. The supplier consists of a single server  with waiting room of size $b-1$
	under FCFS. Service times to produce one item of raw material at the supplier are exponentially distributed with parameter $\nu>0$.
	A finished item of raw material departs immediately from the supplier and is added to the inventory.
	
	Note that the physical environment of the production system includes the replenishment system. The status of the replenishment server at time $t$ is uniquely determined by the size of the inventory as $b-Y(t)$.
	
	The production system is a single server queue with state-dependent rates. If the queue length is $n\geq 0$, service is provided with rate $\mu(n)>0$, $n\geq 1$, to the customer at the head of the line (if any) and the arrival stream has rate $\lambda(n)>0$.
	The dynamics of $Z$ are determined by the infinitesimal generator $\mathbf{Q}=\left(q(z;\widetilde{z}):z,\widetilde{z}\in E\right)$ with the following transition rates for $(n,k)\in E =\mathbb{N}_{0}\times K$:
	\begin{align*}
	q\left((n,k);(n+1,k)\right) & := \lambda(n)\cdot{1_{\left\{k\in\left\{ 1,\ldots,b\right\}\right\}}},\\
	q\left((n,k);(n-1,k-1)\right) & := \mu(n) \cdot 1_{\left\{ n>0\right\} }\cdot
	{1_{\left\{ k\in\left\{ 1,\ldots,b\right\}\right\} }},\\
	q\left((n,k);(n,k+1)\right) & := \nu \cdot 1_{\left\{ k\in\left\{ 0,1,\ldots,b-1\right\}\right\}},
	\end{align*}
	and $q(z;\tilde{z}):= 0$ for  other $z\neq\tilde{z}$,
	and
	$q\left(z;z\right):= -\sum_{\substack{\tilde{z}\in E,\\
			z\neq\tilde{z}} }q\left(z;\tilde{z}\right)$ for all $z\in E.$
	
	The queue-length-dependent dynamics of the inventory process are determined by 
	\[
	r_{n}(0,0):= 1,\qquad r_{n}(k,k-1):= 1,\quad k\in\left\{ 1,\ldots,b\right\},\ n\in\mathbb{N},
	\]
	and $r_n(k,\ell):=0$ for other $k,\ell\in K,\ n\in\mathbb{N}$,
	\begin{align*}
	v_{n}(k,\ell) & := \begin{cases}
	\nu, & \text{if }k\in\left\{ 0,1,\ldots,b-1\right\},\ \ell=k+1,\\
	0, & \text{otherwise for }k\neq\ell,
	\end{cases}\quad n\geq0,
	\end{align*}
	and $v_n\left(k,k\right):= -\sum_{\substack{\ell\in K,\\
			k\neq\ell} }v_n\left(k,\ell\right)$ for all $k\in K$ and $n\geq 0$.
	
	If $\lambda(n)=\lambda$ for all $n\in \mathbb{N}$ and some $\lambda >0$ and the production-inventory process is ergodic, then the stationary distribution $\pi$ is given by
	\begin{equation}\label{eq:QueueingInventoryPF}
	\pi(n,k):=  \xi(n)\cdot \theta (k), \quad (n,k)\in E,
	\end{equation}
	\begin{align*}
	\text{with}\qquad \xi(n):= C^{-1} \cdot \prod_{i=0}^{n-1}\frac{\lambda}{\mu{(i+1)}},  \  n\in \mathbb{N}_{0},\qquad
	\theta(k):= C_{\theta}^{-1}\cdot \left(\frac{\nu}{\lambda}\right)^{k}, \  k\in K,
	\end{align*}
	and normalization constants $C$ and $C_{\theta}$. 
	Here $\theta$ is the steady state of the  classical inventory (without service system) with demand rate $\lambda$.
\end{example}
\medskip

In  Section \ref{sect:BoundingNPF} we present further examples of  separable queueing-environment systems. But as will be seen there, queue-length-dependent environment dynamics often lead to models which do not satisfy the conditions of Theorem \ref{theorem:Separable1}. We therefore discuss in the next example systems with simple queue-length-dependent
environment dynamics which  either \textbf{(a)} prevent common jumps of the queue and the environment or  \textbf{(b)} enforce concurrent jumps of the queue and the environment.
We complement these separable examples in \textbf{(c)} with a slight modification of  \textbf{(b)} which destroys separability.

%

\begin{example}[Queue-length dependent environment dynamics: Availability]
	\label{ex:QueueInventory1A}
	We consider an ergodic $M/M/1/\infty$-queue with arrival rate $\lambda(n)$ and service rate $\mu(n)$ where the server and the arrival stream are randomly interrupted by breakdown of the server. The server is under repair for a random time after each breakdown.
	Different interruption schemes will be investigated.
	
	In any case the availability of the server is subject to interruption and restart by an alternating exponential renewal process (on-off process) with queue-length-dependent rates.
	The state space of the environment is $K:= \{0,1\}$, where $0 =$ off and  $1 =$ on. In terms of our general model $K_B=\{0\}$ and $K_W=\{1\}$. For queue length $n$ the mean off-times are $\eta(n)^{-1}$ and the mean on-times are $\gamma(n)^{-1}$.
	We specify the dynamics of the environment in three examples by generator matrices $V_n$ and jump matrices $R_n$ and obtain separable and non separable systems.
	\begin{enumerate}
		\item[\textbf{(a)}] Service system with breakdown and repair: The queueing system and the environment have \emph{no simultaneous jumps} (which is the general situation of \cite{pang;sarantsev;belopolskaya;suhov:20}).
		The generators $V_n$ are given with $\eta(n):=\eta\cdot (n+1)$ and $\gamma(n):=\gamma\cdot (n+1)$ for some $\eta,\gamma>0$ by
		\begin{equation}\label{eq:Vn}
		v_n(0,0):=-\eta\cdot (n+1),\ \
		v_n(0,1):= \eta\cdot (n+1),\ \
		v_n(1,0):= \gamma\cdot (n+1), \ \
		v_n(1,1):= -\gamma\cdot (n+1),
		\ n\geq 0.
		\end{equation}
		To exclude jumps of the environment when a customer departs, the jump matrices are taken as identity 
		$R_n:=  [1_{\{i=j\}}:i,j\in\{0,1\}]$.
		It follows ${\mathbf Q}^{(n)}_{red} = V_n$
		and the common probability solution $\theta$ of 
		$
		\theta\cdot {\mathbf{Q}}^{(n)}_{red} =0,~ n\geq 0,
		$
		is independent of the arrival and service rates,
		\begin{equation}\label{eq:UnrelA}
		\theta =\left(\frac{\gamma}{\gamma +\eta},\frac{\eta}{\gamma +\eta} \right).
		\end{equation}
		Consequently, the system is \textbf{separable} by Theorem
		\ref{theorem:Separable1}.
		
		The next examples are modifications of \textbf{(a)} which \emph{allow the environment to jump concurrently with the queue}. Both systems occur as ``vacation queues'' in the literature.
		\item[\textbf{(b)}] In a service system with breakdown and repair the server takes a vacation whenever a service is completed. The generators $V_n$ from  \eqref{eq:Vn} control the continuous changes of the environment. 
		The jump matrices are given by
		$r_{n+1}(1,0):=1$, $r_{n+1}(0,0):=1$ for $n\geq 0$, and zero otherwise, which says that whenever a service is completed the server is not available for a random time (takes a vacation).
		With linear arrival rates 
		$\lambda(n):=\lambda\cdot(n+1)$ for $n\geq 0$ and some $\lambda>0$ and any service rate function $\mu(\cdot)$
		the common probability solution $\theta$ of 
		$
		\theta\cdot {\mathbf{Q}}^{(n)}_{red} =0, ~ n\geq 0,
		$
		is
		\begin{equation}\label{eq:UnrelB}
		\theta =\left(\frac{\lambda+\gamma}{\lambda+\gamma +\eta},\frac{\eta}{\lambda+\gamma +\eta} \right).
		\end{equation}
		Consequently, the system is \textbf{separable} by Theorem
		\ref{theorem:Separable1}.
		This specific vacation policy
		occurs in investigations of polling systems. If only one queue of a multi-queue polling system is investigated, the time when the server polls and serves the other queue is modelled as a vacation. If the queue of interest is controlled by the so-called 1-limited policy, then after each service the server takes a vacation, see  \cite{boon;boxma;winands:11} for a short introduction, for more details see \cite{takagi:90a}.
		\item[\textbf{(c)}] In a service system with breakdown and repair the server takes a vacation whenever the queue is empty after a service is completed. The generators $V_n$ from  \eqref{eq:Vn} control the continuous changes of the environment. 
		The jump matrices are 
		$r_{n+1}(1,1):=1,\ r_{n+1}(0,0):=1$ for $n\geq 1$, while $r_{1}(1,0):=1,\ r_{1}(0,0):=1$  and zero otherwise.
		So, whenever a departing customer leaves behind an empty queue, then for a random time the server takes a vacation because it serves somewhere else.
		Direct computation shows that 
		$
		\theta\cdot {\mathbf{Q}}^{(0)}_{red} =0$
		is solved by \eqref{eq:UnrelB} and 
		$
		\theta\cdot {\mathbf{Q}}^{(n)}_{red} =0, ~ n\geq 1,
		$
		is solved by \eqref{eq:UnrelA}. Consequently, the system is \textbf{not separable}. This control policy 
		is the standard vacation policy, which is applied to reduce idle times of servers (\cite{doshi:90}).
	\end{enumerate}
	
\end{example}

\newpage
\section{Separable queue with finite waiting room in a random environment}\label{sect:FiNModel}  

In this section we consider  the queueing-environment system described in Section  \ref{sect:model} with the restriction that the waiting room of the queue has finite capacity $N\geq 0$. So at most $N+1$ customers can reside in the system, either in service or waiting. Customers which arrive when the waiting room is full are lost for the system. We use the same notation as in Section \ref{sect:Separable} to make comparison easy.

The queue length process  $X:= (X(t):t\geq 0)$ (with states $\{0,1,\dots,N+1\}$) of the  $M/M/1/N$-queue with queue-length-dependent rates is an ergodic Markov process. Its stationary distribution is
$\xi:= (\xi(n):n\in\mathbb{N}_0)$ with
\begin{equation}\label{eq:FiNqueue-xi}
\xi(n) :=C^{-1} \cdot \prod_{k=0}^{n-1}\frac{\lambda(k)}{\mu(k+1)},\quad n\in\{0,1,\dots,N+1\},
\end{equation}
where $C$ is the normalization constant.
The stationary distribution 
\eqref{eq:FiNqueue-xi} can be obtained from the stationary distribution \eqref{eq:queue-xi} of a stationary $M/M/1/\infty$-queue by conditioning on the event ``queue length $\leq N+1$''. Shortly:
Truncation of the waiting room yields a stationary distribution obtained by conditioning.
This fact results from reversibility of the queue length process of a stationary $M/M/1/\infty$-queue.
We will show in Remark \ref{rem:truncation} below that  due to  problems arising at the boundary $\{N+1\}\times K$ of the state space a similar truncation-conditioning principle does not apply in general for the case of ergodic queueing-environment processes.

With  environment space $K = K_W + K_B$ as in Section  \ref{sect:model} we consider the joint queueing-environment process
$Z=(X,Y)=((X(t),Y(t)):t\geq 0)$ on state space $E:= \{0,1,\dots,N+1\}\times K$.
$Z(t)= (X(t),Y(t))=(n,k)$ indicates that at time $t$  the queue length is $n$ and the environment state is $k$.
The dynamics of the environment are similar to those described in
Section  \ref{sect:model}. For queue length  $n\in\{0,1,\dots,N+1\}$ 
\begin{itemize}
	\item a generator matrix $V_n:= (v_n(k,m):k,m\in K)$ governs continuous changes
	of  the environment, and
	\item a stochastic matrix $R_n:= (r_n{(k,m)}:k,m\in K)$ governs instantaneous jumps of the environment 
	triggered by service  completions (downward jumps of the queue).
\end{itemize}

We assume that the queueing-environment process
$Z$  is an irreducible Markov process on the state space $E =\{0,1,\dots,N+1\}\times K$ and note that Remark \ref{rem:NotErgodic} applies here as well.
The generator $\mathbf{Q}:= ({q}(z,\tilde z):z, \tilde z\in E)$ of the Markov process
$Z$ is with generic state $(n,k)\in E$:
\begin{align}
q\left((n,k);(n+1,k)\right) & := \lambda(n)\cdot{1_{\left\{ k\in K_{W}\right\} }} \cdot{1_{\left\{n\leq N\right\} }}, \label{FiNgenerator1}\\
q\left((n,k);(n-1,\ell)\right) & := \mu(n)\cdot {r_{n}(k,\ell)}\cdot1_{\left\{ n>0\right\} }\cdot
{1_{\left\{ k\in K_{W}\right\} }},& \ell\in K,\nonumber\\
q\left((n,k);(n,\ell)\right) & := {v_{n}(k,\ell)}, & \ell\in K,\ k\neq\ell,\nonumber
\end{align}
and $q(z;\tilde{z}):= 0$ for  other $z\neq\tilde{z}$,
and
$q\left(z;z\right):= -\sum_{\substack{\tilde{z}\in E,\\
		z\neq\tilde{z}} }q\left(z;\tilde{z}\right)$ for all $z\in E.$

We are looking for conditions which guarantee separability, i.e.~asymptotic independence for the pair $Z(t)=(X(t),Y(t))$.
A characterization theorem for the case when the  dynamic of the environment is independent
of queue length, i.e.~$V_n=V$, $R_n=R$ for all $n$, is proved in 
\cite[Section 3]{krenzler;daduna:15}  and \cite[Section 2.1.2]{krenzler:16}.
Although the general case investigated here is similar to Theorem  \ref{theorem:Separable1} we  encounter additional problems.
%
%
\begin{theorem}\fakephantomsection
	\label{theorem:FiNSeparable1}
	\begin{enumerate}
		\item[{\bf (a)}] For $n\in\{0,1,\dots,N+1\}$ define ``reduced generators''  ${\mathbf{Q}}_{red}^{(n)}:= ({q}_{red}^{(n)}(k,m):k,m\in K)$
		on the environment space $K$ via $\mathbf{Q}$ by
		\begin{align*}
			{q}_{red}^{(n)}(k,m) & :=  \lambda{(n)}\cdot r_{n+1}(k,m)\cdot 1_{\{k\in K_{W}\}}
			\cdot 1_{\{n\leq N\}}+v_n(k,m),\qquad k\neq m,\\
			{q}_{red}^{(n)}(k,k) & :=  -[1_{\{k\in K_{W}\}}\cdot 1_{\{n\leq N\}}\cdot \lambda{(n)}\cdot 
			(1-r_{n+1}(k,k))+\sum_{m\in K\backslash\{k\}}v_n(k,m)].
		\end{align*}
		The ``reduced generators'' ${\mathbf{Q}}_{red}^{(n)}$ are generators for  Markov processes on $K$.
		\item[{\bf (b)}] The following properties are equivalent:
		\begin{enumerate}
			\item[{\bf (i)}] $Z=(X,Y)$ is ergodic with product form steady state $\pi = \xi \cdot \theta$, with $\xi:= (\xi(n):n\in\mathbb{N}_0)$ from \eqref{eq:FiNqueue-xi}, i.e.
			\begin{equation} \label{eq:FiNLS-product-form}
			\pi(n,k)= C^{-1}\cdot \left(\prod_{i=0}^{n-1}\frac{\lambda(i)}{\mu{(i+1)}}\right) \cdot \theta(k),\quad
			(n,k)\in E,
			\end{equation}
			where $\theta:= (\theta(k):k\in K)$ is a probability distribution on $K$.
			\item[{\bf (ii)}]The equation
			\begin{equation}\label{eq:FiNLS-q-tilde-0-conditon}
			\theta\cdot{\mathbf{Q}}_{red}^{(0)}=0
			\end{equation}
			admits a strictly positive stochastic solution $\theta:= (\theta(k):k\in K)$ which solves also
			\begin{equation}\label{eq:FiNLS-q-tilde-n-conditon}
			\forall n\in \{1,\dots,N+1\}:\theta\cdot{\mathbf{Q}}_{red}^{(n)}=0 .
			\end{equation}
		\end{enumerate}
	\end{enumerate}
	
\end{theorem}
%
%
\proof
\textbf{(a)} By definition  ${q}_{red}^{(n)}(k,m)\geq 0$,
and ${q}_{red}^{(n)}(k,k)\leq 0$ for $k,m\in K$, $k\neq m$. Direct summation shows that the row sums of 
${\mathbf{Q}}_{red}^{(n)}$ are zero for all $n\in\{0,1,\dots,N+1\}$.

\textbf{(b)} \textbf{(ii)} $\mathbf{\Rightarrow}$ \textbf{(i):}
By assumption (\ref{eq:FiNLS-q-tilde-0-conditon}) there exists a strictly positive stochastic
solution to $\theta\cdot {\mathbf{Q}}_{red}^{(0)}=0$, which according to requirement
(\ref{eq:FiNLS-q-tilde-n-conditon}) is a solution of $\theta\cdot {\mathbf{Q}}_{red}^{(n)}=0$ for all $n\in\{1,\dots,N+1\}$ as well.
We define $\pi(n,k)$ by the right-hand side of
\eqref{eq:FiNLS-product-form}
and show that this $\pi$ fulfils the global balance  equations $\pi\cdot \mathbf{Q}=\mathbf{0}$ of the Markov process $(X,Y)$ which are for
$(n,k)\in E$
\begin{align}
&  \phantomeq \pi(n,k)\cdot \Big(1_{\{k\in K_{W}\}}\cdot \lambda(n) \cdot 1_{\{n\leq N\}}+\sum_{m\in K\backslash\{k\}}\myv_n(k,m)+1_{\{k\in K_{W}\}}\cdot 1_{\{n>0\}}\cdot \mu(n)\Big)\nonumber \\
& =  \pi(n-1,k)\cdot 1_{\{k\in K_{W}\}}\cdot 1_{\{n>0\}}\cdot \lambda(n-1)+\sum_{m\in K_{W}}\pi(n+1,m)\cdot \Rentry mk{n+1}\cdot \mu(n+1)\cdot 1_{\{n\leq N\}}\nonumber \\
&  \phantomeq +\sum_{m\in K\backslash\{k\}}\pi(n,m)\cdot \myv_n(m,k).\label{eq:FiNLS-lba-general-2-2}
\end{align}

Inserting the proposed product form solution
(\ref{eq:FiNLS-product-form}) 
for the stationary distribution into the global balance
equations (\ref{eq:FiNLS-lba-general-2-2}), cancelling $C^{-1}$
and multiplying with $\prod_{i=0}^{n-1}\left(\frac{\lambda(i)}{\mu(i+1)}\right)^{-1}$
yields
\begin{align*}
	&  \phantomeq \theta(k)\cdot \Big(1_{\{k\in K_{W}\}}\cdot \lambda(n)\cdot 1_{\{n\leq N\}}+\sum_{m\in K\backslash\{k\}}\myv_n(k,m)+
	\underbrace{1_{\{k\in K_{W}\}}\cdot 1_{\{n>0\}}\cdot \mu(n)}_{(*)}\Big)\\
	& =  \underbrace{\theta(k)\cdot \frac{\mu(n)}{\lambda(n-1)}\cdot 1_{\{k\in K_{W}\}}\cdot 1_{\{n>0\}}\cdot \lambda(n-1)}_{(**)}
	+\sum_{m\in K_{W}}\theta(m)\cdot \frac{\lambda(n)}{\mu(n+1)}\cdot \Rentry mk{n+1}\cdot \mu(n+1)\cdot 1_{\{n\leq N\}}\\
	&  \phantomeq +\sum_{m\in K\backslash\{k\}}\theta(m)\cdot \myv_n(m,k).
\end{align*}
Cancelling 	the expressions $(\theta(k)\cdot (*))$ and $(**)$ yields for all $n$ with $N+1\geq n\geq0$
\begin{align*}
	&  \phantomeq \theta(k)\cdot \Big(1_{\{k\in K_{W}\}}\cdot \lambda(n)\cdot 1_{\{n\leq N\}}+\sum_{m\in K\backslash\{k\}}\myv_n(k,m)\Big)\\
	& =  \sum_{m\in K_{W}}\theta(m)\cdot {\lambda(n)}\cdot \Rentry mk{n+1}\cdot 1_{\{n\leq N\}}
	+\sum_{m\in K\backslash\{k\}}\theta(m)\cdot \myv_n(m,k).
\end{align*}
This implies
\begin{align}
&  \phantomeq \theta(k)\cdot \underbrace{\Big(1_{\{k\in K_{W}\}}\cdot \lambda(n)\cdot 1_{\{n\leq N\}}\cdot (1- \Rentry kk{n+1})
	+\sum_{m\in K\backslash\{k\}}\myv_n(k,m)\Big)}_{= -  q_{red}^{(n)}(k,k)}\nonumber\\
& =  \sum_{m\in K\setminus\{k\}}\theta(m)\cdot \underbrace{\left(
	1_{\{m\in K_{W}\}}\cdot {\lambda(n)}\cdot \Rentry mk{n+1}\cdot 1_{\{n\leq N\}} +\myv_n(m,k)\right)}_{= q_{red}^{(n)}(m,k)},
\label{eq:FiNLS-equal-solution}
\end{align}
which is for all $n$ with  $N+1\geq n\geq 0$ the condition \eqref{eq:FiNLS-q-tilde-0-conditon},
respectively \eqref{eq:FiNLS-q-tilde-n-conditon}, i.e.~$\theta\cdot {\mathbf{Q}}_{red}^{(n)}=0$.
Hence, we conclude  that \eqref{eq:FiNLS-q-tilde-0-conditon} and \eqref{eq:FiNLS-q-tilde-n-conditon} guarantee that
(\ref{eq:FiNLS-product-form}) 	
solves (\ref{eq:FiNLS-lba-general-2-2}).
Therefore, the global balance equations of $Z$ admit the strictly positive stochastic solution
$\pi= \xi\cdot \theta$, which is unique by irreducibility
and so $Z$ is ergodic.

\textbf{(b)} \textbf{(i)} $\mathbf{\Rightarrow}$ \textbf{(ii):}
Take the stochastic vector $\pi$  of product form from
(\ref{eq:FiNLS-product-form}) and insert it
into \eqref{eq:FiNLS-lba-general-2-2}. As shown in the part \textbf{(ii)}
$\mathbf{\Rightarrow}$ \textbf{(i)} of the proof, this leads to \eqref{eq:FiNLS-equal-solution} and we have found a strictly positive stochastic solution $\theta$ which solves \eqref{eq:FiNLS-q-tilde-0-conditon} and
\eqref{eq:FiNLS-q-tilde-n-conditon} for all $n = 1, \dots, N+1$.	\endproof

\begin{remark}\label{rem:truncation}
	The statement of Theorem \ref{theorem:FiNSeparable1} is at a first glance
	(up to the size of the waiting room) almost identical to that of Theorem \ref{theorem:Separable1}. But there are subtleties which result from the boundary of the state space at finite height $N+1$.
	Consider for $n:= N+1$ condition \eqref{eq:FiNLS-q-tilde-n-conditon},
	which is in full detail \eqref{eq:FiNLS-equal-solution}. We conclude that  
	$\theta\cdot {\mathbf{Q}}_{red}^{(N+1)}=0$
	is just $\theta\cdot V_{N+1}=0$.
	Consequently, if we have a stationary separable queueing-environment system with infinite waiting room as discussed in Theorem
	\ref{theorem:Separable1}, a queueing-environment system with finite queue (say, of length $N$) derived by truncation of the waiting room, has a stationary distribution obtained by conditioning on the reduced state space if and only if the distribution $\theta$ on $K$ fulfils the conditions of Theorem \ref{theorem:Separable1} \emph{and} satisfies $\theta\cdot V_{N+1}=0$ or equivalently 
	\begin{equation}
	\theta(k)\cdot 1_{\{k\in K_{W}\}}\cdot  (1- \Rentry kk{N+2})
	= \sum_{m\in K\setminus\{k\}}\theta(m)\cdot 
	1_{\{m\in K_{W}\}}\cdot \Rentry mk{N+2}.
	\label{eq:FiNLS-equal-solutionConditioning1}
	\end{equation}
	Taking any $k\in K_B$ in \eqref{eq:FiNLS-equal-solutionConditioning1}, we see that for all $m\in K_W$ it must hold $\Rentry mk{N+2}=0$, i.e.
	the set $K_W$ is closed under $\Rentry \cdot\cdot{N+2}$. 
	This means that in the system with unbounded waiting room
	the subset $\{0,1,\dots,N+1\}\times K$ can be entered from above only by a customer's departure (which is only possible if the environment state is in $K_W$) without a jump out of $K_W$ into the set $K_B$.
\end{remark}

\begin{example}[Production-inventory system with finite capacity]  \label{ex:QueueInventory1Finite}
	Production-inventory systems with finite capacity of the waiting room have been considered in the literature, see e.g.~\cite{melikov;molchanov:92}, \cite{yadavalli;sivakumar;arivarignan:07,yadavalli;sivakumar;arivarignan;adetunji:12} and \cite[Section 6]{schwarz;sauer;daduna;kulik;szekli:06}.
	We consider a variant of the production-inventory system which is part of a ``transportation-storage system'' in \cite{melikov;molchanov:92}.
	
	In the model of Figure~\ref{fig:BS-PER-figure-base-stock-model} we assume that the single production server has a restricted waiting room of capacity $N\geq 0.$
	The maximal inventory size (for items of raw material needed for production)  is $Q>0.$ 
	An order for new raw material is placed immediately when the stock size drops down to $0$. 
	The time for delivering the order from the replenishment server is 
	exponentially distributed with parameter $\nu>0$, the interarrival time of customers is
	exponentially distributed with parameter $\lambda>0$, the service time is exponentially distributed with parameter $\mu>0$.
	The order size is random with
	distribution  $\kappa_n:=(\kappa_n(i):i=1,\dots,Q)$,
	when the queue length is $n$ at the moment of ordering.

	In \cite{melikov;molchanov:92}
	it is assumed that newly arriving requests are admitted to enter the system and are backordered as long as the waiting room is not full.
	So the arrival stream at the server is not interrupted when the stock reaches $0$. Therefore due to  backordering this policy does not fit into our scheme of environment behaviour.
	
	We therefore consider the companion lost sales inventory policy:
	Whenever the stock size drops down to $0$, newly arriving requests are rejected (similar to \cite{schwarz;sauer;daduna;kulik;szekli:06}). 
	The set $K_B:=\{0\}$ 
	is a ``blocking set'' in the sense of Section \ref{sect:model} and the environment space is with $K_W:= \{1,\dots,Q\}$  partitioned as $K=K_W+K_B$.
	
	The joint production-inventory process $Z=(X,Y)$ is (with the usual independence assumptions) Markov and we assume that the order size distributions guarantee that it is irreducible on $E:=
	\Big(\{0,1,\dots,N\}\times\{0,\dots,Q\}\Big)\cup\Big(\{N+1\}\times\{1,\dots,Q\}\Big).$ The ``state'' $(N+1,0)$ cannot be attained because stock size $0$ can only be entered when a customer departs concurrently.
	
	The dynamics of $Z$ are determined by the infinitesimal generator $\mathbf{Q}:=\left(q(z;\widetilde{z}):z,\widetilde{z}\in E\right)$ with the following transition rates for generic states $(n,k)\in E$:
	\begin{align*}
	q\left((n,k);(n+1,k)\right) & := \lambda\cdot{1_{\left\{k\in\left\{ 1,\ldots,Q\right\}\right\}}}\cdot{1_{\left\{n\in\left\{ 0,1,\ldots,N\right\}\right\}}},\\
	q\left((n,k);(n-1,k-1)\right) & := \mu \cdot 1_{\left\{ n>0\right\} }\cdot
	{1_{\left\{ k\in\left\{1,\ldots,Q\right\}\right\} }},\\
	q\left((n,0);(n,i)\right) & := \nu \cdot\kappa_n(i),
	\qquad i\in\{1,\dots,Q\},
	\end{align*}
	and $q(z;\tilde{z}):= 0$ for  other $z\neq\tilde{z}$,
	and
	$q\left(z;z\right):= -\sum_{\substack{\tilde{z}\in E,\\
			z\neq\tilde{z}} }q\left(z;\tilde{z}\right)$ for all $z\in E.$
	
	The queue-length-dependent dynamics of the inventory process are determined by 
	\begin{align*} 
	r_{n}(0,0):= 1,&\qquad r_{n}(k,k-1):= 1,\quad k\in\left\{ 1,\ldots,Q\right\},\ n\in \{1,\dots,N\},\\
	v_{n}(k,\ell) & := \begin{cases}
	\nu\cdot\kappa_n(i), & \text{if }k=0, \ell =i,
	~~ i\in \{1,\dots,Q\},\\
	0, & \text{otherwise for }k\neq\ell,
	\end{cases}\quad n\geq0.
	\end{align*}
	$Z$ is irreducible on $E$ and ergodic by $|E|<\infty$. Therefore, a unique stationary distribution $\pi=(\pi(n,k):(n,k)\in E)$ exists.
	\cite{melikov;molchanov:92} realized that there is no directly accessible solution $\pi$, which seems to be due to the dependence
	of the order size distribution on the queue length. Fortunately enough, for the case of state-independent order size distribution $\kappa:=(\kappa(i):i=1,\dots,Q)$
	the stationary distribution in case of lost sales is obtained in Theorem 6.2 in \cite{schwarz;sauer;daduna;kulik;szekli:06}.
	It holds
	with normalization constant $K$
	\begin{align*}
	\pi(n,0)&=K^{-1} \cdot \left(\frac{\lambda}{\mu}\right)^n\cdot\frac{\lambda}{\nu},
	& 0\leq n\leq N,\\
	\pi(n,k)&=K^{-1} \cdot\left(\frac{\lambda}{\mu}\right)^n\cdot
	\left(\sum_{h=k}^{Q} \kappa(h)\right),& 0\leq n\leq N+1,~~~
	1\leq k\leq Q.
	\end{align*}
	We remark that in the production-inventory system of \cite{melikov;molchanov:92}
	the reorder level is $q\in \{0,1,\dots,Q-1\}.$ When this inventory level is attained, no service is performed until the replenishment  arrived. Therefore, the number $q$ may be interpreted as safety stock which has to be maintained in any case and inventory levels below $q$ need not be incorporated into the process description under the lost sales regime. 
\end{example}	

The stationary distribution in Example \ref{ex:QueueInventory1Finite} has a ``product form'' because $\pi(n,k)$ is composed of two factors. This does not indicate separability of the model because the state space is not a product space. This is
required for a two-dimensional distribution with independent coordinates.

%
%

\section{Non-separable queueing-environment systems}\label{sect:Non-separable}
Ergodicity in case of separable queueing-environment systems is in most cases easy to detect because of the product  structure for the solution of the global balance equations
of the process.
This is in general not the case for non-separable systems which are dealt with in this section.
We provide an exception from this general statement at the end of Subsection  \ref{sect:ergodicity-Lyapunov} in Example \ref{ex:QueueInventory2} and Corollary \ref{cor:QueueInventoryPerishable-PF} by considering a slight modification of Example  \ref{ex:QueueInventory1} and compute  in Corollary \ref{cor:QueueInventoryPerishable-PF} the solution of the global balance equations which is available  for this example but not of product form.
Nevertheless, from the explicit solution ergodicity can be proved directly.
In this section the focus is on systems where this is not possible because an explicit expression for the solution of the global balance equations of $Z=(X,Y)$ seems to be out of reach. So the criterion of summability of that solution is not applicable for proving (exponential) ergodicity.

Instead we construct Lyapunov functions (drift functions) for verifying (exponential) ergodicity.
Usually, such a construction is not an easy task but we succeeded in both cases with constructing Lyapunov functions to apply the relevant Propositions \ref{prop:KellyYudovina14} and \ref{prop:Anderson91}.

Our guiding principle in the construction is the following.
For $Z$ to be ergodic the queueing component $X$ in isolation, i.e.~a birth-death process with associated rates $\lambda(n),\mu(n)$ should be ergodic with some suitable Lyapunov function $\widetilde{\mathcal{L}}: \mathbb{N}_0\to [0,\infty)$.
Then we construct a 2-dimensional Lyapunov function 
${\mathcal{L}}: \mathbb{N}_0\times K\to [0,\infty)$ where the first coordinate is (roughly) a modified version of $\widetilde{\mathcal{L}}$ and attach a queue-length-dependent second coordinate function.
Clearly, this is the main difficulty because we cannot expect to find Lyapunov functions for the second (environment) components in isolation because the generators $V_n$ and the jump transition matrices $R_n$ are in general neither irreducible nor ergodic.
For exponential ergodicity we proceed in an analogous way.

We  start this section with a necessary condition for ergodicity in Proposition \ref{prop:random-abh-notw}
which strongly supports our guiding principle described above.
In Section \ref{sect:ergodicity-Lyapunov} for the case of finite environment space   sufficient conditions for positive recurrence are proved in \prettyref{thm:BS-PER-GER-pos-rec} and Corollary \ref{cor:BS-PER-GER-pos-rec-2}. 
Exponential ergodicity is investigated in Section \ref{sect:expo-ergodicity-Lyapunov}.\\

\subsection{A necessary condition for ergodicity}\label{sect:ergodicity-necessary}

We start with a proposition which is of independent interest because it underpins the importance of
the environment's structure and its stalling feature for the queueing system. We emphasize that in this subsection the environment space is allowed to be countably infinite.
%
%
\begin{proposition}
	\label{prop:random-prop-1}If the queueing-environment process $Z$
	is ergodic, then the solution $\mathbf{x}:= \left(x\left(z\right):z\in E\right)$
	of the global balance equations $\mathbf{x}\mathbf{\cdot Q=0}$ fulfils
	for all $n\in\mathbb{N}_{0}$
	\begin{equation}
	\sum_{k\in K_{W}}x(n,k)=\sum_{k\in K_{W}}x(n+1,k)\cdot\frac{\mu(n+1)}{\lambda(n)}\label{eq:BS-PER-GEN-PXnYgreater0(2)-1a}
	\end{equation}
	and consequently, we have a geometrical structure
	inherent in the stationary distribution
	\begin{equation}
	\sum_{k\in K_{W}}x(n,k)=
	\left(\sum_{k\in K_{W}} x(0,k)\right)\cdot
	\prod_{m=1}^{n}\frac{\lambda(m-1)}{\mu(m)},
	\quad n\in\mathbb{N}_{0}.\label{eq:BS-PER-GEN-PXnYgreater0(1)-1a}
	\end{equation}
\end{proposition}
%
\proof
By ergodicity  there is a unique
strictly positive stationary probability distribution
$\mathbf{x}:= \left(x\left(z\right):z\in E\right)$  as  solution
of the global balance equations $\mathbf{x}\mathbf{\cdot Q=0}$, see e.g.~\cite[Theorem 4.2,  p. 51]{asmussen:03}.
We apply the cut-criterion to $\mathbf{x}$ \cite[Lemma 1.4]{kelly:79}. This criterion states that for the stationary distribution $\mathbf{x}$ and complementary sets $A, A^c\subset E$   
the probability flows between these sets balance. 
We apply the criterion to 
\[
A:=\Big\{(m,k):m\in\{0,1,\ldots,n\},\: k\in K\Big\}
~\text{and}~
A^c=	\left\{ (\widetilde{m},\widetilde{k}):\widetilde{m}\in\mathbb{N}_{0}\setminus\{0,1,\ldots,n\},\:\widetilde{k}\in K\right\} ,~~ n\in\mathbb{N}_{0}.
\]
Balancing the probability flows between these sets yields
\[
\sum_{m=0}^{n}\sum_{k\in K}\ \sum_{\widetilde{m}=n+1}^{\infty}\sum_{\widetilde{k}\in K}x(m,k)\cdot q((m,k);(\widetilde{m},\widetilde{k}))
=\sum_{\widetilde{m}=n+1}^{\infty}\sum_{\widetilde{k}\in K}\ \sum_{m=0}^{n}\sum_{k\in K}x(\widetilde{m},\widetilde{k})\cdot q((\widetilde{m},\widetilde{k});(m,k)),
\]
which simplifies to
\begin{align*}
& \sum_{k\in K_{W}}x(n,k)\cdot\lambda(n)=
\sum_{\widetilde{k}\in K_{W}}x(n+1,\widetilde{k})\cdot\mu(n+1)\cdot
\underbrace{\sum_{\ell\in K}r_{n+1}(\widetilde{k},\ell)}_{=1}.
\end{align*}
This is \eqref{eq:BS-PER-GEN-PXnYgreater0(2)-1a}. Equation \eqref{eq:BS-PER-GEN-PXnYgreater0(1)-1a} follows directly. 
\endproof

The next result shows that it is impossible to stabilize a non-ergodic (isolated) queue by embedding it into a suitably constructed environment. 
Additionally, the result demonstrates that in an ergodic queueing-environment process the up and down rates for the queueing component constitute necessarily an ergodic birth-death process.
%
\begin{proposition}
	\label{prop:random-abh-notw}If the queueing-environment process $Z$
	is ergodic, it holds
	\begin{equation}\label{eq:birth-death-constant-finite}
	\sum_{n=0}^{\infty}\prod_{m=1}^{n}\frac{\lambda(m-1)}{\mu(m)}<\infty.
	\end{equation}
\end{proposition}
%
\proof
Ergodicity implies that any solution $\mathbf{x}:= \left(x\left(z\right):z\in E\right)$
of the global balance equations 
fulfils $\sum_{n=0}^{\infty}\sum_{k\in K}x(n,k)<\infty.$
It holds
\begin{align*}
\sum_{n=0}^{\infty}\sum_{k\in K}x(n,k) &{=}\sum_{n=0}^{\infty}\sum_{k\in K_{W}}x(n,k)+\sum_{n=0}^{\infty}\sum_{k\in K_{B}}x(n,k)\\
&{=}\sum_{n=0}^{\infty}\underbrace{\sum_{k\in K_{W}}x(0,k)}_{=:\widetilde{W}}\cdot\prod_{m=1}^{n}\frac{\lambda(m-1)}{\mu(m)}+\sum_{n=0}^{\infty}\sum_{k\in K_{B}}x(n,k)\\
&{=}\widetilde{W}\cdot\sum_{n=0}^{\infty}\prod_{m=1}^{n}\frac{\lambda(m-1)}{\mu(m)}+\sum_{n=0}^{\infty}\sum_{k\in K_{B}}x(n,k).
\end{align*}
By ergodicity, it holds $\widetilde{W}\in\left(0,\infty\right)$ and
$\sum_{n=0}^{\infty}\sum_{k\in K_{B}}x(n,k)<\infty$.
Hence, $\sum_{n=0}^{\infty}\sum_{k\in K}x(n,k)<\infty$ implies
$\sum_{n=0}^{\infty}\prod_{m=1}^{n}\frac{\lambda(m-1)}{\mu(m)}<\infty$.
\endproof

\subsection{Ergodicity via Lyapunov functions}\label{sect:ergodicity-Lyapunov}
We follow a standard approach
constructing Lyapunov functions to apply Foster-Lyapunov criterion, see Proposition \ref{prop:KellyYudovina14}.
\begin{assumption}\label{ass:K-finite}
	Henceforth we assume  that the environment space $K$ is finite. 
\end{assumption}
We start with three preparatory lemmas. The proof of the first one is by direct computation.
%
%
\begin{lemma}
	\label{lem:BS-PER-GEN-Lemma1}Consider an $M/M/1/\infty$-queue with queue-length-dependent
	arrival rates $\lambda(n)>0$ and service rates $\mu(n)>0$.
	If
	$\widetilde{\mathcal{L}}:\mathbb{N}_{0}\rightarrow\mathbb{R}_{0}^{+}$ is a Lyapunov function for the queue length process  with finite exception set $\widetilde{F}$ and constant $\widetilde{\varepsilon}>0$ which satisfies the Foster-Lyapunov stability criterion from Proposition \ref{prop:KellyYudovina14}, the following inequalities are satisfied:
	\begin{align}
	& \lambda(0)\cdot\left(\widetilde{\mathcal{L}}(1)-\widetilde{\mathcal{L}}(0)\right)\leq-\widetilde{\varepsilon}, &  & 0\notin\widetilde{F},\label{eq:random-mm1-eq1}\\
	& \lambda(n)\cdot\left(\widetilde{\mathcal{L}}(n+1)-\widetilde{\mathcal{L}}(n)\right)+\mu(n)\cdot\left(\widetilde{\mathcal{L}}(n-1)-\widetilde{\mathcal{L}}(n)\right)\leq -\widetilde\varepsilon, &  & n\notin\widetilde{F},\ n>0.\label{eq:random-mm1-eq2}
	\end{align}
\end{lemma}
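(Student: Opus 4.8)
The plan is to unwind the definition of the Foster--Lyapunov stability criterion (Foster's criterion, \cite[Proposition D.3]{kelly;yudovina:14}) applied to the $M/M/1/\infty$ queue with queue-length-dependent rates, and to read off the asserted inequalities directly from the drift condition. Recall that a Lyapunov function $\widetilde{\mathcal L}:\mathbb N_0\to\mathbb R_0^+$ satisfies the criterion for the queue-length process $X$ with generator $\widetilde{\mathbf Q}_X$ if there is a finite set $\widetilde F\subseteq\mathbb N_0$ and a constant $\widetilde\varepsilon>0$ such that $(\widetilde{\mathbf Q}_X\widetilde{\mathcal L})(n)\le-\widetilde\varepsilon$ for all $n\notin\widetilde F$ (and the drift is bounded on $\widetilde F$). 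So the core of the argument is simply to write out $(\widetilde{\mathbf Q}_X\widetilde{\mathcal L})(n)=\sum_{m}\widetilde q_X(n,m)\bigl(\widetilde{\mathcal L}(m)-\widetilde{\mathcal L}(n)\bigr)$ for the birth--death generator and observe that it equals the left-hand sides of \eqref{eq:random-mm1-eq1} and \eqref{eq:random-mm1-eq2}.

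Concretely, I would proceed as follows. First, record the generator of the isolated queue: $\widetilde q_X(n,n+1)=\lambda(n)$ for $n\ge 0$, $\widetilde q_X(n,n-1)=\mu(n)$ for $n\ge 1$, $\widetilde q_X(n,n)=-(\lambda(n)+\mu(n)1_{\{n>0\}})$, and no other off-diagonal entries. Second, compute the drift at a state $n$. For $n=0$ only the upward transition is possible, so $(\widetilde{\mathbf Q}_X\widetilde{\mathcal L})(0)=\lambda(0)\bigl(\widetilde{\mathcal L}(1)-\widetilde{\mathcal L}(0)\bigr)$; if $0\notin\widetilde F$, the Foster condition gives $(\widetilde{\mathbf Q}_X\widetilde{\mathcal L})(0)\le-\widetilde\varepsilon$, which is exactly \eqref{eq:random-mm1-eq1}. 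For $n>0$ both transitions are present, so $(\widetilde{\mathbf Q}_X\widetilde{\mathcal L})(n)=\lambda(n)\bigl(\widetilde{\mathcal L}(n+1)-\widetilde{\mathcal L}(n)\bigr)+\mu(n)\bigl(\widetilde{\mathcal L}(n-1)-\widetilde{\mathcal L}(n)\bigr)$; if $n\notin\widetilde F$, the Foster condition yields $(\widetilde{\mathbf Q}_X\widetilde{\mathcal L})(n)\le-\widetilde\varepsilon$, i.e.\ \eqref{eq:random-mm1-eq2}. (I would note in passing that the $\varepsilon$ in \eqref{eq:random-mm1-eq2} should read $\widetilde\varepsilon$; it is the same constant.)

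This is essentially a bookkeeping lemma, so there is no serious obstacle; as the paper says, it is ``by direct evaluation.'' The only point needing a word of care is to make explicit that one is applying Foster's criterion to the \emph{isolated} queue length process $X$ (which, since $\mu(0)=0$ and the rates are state-dependent but otherwise standard, is a regular birth--death chain on $\mathbb N_0$), rather than to the joint process $Z$; the lemma is a statement purely about the hypothesis that such a $\widetilde{\mathcal L}$ exists, and the conclusion is a reformulation of that hypothesis into a pair of pointwise inequalities that will be convenient to combine with environment-side estimates in the subsequent lemmas and in Theorem \ref{thm:BS-PER-GER-pos-rec}. I would therefore keep the proof to two or three lines: state the generator action, and invoke the definition of the criterion at states outside $\widetilde F$.
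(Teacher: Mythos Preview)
Your proposal is correct and follows exactly the approach the paper indicates: the paper says the proof ``is by direct evaluation,'' which is precisely what you do by writing out the birth--death generator action $(\widetilde{\mathbf Q}_X\widetilde{\mathcal L})(n)$ and invoking the Foster--Lyapunov drift condition at states $n\notin\widetilde F$. Your observation that the $\varepsilon$ in \eqref{eq:random-mm1-eq2} should read $\widetilde\varepsilon$ is also correct.
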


For the Markovian queueing-environment process $Z=(X,Y)$ of Section \ref{sect:model} with generator 
$\mathbf{Q}=({q}(z,\tilde z):z, \tilde z\in E)$ from \eqref{generator1} we define for every 
$n\in \mathbb{N}_{0}$ an artificial Markov process
$Y^{(n)}:= (Y^{(n)}(t):t\geq 0)$ on $K$ with generator $V_n=(v_n(k,\ell):k,\ell\in K)$. 
The processes $Y^{(n)}$ are in general neither irreducible, nor recurrent.
\label{page:YnDef}

By definition the stochastic behaviour of $Y^{(n)}$ when started in $Y^{(n)}(0)=  k\in K_B$ and observed  until the first entrance into $K_W$ 
is identical to the behaviour of  the $Y$-component of  $Z= (X,Y)$ on $\{n\}\times K$ when started in
$(X(0),Y(0))= (n,k)\in E$ and observed until the first entrance into $\{n\}\times K_W$. Especially, until this first entrance of $(X,Y)$ into $\{n\}\times K_W$ the first coordinate of $(X,Y)$ is constant $n$.

Denote by $T_n$ the first-entrance time of $Y^{(n)}$ into $K_W$, which is a $[0,\infty]$-valued random variable.
The function  $\tau_{n}:K\rightarrow \mathbb{R}_{0}^{+}$ with  $\tau_{n}(k):= \mathbb{E}[T_n|Y^{(n)}(0)=k]$
for $n\in\mathbb{N}_{0}$
is the mean first-entrance time of $Y^{(n)}$ into $K_{W}$ when starting in $k\in K$ (conditional mean absorption time in $K_W$).
For $\ell\in K_{W}$ we have $\tau_{n}(\ell)= 0$, indicating that absorption has already happened.

%
%
\begin{lemma}\label{lem:BS-PER-GEN-Lemma2}
	For all $n\in \mathbb N_0$ it holds $0< \tau_{n}(k) < \infty$ for $k\in K_{B}$ and we have a set of first-entrance equations
	\[
	\tau_{n}(k)=\frac{1}{-v_{n}(k,k)}+\sum_{\ell\in K_{B}\setminus\{k\}}\frac{v_{n}(k,\ell)}{-v_{n}(k,k)}\cdot\tau_{n}(\ell),
	\]
	which are equivalent to
	\begin{equation}
	-1=\sum_{\ell\in K_{B}\setminus\{k\}}v_{n}(k,\ell)\cdot\left(\tau_{n}(\ell)-\tau_{n}(k)\right)-\sum_{\ell\in K_{W}}v_{n}(k,\ell)\cdot\tau_{n}(k),\qquad n\in\mathbb{N}_{0}.\label{eq:BS-PER-GEN-eins}
	\end{equation}
\end{lemma}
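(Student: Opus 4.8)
The plan is to establish the three claims in order: first finiteness and positivity of $\tau_n(k)$ for $k\in K_B$, then the first-entrance (first-step) equations, and finally the algebraic reformulation \eqref{eq:BS-PER-GEN-eins}.

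\textbf{Finiteness of $\tau_n$.} Fix $n$ and consider the process $Y^{(n)}$ on $K$ with generator $V_n$. Here I would invoke Assumption~\ref{ass:K-finite}: $K$ is finite, hence $K_B$ is finite. The key point is that, because $Z=(X,Y)$ is irreducible on $E$ (Assumption~\ref{ass:irreducible}), from any $(n,k)$ with $k\in K_B$ the joint process must be able to leave $\{n\}\times K_B$; since the only transitions out of a state $(n,k)$ with $k\in K_B$ are the environment jumps governed by $v_n(k,\cdot)$ (no arrivals, no services occur while the server is blocked), irreducibility of $Z$ forces the restriction of $V_n$ to $K_B$ to be such that $K_W$ is reachable from every $k\in K_B$ using only $V_n$-transitions. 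Thus $K_W$ acts as an absorbing set that is reached with probability one from every $k\in K_B$, and since $K_B$ is a finite transient-type class for the absorbed chain, the expected absorption time $\tau_n(k)$ is finite; it is strictly positive because at least one exponential holding time (with finite rate $-v_n(k,k)<\infty$, using regularity) must elapse before absorption. I should also note $\tau_n(\ell)=0$ for $\ell\in K_W$ by definition, so all quantities appearing below are well-defined reals.

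\textbf{First-step equations.} This is the standard first-step (Dynkin / first-transition) analysis for mean hitting times of an absorbing set. Starting from $k\in K_B$, the process waits an $\mathrm{Exp}(-v_n(k,k))$ time with mean $1/(-v_n(k,k))$, then jumps to $\ell$ with probability $v_n(k,\ell)/(-v_n(k,k))$; if $\ell\in K_W$ absorption is complete and contributes $\tau_n(\ell)=0$, while if $\ell\in K_B\setminus\{k\}$ the remaining expected time is $\tau_n(\ell)$. Conditioning on the first jump and using the strong Markov property (guaranteed for all processes in the paper) gives exactly
\[
\tau_{n}(k)=\frac{1}{-v_{n}(k,k)}+\sum_{\ell\in K_{B}\setminus\{k\}}\frac{v_{n}(k,\ell)}{-v_{n}(k,k)}\cdot\tau_{n}(\ell).
\]
Finiteness from the previous step justifies this manipulation (no $\infty-\infty$ issues).

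\textbf{Algebraic reformulation.} Multiply the first-step equation through by $-v_n(k,k)$ to obtain $-v_n(k,k)\,\tau_n(k) = 1 + \sum_{\ell\in K_B\setminus\{k\}} v_n(k,\ell)\,\tau_n(\ell)$. Now use that $V_n$ is a conservative generator: $-v_n(k,k) = \sum_{\ell\in K\setminus\{k\}} v_n(k,\ell) = \sum_{\ell\in K_B\setminus\{k\}} v_n(k,\ell) + \sum_{\ell\in K_W} v_n(k,\ell)$. Substituting this for $-v_n(k,k)$ on the left, moving everything to one side, and grouping the $\sum_{\ell\in K_B\setminus\{k\}}v_n(k,\ell)\tau_n(k)$ terms against the $\sum_{\ell\in K_B\setminus\{k\}}v_n(k,\ell)\tau_n(\ell)$ terms yields
\[
-1=\sum_{\ell\in K_{B}\setminus\{k\}}v_{n}(k,\ell)\cdot\left(\tau_{n}(\ell)-\tau_{n}(k)\right)-\sum_{\ell\in K_{W}}v_{n}(k,\ell)\cdot\tau_{n}(k),
\]
which is \eqref{eq:BS-PER-GEN-eins}. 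This last part is routine bookkeeping. The only genuine obstacle is the first step: one must argue carefully that irreducibility of the \emph{joint} process $Z$ (not of $V_n$, which the Remark after Assumption~\ref{ass:irreducible} warns need not be irreducible) nonetheless forces $K_W$ to be reachable from every $k\in K_B$ within the single level $\{n\}\times K$ using only $V_n$-transitions — this is exactly what makes $\tau_n(k)$ finite, and it relies on the model feature that while $Y\in K_B$ the queue length is frozen so the only way for $Z$ to escape $\{n\}\times K_B$ is via the $v_n$-dynamics.
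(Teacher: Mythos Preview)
Your proof is correct and follows essentially the same route as the paper: positivity from regularity of holding times, finiteness from the fact that (since the queue is frozen while $Y\in K_B$) irreducibility of $Z$ forces $K_W$ to be $V_n$-reachable from every $k\in K_B$ so that $Y^{(n)}$ is a finite-state chain absorbed in $K_W$, then the standard first-step decomposition and the conservativeness of $V_n$ for the algebraic rewrite. Your argument for the reachability claim is in fact spelled out more carefully than in the paper, which simply asserts it.
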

%
\proof
Positivity of $\tau_{n}$ on $K_B$ is due to the regularity of the $Y^{(n)}$, which follows from regularity of $(X,Y)$. Irreducibility of $(X,Y)$ implies that for any $(n,k)\in \{n\}\times K_B$ there exists some
state $(n,\ell)\in \{n\}\times K_W$ which can be reached in a finite number of jumps. This implies that  $Y^{(n)}$
until absorption in $K_W$	 can be considered as a finite state process with attached single absorbing state $a$. Consequently, time to absorption  of $Y^{(n)}$ in $a$ has  finite mean for any initial state.

The set $\left(\tau_{n}(k):k\in K_{B}\right)$ satisfies the following
set of first-entrance equations:
\begin{align}
\tau_{n}(k) & \overset{\phantom{(\star)}}{=}\sum_{\ell\in K_{B}\setminus\{k\}}\frac{v_{n}(k,\ell)}{-v_{n}(k,k)}\cdot\left(\frac{1}{-v_{n}(k,k)}
+\tau_{n}(\ell)\right) +\sum_{\ell\in K_{W}}\frac{v_{n}(k,\ell)}{-v_{n}(k,k)}\cdot\left(\frac{1}{-v_{n}(k,k)}+\tau_{n}(\ell)\right)\nonumber \\
& \overset{\phantom{(\star)}}{=}\sum_{\ell\in K\setminus\{k\}}\frac{v_{n}(k,\ell)}{-v_{n}(k,k)}\cdot\frac{1}{-v_{n}(k,k)}\nonumber
+\sum_{\ell\in K_{B}\setminus\{k\}}\frac{v_{n}(k,\ell)}{-v_{n}(k,k)}\cdot\tau_{n}(\ell)+\sum_{\ell\in K_{W}}\frac{v_{n}(k,\ell)}{-v_{n}(k,k)}\cdot\underbrace{\tau_{n}(\ell)}_{=0}\nonumber \\
& \overset{(\star)}{=}\frac{1}{-v_{n}(k,k)}+\sum_{\ell\in K_{B}\setminus\{k\}}\frac{v_{n}(k,\ell)}{-v_{n}(k,k)}\cdot\tau_{n}(\ell),\qquad\qquad\qquad n\in\mathbb{N}_{0}.\label{eq:rand-lemma-taunk}
\end{align}
$(\star)$ holds because $V_{n}=\left(v_{n}(k,\ell):k,\ell\in K\right)$ is conservative.
Equation \eqref{eq:rand-lemma-taunk} is equivalent to
\begin{align*}
-1 & =\left(\sum_{\ell\in K_{B}\setminus\{k\}}v_{n}(k,\ell)\cdot\tau_{n}(\ell)\right)+v_{n}(k,k)\cdot\tau_{n}(k)
=\sum_{\ell\in K_{B}\setminus\{k\}}v_{n}(k,\ell)\cdot\tau_{n}(\ell)-\sum_{\ell\in K\setminus\{k\}}v_{n}(k,\ell)\cdot\tau_{n}(k)\\
& =\sum_{\ell\in K_{B}\setminus\{k\}}v_{n}(k,\ell)\cdot\left(\tau_{n}(\ell)-\tau_{n}(k)\right)-\sum_{\ell\in K_{W}}v_{n}(k,\ell)\cdot\tau_{n}(k),\qquad n\in\mathbb{N}_{0}.\qquad\qquad\qquad
\end{align*} 
\endproof
%

\begin{lemma}
	{\label{lem:random-cn}}We define for $n\in\mathbb{N}_{0}$
	\begin{align}
	{c}_{n} & := \min\left\{ \frac{1}{\max_{k\in K_{W}}\left\{ \mu(n+1)\cdot\sum_{\ell\in K_{B}}r_{n+1}(k,\ell)\cdot\tau_{n}(\ell)\right\} }, \:\frac{1}{\max_{k\in K_{W}}\left\{ \sum_{\ell\in K_{B}}v_{n}(k,\ell)\cdot\tau_{n}(\ell)\right\} }\right\} .\label{eq:BS-PER-GEN-cn}
	\end{align}
	The ${c}_{n}$ are well-defined, i.e.~it holds $0 < {c}_{n}<\infty$ for
	$n\in\mathbb{N}_{0}$.
\end{lemma}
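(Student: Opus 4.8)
The plan is to verify $\widehat c_n>0$ and $\widehat c_n<\infty$ separately. Both reduce to inspecting the data $\mu(n+1)$, $R_{n+1}$, $V_n$, together with the bounds $0<\tau_n(\ell)<\infty$ for $\ell\in K_B$ supplied by Lemma~\ref{lem:BS-PER-GEN-Lemma2}, and Assumptions~\ref{ass:irreducible} and~\ref{ass:K-finite}.

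For $\widehat c_n>0$ I would note that, $K$ being finite by Assumption~\ref{ass:K-finite}, each of the two maxima in \eqref{eq:BS-PER-GEN-cn} is a maximum over the finite set $K_W$ of a finite sum over $\ell\in K_B$ of products of nonnegative, finite quantities: $\mu(n+1)<\infty$, $0\le R_{n+1}(k,\ell)\le 1$, $0\le v_n(k,\ell)<\infty$ (finite because $Z$ is regular, so the conservative rows of $V_n$ have finite entries), and $0\le\tau_n(\ell)<\infty$. Hence both maxima lie in $[0,\infty)$, their reciprocals lie in $(0,\infty]$, and therefore $\widehat c_n\in(0,\infty]$; in particular $\widehat c_n>0$.

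For $\widehat c_n<\infty$ it suffices that at least one of the two maxima is strictly positive, and since $\mu(n+1)>0$ and $\tau_n(\ell)>0$ for all $\ell\in K_B$, this amounts to showing that $R_{n+1}(k,\ell)>0$ or $v_n(k,\ell)>0$ for some $k\in K_W$, $\ell\in K_B$. I would prove this by contradiction via irreducibility. Assume $R_{n+1}(k,\ell)=0$ and $v_n(k,\ell)=0$ for all $k\in K_W$, $\ell\in K_B$, and set $S\coloneqq E\setminus(\{n\}\times K_B)$. Checking the three transition types of $\mathbf{Q}$ from \eqref{generator1}: an arrival $(n-1,k)\to(n,k)$ requires $k\in K_W$, so it stays in $S$; a service completion into level $n$, $(n+1,k)\to(n,\ell)$, has positive rate only for $k\in K_W$, where $R_{n+1}(k,\ell)=0$ on $K_B$; a continuous transition out of a state $(n,k)\in S$ has $k\in K_W$ and rate $v_n(k,\ell)=0$ into $K_B$. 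Hence $S$ is absorbing, so $\{n\}\times K_B$ (nonempty since $K_B\ne\emptyset$) is unreachable from any state of $S$; as $K_W\ne\emptyset$ — otherwise the queue length never changes and $Z$ cannot be irreducible on $\mathbb{N}_0\times K$ — such states exist, contradicting Assumption~\ref{ass:irreducible}. This yields $\widehat c_n<\infty$.

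The main obstacle is this last step: recasting ``some maximum is positive'' as a reachability statement and drawing the contradiction through the case check on $\mathbf{Q}$. I would also remark that the genuinely degenerate case $K_B=\emptyset$ (no blocking), where both maxima are empty sums so that $\widehat c_n=\infty$, lies outside the model considered here, since the stalling mechanism of the environment presupposes $K_B\ne\emptyset$.
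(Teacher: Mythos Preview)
Your proof is correct and follows essentially the same approach as the paper: both reduce $\widehat c_n>0$ to the finiteness of $\tau_n$ from Lemma~\ref{lem:BS-PER-GEN-Lemma2}, and both reduce $\widehat c_n<\infty$ to the existence of some $k\in K_W$, $\ell\in K_B$ with $R_{n+1}(k,\ell)>0$ or $v_n(k,\ell)>0$, established via irreducibility. The only cosmetic difference is that the paper traces a minimal path into a fixed $(n,\ell)\in\{n\}\times K_B$ and locates the first $K_W\to K_B$ step along it, whereas you wrap the identical case analysis of the transitions in \eqref{generator1} into the observation that $S=E\setminus(\{n\}\times K_B)$ would otherwise be absorbing.
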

\proof \label{rem:cn-kleiner}
\textbf{(i)} In \prettyref{lem:BS-PER-GEN-Lemma2} we have shown that the mean
first-entrance times $\tau_{n}(k), k\in K_{B},$ are positive and finite. All  other quantities which occur are 
positive and finite by definition. So ${c}_{n}>0$.\\
\textbf{(ii)} 
From $\mu(n+1)>0$ and $\tau_{n}(\ell)>0, \ell\in K_{B},$ follows
\begin{equation}\label{eq:cnLess}
{c}_{n}<\infty \quad\Leftrightarrow\quad\exists\ k\in K_{W}, \ell\in K_{B}: r_{n+1}(k,\ell)\neq 0 \vee v_{n}(k,\ell)\neq0.	
\end{equation}	
Take any pairs $(\widetilde{n},k)\in \mathbb{N}_0\times K_{W}$ and $(n,\ell)\in \mathbb{N}_0\times K_{B}$. By irreducibility of $Z$ there exists a finite path (sequence of jumps with positive probability) from $(\widetilde{n},k)$ to $(n,\ell)$. 
Without loss of generality we can assume that $(n,\ell)$ is the first state of that path which is in $\mathbb{N}_0\times K_{B}$.

Since $\ell\in K_B$, we note that for any $m\in\mathbb{N}_0$  in $(m,\ell)$ arrival and service processes are stalled and that arrivals in state  $(n-1,k)$ cannot trigger a change of the environment to $\ell$. So the only possible transitions out of environment state $k$ which lead to $(n,\ell)$ are of the form
\begin{align}
&\mathbb{N}\times K_W\ni (n+1,k)\stackrel{\mu(n+1)\cdot r_{n+1}(k,\ell)}{\longrightarrow}(n,\ell)\in \mathbb{N}_0\times K_B,\label{eq:trans1}\\
&\mathbb{N}_0\times K_W\ni (n,k)
\stackrel{v_{n}(k,\ell)}{\longrightarrow}(n,\ell)\in \mathbb{N}_0\times K_B.\label{eq:trans2}
\end{align}
Consequently, either $r_{n+1}(k,\ell)$ in \eqref{eq:trans1} or 
$v_{n}(k,\ell)$ in \eqref{eq:trans2} must be strictly positive
to terminate the path from $(\widetilde{n},k)$ to $(n,\ell)$.

\endproof

\begin{theorem}\label{thm:BS-PER-GER-pos-rec}
	Consider the queueing-environment process $Z$ with finite environment set $K$. Assume that 
	\[
	\widetilde{\mathcal{L}}:\mathbb{N}_{0}\rightarrow\mathbb{R}_{0}^{+}
	\]
	is a Lyapunov function with  finite exception set $\widetilde{F}$ and constant $\widetilde{\varepsilon}>0$ for the $M/M/1/\infty$-queue with queue-length-dependent arrival rates $\lambda(n)\in (0,\infty)$ and service rates $\mu(n)\in (0,\infty)$.
	So, the queue length process in isolation of the system is ergodic.
	Assume further that 
	\[
	\inf_{n\in\mathbb{N}_{0}}{c}_{n}>0, 
	\]
	holds, where ${c}_{n}$ is defined in  Lemma \ref{lem:random-cn}.
	Then 
	\begin{equation}
	\mathcal{L}:E\rightarrow\mathbb{R}_{0}^{+}~~\text{with}~~ \mathcal{L}(n,k):= \widetilde{\mathcal{L}}(n)+1_{\left\{ k\in K_{B}\right\} }\cdot c_{n}\cdot\frac{\widetilde{\varepsilon}}{4}\cdot\tau_{n}(k) \nonumber 
	\end{equation}
	is a Lyapunov function for $Z$ with
	finite exception set $F:= \widetilde{F}\times K$
	and constant
	\[
	\varepsilon:= \min\left(\frac{\widetilde{\varepsilon}}{2}\ ,\ \frac{\widetilde{\varepsilon}}{4}\cdot\inf_{n\in\mathbb{N}_{0}} c_{n}\right)>0
	\]
	and $Z$ is  ergodic.
\end{theorem}
%
\proof
We apply Proposition \ref{prop:KellyYudovina14} and show that $\mathcal{L}$ is a Lyapunov function for $Z$ with finite exception set $F$ and constant $\varepsilon$.

First, because  the jumps of $X$ are of height $1$, and because $|K|<\infty$, to check $\left(\mathbf{Q\cdot\mathcal{L}}\right)\left(n,k\right)<\infty$
for $(n,k)\in F=\widetilde{F}\times K$ is by direct computation.

Secondly, we will check $\left(\mathbf{Q\cdot\mathcal{L}}\right)\left(n,k\right)\leq-\varepsilon$ for $(n,k)\notin F=\widetilde{F}\times K$:\\
$\blacktriangleright$ For $k\in K_{B}$ and $n\notin \widetilde{F}$, $n\geq0$, it holds
\begin{align*}
& \left(\mathbf{Q\cdot\mathcal{L}}\right)\left(n,k\right)
=\sum_{\ell\in K\setminus\{k\}}v_{n}(k,\ell)\cdot\left(\mathcal{L}(n,\ell)-\mathcal{L}(n,k)\right)\\
& =\sum_{\ell\in K\setminus\{k\}}v_{n}(k,\ell)\cdot\left(\left(\widetilde{\mathcal{L}}(n)+1_{\left\{ \ell\in K_{B}\right\} }\cdot c_{n}\cdot\frac{\widetilde{\varepsilon}}{4}\cdot\tau_{n}(\ell)\right)-\left(\widetilde{\mathcal{L}}(n)+1_{\left\{ k\in K_{B}\right\} }\cdot c_{n}\cdot\frac{\widetilde{\varepsilon}}{4}\cdot\tau_{n}(k)\right)\right)\\
& =\sum_{\ell\in K\setminus\{k\}}v_{n}(k,\ell)\cdot1_{\left\{ \ell\in K_{B}\right\} }\cdot c_{n}\cdot\frac{\widetilde{\varepsilon}}{4}\cdot\tau_{n}(\ell)-\sum_{\ell\in K\setminus\{k\}}v_{n}(k,\ell)\cdot\underbrace{1_{\left\{ k\in K_{B}\right\} }}_{=1}c_{n}\cdot\frac{\widetilde{\varepsilon}}{4}\cdot\tau_{n}(k)\\
& =\sum_{\ell\in K_{B}\setminus\{k\}}v_{n}(k,\ell)\cdot c_{n}\cdot\frac{\widetilde{\varepsilon}}{4}\cdot\tau_{n}(\ell)-\sum_{\ell\in K\setminus\{k\}}v_{n}(k,\ell)\cdot c_{n}\cdot\frac{\widetilde{\varepsilon}}{4}\cdot\tau_{n}(k)\\
& =\sum_{\ell\in K_{B}\setminus\{k\}}v_{n}(k,\ell)\cdot\left(c_{n}\cdot\frac{\widetilde{\varepsilon}}{4}\cdot\tau_{n}(\ell)-c_{n}\cdot\frac{\widetilde{\varepsilon}}{4}\cdot\tau_{n}(k)\right)-\sum_{\ell\in K_{W}}v_{n}(k,\ell)
\cdot\frac{\widetilde{\varepsilon}}{4}\cdot\tau_{n}(k)\\
& =c_{n}\cdot\frac{\widetilde{\varepsilon}}{4}\cdot\underbrace{\Bigg[\sum_{\ell\in K_{B}\setminus\{k\}}v_{n}(k,\ell)\cdot\left(\tau_{n}(\ell)-\tau_{n}(k)\right)-\sum_{\ell\in    K_{W}}v_{n}(k,\ell)\cdot\tau_{n}(k)\Bigg]}_{{=}-1, {~~\text{by}~ \eqref{eq:BS-PER-GEN-eins}}}\\
& =-c_{n}\cdot\frac{\widetilde{\varepsilon}}{4}\leq -\min\left(\frac{\widetilde{\varepsilon}}{2}\ ,\ \frac{\widetilde{\varepsilon}}{4} \cdot \inf_{m\in\mathbb{N}_{0}} c_{m}\right)
=-\varepsilon.\end{align*}
$\blacktriangleright$ For $k\in K_{W}$ and $n=0\notin \widetilde{F}$ it holds
\begin{align*}
& \phantom{\leq}\left(\mathbf{Q\cdot\mathcal{L}}\right)\left(n,k\right)
=\lambda(0)\cdot\left(\mathcal{L}(1,k)-\mathcal{L}(0,k)\right)
+\sum_{\ell\in K\setminus\{k\}}v_{0}(k,\ell)
\cdot\left(\mathcal{L}(0,\ell)-\mathcal{L}(0,k)\right)\\
& =\lambda(0)\cdot\Bigg(\Bigg(\widetilde{\mathcal{L}}(1)+\underbrace{1_{\left\{ k\in K_{B}\right\} }}_{=0}\cdot c_{1}\cdot\frac{\widetilde{\varepsilon}}{4}\cdot\tau_{1}(k)\Bigg)-\Bigg(\widetilde{\mathcal{L}}(0)+\underbrace{1_{\left\{ k\in K_{B}\right\} }}_{=0}\cdot c_{0}\cdot\frac{\widetilde{\varepsilon}}{4}\cdot\tau_{0}(k)\Bigg)\Bigg)\\
& \phantom{\leq}+\sum_{\ell\in K\setminus\{k\}}v_{0}(k,\ell)\cdot\Bigg(\Bigg(\widetilde{\mathcal{L}}(0)+1_{\left\{ \ell\in K_{B}\right\} }\cdot c_{0}\cdot\frac{\widetilde{\varepsilon}}{4}\cdot\tau_{0}(\ell)\Bigg)-\Bigg(\widetilde{\mathcal{L}}(0)+\underbrace{1_{\left\{ k\in K_{B}\right\} }}_{=0}\cdot c_{0}\cdot\frac{\widetilde{\varepsilon}}{4}\cdot\tau_{0}(k)\Bigg)\Bigg)\\
& =\lambda(0)\cdot\left(\widetilde{\mathcal{L}}(1)-\widetilde{\mathcal{L}}(0)\right)+\sum_{\ell\in K\setminus\{k\}}v_{0}(k,\ell)\cdot 1_{\left\{ \ell\in K_{B}\right\} }\cdot c_{0}\cdot\frac{\widetilde{\varepsilon}}{4}\cdot\tau_{0}(\ell)\\
& =\underbrace{\lambda(0)\cdot\left(\widetilde{\mathcal{L}}(1)-
	\widetilde{\mathcal{L}}(0)\right)}_{{\leq}-\widetilde{\varepsilon}~~{(\diamondsuit)}}+\sum_{\ell\in K_{B}}v_{0}(k,\ell)\cdot c_{0}\cdot\frac{\widetilde{\varepsilon}}{4}\cdot\tau_{0}(\ell)\\
& \leq-\widetilde{\varepsilon}+\underbrace{\sum_{\ell\in K_{B}}v_{0}(k,\ell)\cdot c_{0}\cdot\frac{\widetilde{\varepsilon}}{4}\cdot\tau_{0}(\ell)}_{\leq\frac{\widetilde{\varepsilon}}{4},~\text{by } \eqref{eq:BS-PER-GEN-cn}}\leq-\frac{3}{4}\cdot\widetilde{\varepsilon}\leq-\min\left(\frac{\widetilde{\varepsilon}}{2}\ ,\ \frac{\widetilde{\varepsilon}}{4} \cdot \inf_{m\in\mathbb{N}_{0}} c_{m}\right) = -\varepsilon.
\end{align*}
$(\diamondsuit)$ holds because of \eqref{eq:random-mm1-eq1}
since $\widetilde{\mathcal{L}}:E\rightarrow\mathbb{R}_{0}^{+}$ is
a Lyapunov function for the $M/M/1/\infty$-queue with queue-length-dependent
arrival and service rates with constant $\widetilde{\varepsilon}$.\\
$\blacktriangleright$ For $k\in K_{W}$ and $n\notin \widetilde{F}$, $n>0$, it holds
\begin{align*}
& \phantom{\leq}\left(\mathbf{Q\cdot\mathcal{L}}\right)\left(n,k\right)
=\lambda(n)\cdot\left(\mathcal{L}(n+1,k)-\mathcal{L}(n,k)\right)\\
& \phantom{\leq}+\sum_{\ell\in K}\mu(n)\cdot r_{n}(k,\ell)\cdot\left(\mathcal{L}(n-1,\ell)-\mathcal{L}(n,k)\right)
+\sum_{\ell\in K\setminus\{k\}}v_{n}(k,\ell)\cdot\left(\mathcal{L}(n,\ell)-\mathcal{L}(n,k)\right)\\
& =\lambda(n)\cdot\Bigg(\Bigg(\widetilde{\mathcal{L}}(n+1)+\underbrace{1_{\left\{ k\in K_{B}\right\} }}_{=0}\cdot c_{n+1}\cdot\frac{\widetilde{\varepsilon}}{4}\cdot\tau_{n+1}(k)\Bigg)-\Bigg(\widetilde{\mathcal{L}}(n)+\underbrace{1_{\left\{ k\in K_{B}\right\} }}_{=0}\cdot c_{n}\cdot\frac{\widetilde{\varepsilon}}{4}\cdot\tau_{n}(k)\Bigg)\Bigg)\\
& \phantom{\leq}+\sum_{\ell\in K}\mu(n)\cdot r_{n}(k,\ell)\cdot\Bigg(\Bigg(\widetilde{\mathcal{L}}(n-1)+1_{\left\{ \ell\in K_{B}\right\} }\cdot c_{n-1}\cdot\frac{\widetilde{\varepsilon}}{4}\cdot\tau_{n-1}(\ell)\Bigg)\\
& \hphantom{+\sum_{\ell\in K}\mu(n)\cdot R_{n}(k,\ell)\cdot\Bigg(\quad}-\Bigg(\widetilde{\mathcal{L}}(n)+\underbrace{1_{\left\{ k\in K_{B}\right\} }}_{=0}\cdot c_{n}\cdot\frac{\widetilde{\varepsilon}}{4}\cdot\tau_{n}(k)\Bigg)\Bigg)\\
& \phantom{\leq}+\sum_{\ell\in K\setminus\{k\}}v_{n}(k,\ell)\cdot\Bigg(\Bigg(\widetilde{\mathcal{L}}(n)+1_{\left\{ \ell\in K_{B}\right\} }\cdot c_{n}\cdot\frac{\widetilde{\varepsilon}}{4}\cdot\tau_{n}(\ell)\Bigg)-\Bigg(\widetilde{\mathcal{L}}(n)+\underbrace{1_{\left\{ k\in K_{B}\right\} }}_{=0}\cdot c_{n}\cdot\frac{\widetilde{\varepsilon}}{4}\cdot\tau_{n}(k)\Bigg)\Bigg)\\
& =\lambda(n)\cdot\left(\widetilde{\mathcal{L}}(n+1)-
\widetilde{\mathcal{L}}(n)\right)+\mu(n)\cdot\underbrace{\sum_{\ell\in K}r_{n}(k,\ell)}_{=1}\cdot\left(\widetilde{\mathcal{L}}(n-1)-\widetilde{\mathcal{L}}(n)\right)\\
& \phantom{\leq}+\mu(n)\cdot\sum_{\ell\in K}r_{n}(k,\ell)\cdot1_{\left\{ \ell\in K_{B}\right\} }\cdot c_{n-1}\cdot\frac{\widetilde{\varepsilon}}{4}\cdot\tau_{n-1}(\ell)+\sum_{\ell\in K\setminus\{k\}}v_{n}(k,\ell)\cdot 1_{\left\{ \ell\in K_{B}\right\} }\cdot c_{n}\cdot\frac{\widetilde{\varepsilon}}{4}\cdot\tau_{n}(\ell)\\
& =\underbrace{\lambda(n)\cdot\left(\widetilde{\mathcal{L}}(n+1)-\widetilde{\mathcal{L}}(n)\right)+\mu(n)
	\cdot\left(\widetilde{\mathcal{L}}(n-1)-
	\widetilde{\mathcal{L}}(n)\right)}_{{\leq}-\widetilde{\varepsilon}~~{(\triangle)}}\\
& \phantom{\leq}+\mu(n)\cdot\sum_{\ell\in K_{B}}r_{n}(k,\ell)\cdot c_{n-1}\cdot\frac{\widetilde{\varepsilon}}{4}\cdot\tau_{n-1}(\ell)+\sum_{\ell\in K_{B}}v_{n}(k,\ell)\cdot c_{n}\cdot\frac{\widetilde{\varepsilon}}{4}\cdot\tau_{n}(\ell)\\
& \leq-\widetilde{\varepsilon}+\underbrace{\mu(n)\cdot\sum_{\ell\in K_{B}}r_{n}(k,\ell)\cdot c_{n-1}\cdot\frac{\widetilde{\varepsilon}}{4}\cdot\tau_{n-1}(\ell)}_{{\leq}
	\frac{\widetilde{\varepsilon}}{4},~\text{by }{\eqref{eq:BS-PER-GEN-cn}}}+\underbrace{\sum_{\ell\in K_{B}}v_{n}(k,\ell)\cdot c_{n}\cdot\frac{\widetilde{\varepsilon}}{4}\cdot\tau_{n}(\ell)}_{{\leq}\frac{\widetilde{\varepsilon}}{4},~\text{by }{\eqref{eq:BS-PER-GEN-cn}}}\\
& \leq-\frac{\widetilde{\varepsilon}}{2}\leq-\min\left(\frac{\widetilde{\varepsilon}}{2}\ ,\ \frac{\widetilde{\varepsilon}}{4} \cdot \inf_{m\in\mathbb{N}_{0}} c_{m}\right) = -\varepsilon.
\end{align*}
$(\triangle)$ holds because of \eqref{eq:random-mm1-eq2} since
$\widetilde{\mathcal{L}}:E\rightarrow\mathbb{R}_{0}^{+}$ is a Lyapunov
function for the $M/M/1/\infty$-queue with queue-length-dependent
arrival and service rates with constant $\widetilde{\varepsilon}$. 
\endproof
%

%
\begin{remark}\label{rem:inf-c-n}
	The positivity condition in Theorem \ref{thm:BS-PER-GER-pos-rec},
	\[
	\inf_{n\in\mathbb{N}_{0}}{c}_{n}>0,
	\]
	says roughly that the mean passage times through $K_B$ are uniformly (over all queue lengths $n\geq 0$) bounded.
	Any such passage through $K_B$, initiated when the environment is on leave  from some $k\in K_W$ by entering some $\ell\in K_B$,
	originates either from a jump movement (service completion which triggers an immediate jump)  $k\to\ell$
	governed by $r_{n+1}(k,\cdot)$, or from a continuous movement 
	from some $k\in K_W$ to some $\ell\in K_B$ 
	driven by $v_{n}(k,\cdot)$.
	In both cases the resulting queue length during the passage is $n$. Rewriting \eqref{eq:BS-PER-GEN-cn} as
	\begin{align*}
	{c}_{n} & =\min\left\{ \frac{1}{\max_{k\in K_{W}}\left\{ \mu(n+1)\cdot\sum_{\ell\in K_{B}}r_{n+1}(k,\ell)\cdot\tau_{n}(\ell)\right\} },\right.\nonumber \\
	& \hphantom{=\left\{ \min\ \ \right.}\left. \frac{1}{\max_{k\in K_{W}}
		\left\{ - v_{n}(k,k)\sum_{\ell\in K_{B}}\frac{v_{n}(k,\ell)}{- v_{n}(k,k)}\cdot\tau_{n}(\ell)\right\} }\right\} ,
	\end{align*}
	we observe that the mean passage times through $K_B$ are in both cases weighted by the entrance probabilities into $K_B$.
	Including the departure rates $ \mu(n+1)$ resp.~$- v_{n}(k,k)$ out of $k\in K_W$, ensures that the queueing-environment system resides in $K_W$ sufficiently long. 
\end{remark}
\begin{remark}\label{rem:LyaFctIndependence}
	The construction of the Lyapunov function with
	\[
	\mathcal{L}(n,k):=
	\widetilde{\mathcal{L}}(n)+1_{\left\{ k\in K_{B}\right\} }\cdot c_{n}\cdot\frac{\widetilde{\varepsilon}}{4}\cdot\tau_{n}(k)
	\]	
	shows that the terms
	$1_{\left\{ k\in K_{B}\right\} }\cdot c_{n}\cdot\frac{\widetilde{\varepsilon}}{4}\cdot\tau_{n}(k)$
	(relevant for the environment) are additively separated from the terms $\widetilde{\mathcal{L}}$ (relevant for the queue). 
	There is only an indirect coupling of the queue and the environment because the $c_n$ are functions which depend on the $\mu(n)$.
	Moreover, the only additional condition $\inf_{n\in \mathbb{N}_0} c_n>0$ also does not explicitly refer to the 
	Lyapunov function $\widetilde{\mathcal{L}}$ of the queue in isolation. 
\end{remark}

\begin{corollary}\label{cor:BS-PER-GER-pos-rec-2}
	Consider the queueing-environment process $Z$ with finite environment set $K$ and queue-length-dependent
	arrival rates $\lambda(n)\in(0,\infty)$ and service rates $\mu(n)\in(0,\infty)$.
	If \begin{equation} \label{eq:birth-death-constant-finite-1}
	\sum_{n=0}^{\infty}\prod_{m=1}^{n}\frac{\lambda(m-1)}{\mu(m)}<\infty
	\end{equation}
	holds, and if with ${c}_{n}$ from  Lemma \ref{lem:random-cn}, it holds
	\[
	\inf_{n\in\mathbb{N}_{0}}{c}_{n}>0,
	\]
	then  $Z$ is ergodic.
\end{corollary}
\proof
The isolated Markovian queue length process of the $M/M/1/\infty$-queue with rates  $\lambda(n), \mu(n)\in(0,\infty)$ is ergodic under \eqref{eq:birth-death-constant-finite-1}. With slightly abusing notation we denote this process by $X$ as well.
A Lyapunov function  for $X$ can be constructed as follows:
Take $\widetilde{F} := \{0\}$ as exception set and for $n\geq 1$
define $\widetilde{\mathcal{L}}(n)$ as the mean first-entrance time of $X$ into $\{0\}$ given $X(0)=n$, and set formally 
$\widetilde{\mathcal{L}}(0):= 0$. For $n\geq 1$ we have the mean first-entrance equations
\[
\widetilde{\mathcal{L}}(n)= \frac{1}{\lambda(n)+\mu(n)} +
\left(\frac{\lambda(n)}{\lambda(n)+\mu(n)}
\widetilde{\mathcal{L}}(n+1)+
\frac{\mu(n)}{\lambda(n)+\mu(n)}
\widetilde{\mathcal{L}}(n-1)\right),
\]
which is 
\begin{equation}\label{eq:LyapunovGuT-1} 
\sum_{m\in\{n-1,n+1\}}q(n;m)\left[\widetilde{\mathcal{L}}(m) -
\widetilde{\mathcal{L}}(n)
\right] = -1, \qquad n \notin \widetilde{F},
\end{equation}
and it holds furthermore for some $b> 0$
\begin{equation}\label{eq:LyapunovGuT-2}
q(n;n+1)\left[\widetilde{\mathcal{L}}(n+1) -
\widetilde{\mathcal{L}}(n)
\right] = \lambda(0) \widetilde{\mathcal{L}}(1) \leq b-1 , \qquad n=0\in \widetilde{F}.
\end{equation}
This says that 
$\widetilde{\mathcal{L}}$ constitutes by \eqref{eq:LyapunovGuT-1}  a Lyapunov function for $X$ in the sense of Proposition \ref{prop:KellyYudovina14}.
Because of \eqref{eq:birth-death-constant-finite-1},
$\widetilde{\mathcal{L}}(n)\in (0,\infty)$  holds for all $n\geq 1$,
see \cite[Corollary of Theorem 2, p. 214]{chung:67}.
We therefore can apply Theorem \ref{thm:BS-PER-GER-pos-rec}
to finish the proof. 
\endproof

\begin{remark}\label{rem:FossEtal}
	It is interesting to compare the result of Corollary \ref{cor:BS-PER-GER-pos-rec-2}, especially the condition
	\eqref{eq:birth-death-constant-finite-1}, with the structure of the results in \cite{foss;shneer;tyurlikov:12}, although the two-dimensional Markov processes are discrete time models on a general state space. For simplicity we concentrate on Section 2 of \cite{foss;shneer;tyurlikov:12}. The process 
	$Z=(X,Y)$ there constitutes a non-Markovian chain $Y$ in a random environment $X$ which is a Markov chain.  $X$ is therefore an autonomous environment, which is ergodic due to a given Lyapunov function. The evolution of $Y$ depends on $X$ via the $X$-dependent transition probabilities.
	
	If we consider in our running example of the production-inventory system (see Example \ref{ex:QueueInventory1})  the 
	\emph{queue, i.e.~$X$,  as the environment of the inventory, i.e.~$Y$}, then the condition 
	\eqref{eq:birth-death-constant-finite-1} seems to fix ergodicity for $X$ via the Lyapunov function $\widetilde{\mathcal{L}}$ as in the model of \cite{foss;shneer;tyurlikov:12}.
	
	The point is that although $\widetilde{\mathcal{L}}$ guarantees in some sense a drift condition for $X$, the standard drift approach of Markov theory is not applicable because this environment $X$ is not Markov as required in \cite{foss;shneer;tyurlikov:12}. So, our theorems deal with a completely different situation.
\end{remark}
\medskip

The following corollary and example shed some light on the range of possible classes of models subsumed in Theorem \ref{thm:BS-PER-GER-pos-rec} and Corollary \ref{cor:BS-PER-GER-pos-rec-2}.

\begin{corollary}
	\label{cor:random-cor-mm1-bsp}The queueing-environment process $Z$
	is ergodic if there exists $N\in\mathbb{N}_{0}$ such that
	\[
	 \inf_{n\geq N}\left(\mu(n)-\lambda(n)\right)>0\quad
	\text{and}\quad\inf_{n\in\mathbb{N}_{0}}{c}_{n}>0,
	\]
	where ${c}_{n}$ is defined in \prettyref{lem:random-cn}.
\end{corollary}
\proof
For the $M/M/1/\infty$-queue with queue-length-dependent
arrival rates $\lambda(n)>0$ and service rates $\mu(n)>0$ let there be $N\in\mathbb{N}_{0}$ such that $\inf_{n\geq N}\left(\mu(n)-\lambda(n)\right)>0$.
Then $\widetilde{\mathcal{L}}:\mathbb{N}_{0}\rightarrow\mathbb{R}_{0}^{+}$ with $\widetilde{\mathcal{L}}(n):= n$, finite exception set $\widetilde{F}:= \{0,1,\ldots,N-1\}$ and constant $\widetilde{\varepsilon}:= \inf_{n\geq N}\left(\mu(n)-\lambda(n)\right)>0$ is
a Lyapunov function, which satisfies the Foster-Lyapunov stability
criterion (Proposition \ref{prop:KellyYudovina14}). Hence, we can apply \prettyref{thm:BS-PER-GER-pos-rec}.  \\
\endproof
%

\begin{example}\label{ex:random-bsp-perish} If $\sup_{n\in \mathbb{N}_{0}}\mu(n)<\infty$, then in the following examples it holds  $\inf_{n\in\mathbb{N}_{0}}{c}_{n}>0$. It should be noted that ${c}_{n}$ is only defined by $\mu(n+1)$, the generator $V_n$ and the stochastic matrix $R_n$ (since $\tau_n$ is determined by the generator $V_n$).
	\begin{itemize}
		\item [\textbf{(a)}]For the generator   $V_{n}=\left(v_{n}(k,\ell):k,\ell\in K\right)$
		it	holds 
		\[
		V_{2n-1}:= V_{1}\quad\text{and}\quad V_{2n}:= V_{2},\qquad n\in\mathbb{N},
		\]
		and for the stochastic matrix $R_{n}=(r_{n}(k,\ell):k,\ell\in K$)
		it	holds
		\[
		R_{2n-1}:= R_{1}\quad\text{and}\quad R_{2n}:= R_{2},\qquad n\in\mathbb{N}.
		\]
		A similar structure is found in birth-death processes with alternating
		rates which are considered e.g.~by \cite{dicrescenzo:12, dicrescenzo:14}. 
		\item [\textbf{(b)}]Let $N_{0}\in \mathbb{N}$. For the generator $V_{n}=\left(v_{n}(k,\ell):k,\ell\in K\right)$
		it	holds
		\[
		V_{n}:= V,\quad n\geq N_{0},
		\]
		and for the stochastic matrix $R_{n}=(r_{n}(k,\ell):k,\ell\in K$)
		it	holds
		\[
		R_{n}:= R,\quad n\geq N_{0}.
		\]
		Then, ${c}_{n}$ can be arbitrarily for $n\leq N_{0}-1$ and
		from $N_{0}$ it is bounded below by a ${c}_{\min}>0$. 
		Similar  structures are found
		\begin{itemize}
			\item[$\bullet$] in multi-server models ($M/M/s$-queues)
			which are studied e.g.\ by \cite[Section 6.2, Section 6.5]{neuts:81} ,
			\item[$\bullet$] in a queue with $N$ servers subject
			to breakdowns and repairs, studied by~\cite{neuts;lucantoni:79},
			\item[$\bullet$] in the study of complex multi-server retrial models by~\cite{neuts;rao:90} who introduced simplifying approximations
			to obtain a system with 
			an infinitesimal generator with a modified matrix-geometric
			steady state vector. This could be  computed efficiently.
		\end{itemize}
		\item [\textbf{(c)}]The production-inventory system with perishable items in the following Example \ref{ex:QueueInventory2} is a special case of \textbf{(b)}  with $N_{0}=1$.
	\end{itemize}
\end{example}

%
\begin{example}[Production-inventory system with  perishable items]\label{ex:QueueInventory2}
	Often products like foodstuffs, human blood, chemicals, etc.~have a maximum lifetime, i.e.~when they are hold in  inventories, they may either perish,
	deteriorate, are subject to ageing, or become obsolete. 
	We consider the production-inventory system of Example \ref{ex:QueueInventory1} and Example \ref{ex:QueueInventory11}, respectively,  with the additional restriction that
	the lifetime of raw material in the inventory is exponentially
	distributed with ``ageing rate'' $\gamma>0$.
	In the literature, it is often assumed that an item of raw material, being already in the production process does not perish any longer
	(e.g.~\cite{manuel;sivakumar;arivarignan:07,manuel;sivakumar;arivarignan:08},~\cite{jeganathan:14},~\cite{yadavalli;anbazhagan;jeganathan:15a}). More complex systems with additional features are found in \cite{koroliuk;melikov;ponomarenko;rustamov:17,koroliuk;melikov;ponomarenko;rustamov:18}, 
	where ``stock that is already at distribution stage cannot perish''.
	We  incorporate in  a standard production-inventory system  this form of perishing, which implies:
	\begin{itemize}
		\item [$\bullet$]If $n>0$ and there are $k>0$ items of raw material  in the inventory, 
		then  one piece of raw material is in production and does not perish. Consequently, the total loss rate of inventory due to perishing is $\gamma\cdot\left(k-1\right)\cdot1_{\left\{ n>0\right\} }$.
		\item [$\bullet$]If $n=0$ and there are $k>0$ items of raw material in the inventory, then the total loss rate
		of inventory due to perishing is $\gamma\cdot k\cdot1_{\left\{ n=0\right\} }$.
	\end{itemize}
	
	We call the functions $k\mapsto\gamma\cdot k\cdot1_{\left\{ n=0\right\} }$ and\ $k\mapsto\gamma\cdot(k-1)_{+}\cdot1_{\left\{ n>0\right\} }$
	ageing regimes which determine the  queue-length-dependent overall loss rates of inventory due to perishing.
	
	This production-inventory system  fits into the definition of the queueing system in a random environment described by a Markov process $Z=(X,Y)=$(queue-length, inventory size) on state space $E=\mathbb{N}_0\times K$ with $K:= \{0,1,\dots,b\}$ and
	\[
	K_{B}=\left\{0\right\} ,\qquad K_{W}=\left\{ 1,\ldots,b\right\} .
	\]
	$Y(t)\in K_{W}$  indicates for the inventory that there is stock on hand for production, and $Y(t)\in K_{B}$ indicates stock-out. Note that the physical environment of the production system includes the replenishment system
	which consists of a single server with exponential-$\nu$ service times. The status of the replenishment server at time $t$ is uniquely determined by the size of the inventory as $b-Y(t)$.
	The dynamics of $Z$ are determined by the infinitesimal generator $\mathbf{Q}:=\left(q(z;\widetilde{z}):z,\widetilde{z}\in E\right)$ with the following transition rates for $(n,k)\in E =\mathbb{N}_{0}\times K$:
	\begin{align*}
	q\left((n,k);(n+1,k)\right) & := \lambda(n) \cdot1_{\left\{ k\in\left\{ 1,\ldots,b\right\}\right\} },\\
	q\left((n,k);(n,k-1)\right) & := \left(\gamma\cdot(k-1)\cdot1_{\left\{ n>0\right\} }+\gamma\cdot k\cdot1_{\left\{ n=0\right\} }\right)\cdot1_{\left\{ k\in\left\{ 1,\ldots,b\right\}\right\} },\\
	q\left((n,k);(n-1,k-1)\right) & := \mu(n) \cdot1_{\left\{ n>0\right\} }\cdot1_{\left\{ k\in\left\{ 1,\ldots,b\right\}\right\} },\\
	q\left((n,k);(n,k+1)\right) & := \nu\cdot1_{\left\{k \in\left\{ 0,1,\ldots,b-1\right\}\right\} },
	\end{align*}
	and $q(z;\tilde{z}):= 0$ for  other $z\neq\tilde{z}$,
	and
	$q\left(z;z\right):= -\sum_{\substack{\tilde{z}\in E,\\
			z\neq\tilde{z}} }q\left(z;\tilde{z}\right)$ for all $z\in E.$
	
	The queue-length-dependent dynamics of the inventory process are determined by 
	\[
	r_{n}(0,0):= 1,\qquad r_{n}(k,k-1):= 1,\quad k\in\left\{ 1,\ldots,b\right\},\ n\in\mathbb{N},
	\]
	and $r_n(k,\ell):=0$ for other $k,\ell\in K,\ n\in\mathbb{N}$,
	%
	%
	\begin{align*}
	v_{0}(k,\ell) & := \begin{cases}
	\nu, & \text{if }k\in\left\{ 0,1,\ldots,b-1\right\},\ \ell=k+1,\\
	\gamma\cdot k, & \text{if }k\in\left\{ 1,\ldots,b\right\},\ \ell=k-1\\
	0, & \text{otherwise for }k\neq\ell,
	\end{cases}\\
	\\
	v_{n}(k,\ell) & := \begin{cases}
	\nu, & \text{if } k\in\left\{ 0,1,\ldots,b-1\right\},\ \ell=k+1,\\
	\gamma\cdot(k-1), & \text{if }k\in\left\{ 1,\ldots,b\right\},\ \ell=k-1,\\
	0, & \text{otherwise for }k\neq\ell,
	\end{cases}\quad  n\in \mathbb{N},
	\end{align*}
	and $v_n\left(k,k\right):= -\sum_{\substack{\ell\in K,\\
			k\neq\ell} }v_n\left(k,\ell\right)$ for all $k\in K$ and $n\geq 0$.	
\end{example}	
\medskip

We are able to show by an explicit example in Corollary \ref{cor:QueueInventoryPerishable-PF} below that in the production-inventory system of Example \ref{ex:QueueInventory2} 
the stationary distribution is in general not of product form. 
The proof is  direct: (i) Insert the proposed stationary distribution into the steady-state 
(global balance) equations. (ii) Ergodicity of the production-inventory
process follows from summability of the obtained solution under 
$\lambda<\mu$. (iii) Verify that the stationary distribution is not the product of its marginal distributions. This leads to

\begin{corollary}\label{cor:QueueInventoryPerishable-PF}
	Consider the production-inventory system of Example \ref{ex:QueueInventory2} with state-independent rates
	$\lambda(n)=\lambda$ and $\mu(n)=\mu$ for all $n$ and some $\lambda,\ \mu >0$
	and base stock level $b=1$.
	If $\lambda<\mu$, the production-inventory process is ergodic, and the stationary distribution $\pi$ is given by
	\begin{align*}
	&\pi(0,0) := C^{-1} \cdot \frac{\lambda + \gamma }{\nu },\quad \\
	&\pi(n,0) := C^{-1} \cdot \left(\frac{\lambda}{\mu }\right)^{n}\cdot \frac{\lambda}{\nu },\   n>0,\qquad 
	\pi(n,1) := C^{-1} \cdot \left(\frac{\lambda}{\mu }\right)^{n}, \  n\geq 0,
	\end{align*}
	with normalization constant
	\begin{align*}
	C := \frac {\mu}{\mu - \lambda}\cdot \left(1+\frac{\lambda}{\nu}\right)+\frac{\gamma}{\nu}.
	\end{align*}
\end{corollary}
The case of general base stock level $b$ in Example \ref{ex:QueueInventory2} can be proved using case \textbf{(b)} of Example \ref{ex:random-bsp-perish}. This is the result of 
\begin{corollary}
	\label{cor:QueueInventoryPerishable-NonPF}
	Consider the production-inventory system of Example \ref{ex:QueueInventory2} with state-dependent rates
	$\lambda(n), \mu(n)\in (0,\infty)$ and general base stock level $b$.
	If the isolated production system (without inventory-replenishment system) with rates $\lambda(n), \mu(n) \in (0,\infty)$ is ergodic, then the production-inventory system (with replenishment system) is ergodic.
\end{corollary}

\begin{remark}\label{rem:AllErgodic}
	The construction of Lyapunov functions under the assumptions of Theorem \ref{thm:BS-PER-GER-pos-rec} can be done using the same procedure and conditions as above for systems which are separable. This technique is usually not needed if we have found (as in case of separability in Section \ref{sect:Separable}) a stationary measure which must be summable to obtain  a stationary distribution. This proves ergodicity.
\end{remark}

\subsection{Exponential ergodicity via Lyapunov functions}\label{sect:expo-ergodicity-Lyapunov}

The Lyapunov  function for ergodicity in Theorem \ref{thm:BS-PER-GER-pos-rec} is of ``additive separable'' structure, see Remark \ref{rem:LyaFctIndependence}. 
A path to exponential ergodicity via some similar
``additive separability'' seems to be not possible.
Our approach will be multiplicative, i.e.~the Lyapunov function developed below is of product form. 
The factors are (as the sums in Theorem \ref{thm:BS-PER-GER-pos-rec}) a term which stems from exponential ergodicity of the queueing component in isolation and a term which is responsible for sufficiently fast return of the environment to $K_W$ whenever it enters $K_B$.
Recall that we assume that $K$ is finite.
We start with a short remark on the criteria for exponential ergodicity.
\begin{remark}\fakephantomsection\label{rem:ExpErgVariant}
	\begin{enumerate}
		\item [\textbf{(i)}]  Due to \eqref{eq:ExpoErg2} the condition \eqref{eq:ExpoErg1} can be stated as \cite[Theorem 6.5 (6.8)]{anderson:91}
		\begin{equation}
		\sum_{y\in (E\setminus F) \setminus\{x\}}q(x;y){\cal M}(y)
		\leq(-q(x;x)-\sigma){\cal M}(x) - 1,~~~
		x\in E\setminus F.
		\end{equation}
		\item[\textbf{(ii)}] A necessary condition for exponential ergodicity according to Proposition \ref{prop:Anderson91} is 
		$\inf_{x\in E}(-q(x;x))>0$.	
	\end{enumerate}
\end{remark}	

Recall the definition of the  Markov processes 
$Y^{(n)}:= (Y^{(n)}(t):t\geq 0)$ on $K$ with generator $V_n=(v_n(k,\ell):k,\ell\in K)$ (for $n\in \mathbb{N}_0$)
before Lemma \ref{lem:BS-PER-GEN-Lemma2}, and that
$T_n$ denotes the $[0,\infty]$-valued first-entrance time of $Y^{(n)}$ into $K_W$.
The proof of the following lemma is inspired by \cite[Chapter 6, Lemma 1.5]{anderson:91}.

\begin{lemma}\label{lem:Thetas}
	Define for $k\in K_B$ and $\sigma_n\in (0,\min_{\ell\in K_B}(-  v_n(\ell,\ell)))$
	\begin{equation}\label{eq:Theta1}
	\theta_n(k) := \int_{0}^{\infty} P(T_n>t|Y^{(n)}(0)=k)
	e^{\sigma_n t} dt.
	\end{equation}
	For $k\in K_W$ define $\theta_n(k):=0.$ Then it holds
	\begin{equation}\label{eq:Theta2}
	\sum_{\ell\in K\setminus\{k\}} v_n(k,\ell)[\theta_n(\ell)-\theta_n(k)] =
	-\sigma_n\cdot \theta_n(k) - 1,\quad k\in K_B. 
	\end{equation}
	For all $k\in K_B$ and all $\sigma_n\in (0,\min_{\ell\in K_B}(- v_n(\ell,\ell)))$ it holds
	$0< \theta_n(k)  < \infty.$
\end{lemma}	
Note that $\min_{\ell\in K_B}(-  v_n(\ell,\ell)) >0$ because $K_B$ has no absorbing states and \eqref{eq:Theta2} can be written as
\begin{equation}\label{eq:Theta3}
\sum_{\ell\in K_B\setminus\{k\}} v_n(k,\ell)\theta_n(\ell)=
(-v_n(k,k)-\sigma_n)\cdot \theta_n(k) - 1,\quad   k\in K_B. 
\end{equation}
\proof
We  abbreviate $v_n(k):=-v_n(k,k)$ for $k\in K$.
Then for  $k\in K_B$ it holds
\begin{flalign*}
&\phantomeq \theta_n(k) \\
&= \int_{0}^{\infty} P(T_n>t|Y^{(n)}(0)=k)e^{\sigma_n t}dt\\
&\stackrel{(*)}{=}
\int_{0}^{\infty}  \left( \sum_{\ell\in K_{B}}P(T_n>t, Y^{(n)}(t) =\ell|Y^{(n)}(0)=k)\right) e^{\sigma_n t}dt\\
&=
\sum_{\ell\in K_{B}}\int_{0}^{\infty}  \left(P(T_n>t, Y^{(n)}(t) =\ell|Y^{(n)}(0)=k)\right) e^{\sigma_n t} dt \\
&= \sum_{\ell\in K_{B}}\int_{0}^{\infty} \Big( P(T_n>t, Y^{(n)}(t) =\ell, ~\text{no jump until}~ t|Y^{(n)}(0)=k)\\
&\phantomeq+P(T_n>t, Y^{(n)}(t) =\ell, ~\text{jump before}~ t|Y^{(n)}(0)=k) \Big) e^{\sigma_n t}dt\\
&= \sum_{\ell\in K_{B}}\int_{0}^{\infty}  1_{\{\ell=k\}} e^{-v_n(k)t}  e^{\sigma_n t}dt\\
&\phantomeq+ \sum_{\ell\in K_{B}}\int_{0}^{\infty} \Big(
\int_{0}^{t}  v_n(k) e^{-v_n(k)s} \sum_{\substack{h\in K_{B}\\h\neq k}}
\frac{v_n(k,h)}{v_n(k)}
P(T_n>t-s, Y^{(n)}(t-s) =\ell|Y^{(n)}(0)=h)ds\Big)e^{\sigma_n t}dt\\
&=\int_{0}^{\infty}  e^{-v_n(k)t}  e^{\sigma_n t}dt\\
&\phantomeq+ \sum_{\ell\in K_{B}}\int_{0}^{\infty} \Big(
\int_{0}^{t}   e^{-(v_n(k)-\sigma_n)s} \sum_{\substack{h\in K_{B}\\h\neq k}} v_n(k,h)
P(T_n>t-s, Y^{(n)}(t-s) =\ell|Y^{(n)}(0)=h)\Big)e^{\sigma_n(t-s)}dsdt\\
&=\int_{0}^{\infty}  e^{-v_n(k)t}  e^{\sigma_n t}dt\\
&\phantomeq+\int_{0}^{\infty}  e^{-(v_n(k)-\sigma_n)s} \sum_{\substack{h\in K_{B}\\h\neq k}} v_n(k,h)  \sum_{\ell\in K_{B}}\int_{s}^{\infty}
e^{\sigma_n(t-s)} 
P(T_n>t-s, Y^{(n)}(t-s) =\ell|Y^{(n)}(0)=h)dtds\\
&=\int_{0}^{\infty}  e^{-v_n(k)t}  e^{\sigma_n t} dt\\
&\phantomeq+\int_{0}^{\infty}  e^{-(v_n(k)-\sigma_n)s} ds \sum_{\substack{h\in K_{B}\\h\neq k}} v_n(k,h)  \sum_{\ell\in K_{B}}\int_{0}^{\infty} 
e^{\sigma_n(t)}
P(T_n>t, Y^{(n)}(t) =\ell|Y^{(n)}(0)=h)dt\\
&=\int_{0}^{\infty}  e^{-v_n(k)t}  e^{\sigma_n t}dt
+\int_{0}^{\infty}  e^{-(v_n(k)-\sigma_n)s} ds~ \sum_{\substack{h\in K_{B}\\h\neq k}} v_n(k,h) 
\underbrace{\int_{0}^{\infty}  e^{\sigma_n(t)}
	P(T_n>t|Y^{(n)}(0)=h)}_{=\theta_n(h)} dt\\
&=\frac{1}{v_n(k)-\sigma_n} + 
\frac{1}{v_n(k)-\sigma_n}\sum_{\substack{h\in K_{B}\\h\neq k}} v_n(k,h)\theta_n(h).
\end{flalign*}	
Here $(*)$ follows from
$P(T_n>t, Y^{(n)}(t) =\ell|Y^{(n)}(0)=k)=0$ 
for $\ell\in K_W$.
Rearranging terms yields the proposed formula. Positivity of the 
$\theta_n(k)$ follows from $\int_{0}^{\infty} P(T_n(k)>t|Y^{(n)}(0)=k)
e^{\sigma_n t}dt > \int_{0}^{\infty} P(T_n(k)>t|Y^{(n)}(0)=k)dt =\tau_{n}(k)>0.$
Finiteness of the $\theta_n(k)$ follows from the observation (integration by parts)
\begin{equation*}
\theta_n(k)=\frac{\mathbb{E}[e^{\sigma_n T_n}|Y^{(n)}(0)=k]-1}{\sigma_n}.
\end{equation*}
$\theta_n(k)<\infty$ holds because we can extend $Y^{(n)}$ to a Markov process $\hat{Y}^{(n)}$ with  state space $K_B \cup \{a\}$ as follows: On $K_B$ both processes move identically. Whenever $Y^{(n)}$ leaves $K_B$ the extended process enters $a$, dwells there for an exponential time and then enters all states in $K_B$ which 
the environment  can reach from $K_W$ with equal probability. Then  $\hat{Y}^{(n)}$ moves again according to the law of ${Y}^{(n)}$, and so on.
$\hat{Y}^{(n)}$ (with finite state space) is exponentially ergodic and the return time $\rho_n$ to $a$ when starting in $a$ dominates $T_n$. According to \cite[Theorem 6.5 (b)]{anderson:91} the moment generating function of $\rho_n$ exists and therefore that of $T_n$.

\endproof
A direct consequence of Proposition \ref{prop:Anderson91} is the following criterion for birth-death processes.
\begin{lemma}
	\label{lem:ExpoErgodicBD}
	Consider an $M/M/1/\infty$-queue with queue-length-dependent
	arrival rates $\lambda(n)\in (0,\infty)$ and service rates $\mu(n)\in (0,\infty)$ which is exponentially ergodic.
	Then there exists a function
	$\widetilde{\mathcal{M}}:\mathbb{N}_{0}\rightarrow\mathbb{R}_{0}^{+}$ (Lyapunov function) for the queue length process  with finite exception set $\widetilde{F}\neq \emptyset$ and constant $\widetilde{\omega}\in (0,\lambda(0)\wedge \inf_{n\in \mathbb{N}}(\lambda(n)+\mu(n)))$ which satisfies the 
	criterion from Proposition \ref{prop:Anderson91}, {in particular} the following inequalities are satisfied with
	$\widetilde{\mathcal{M}}(n)=0$ for $n\in \widetilde{F}$.
	\begin{align}
	& \lambda(0)\cdot\left(\widetilde{\mathcal{M}}(1)-
	\widetilde{\mathcal{M}}(0)\right) \leq -1 - \widetilde{\omega}\cdot \widetilde{\mathcal{M}}(0),
	&  & \text{if }0\notin\widetilde{F},\label{eq:exp-mm1-eq1}\\
	& \lambda(n)\cdot\left(\widetilde{\mathcal{M} }(n+1)-\widetilde{\mathcal{M}}(n)\right)+\mu(n)\cdot\left(\widetilde{\mathcal{M}}(n-1)-\widetilde{\mathcal{M}}(n)\right)\leq -1 -\widetilde{\omega}\cdot \widetilde{\mathcal{M}}(n), &  & \text{if }n\notin\widetilde{F},\ n>0.\label{eq:exp-mm1-eq2}
	\end{align}	              
\end{lemma}	
\begin{corollary}\label{cor:ExpoErgodicBD} 
	Consider an exponentially ergodic $M/M/1/\infty$-queue with queue-length-dependent arrival rates $\lambda(n)\in (0,\infty)$ and service rates $\mu(n)\in (0,\infty)$ as in Lemma \ref{lem:ExpoErgodicBD}.
	Denote the associated queue length process by
	$\widetilde{X}:=(\widetilde{X}(t):t\geq 0)$.
	For any $F\subset\mathbb{N}_0$
	denote by $\widetilde{S}_F$ the first-entrance time of 
	$\widetilde{X}$ into  $F,$ and by 
	$\widetilde{S}_{mF}$ a random variable which is distributed according to 
	$P(\widetilde{S}_{F} \leq \cdot|\widetilde{X}(0)=m)$ for
	$m\in\mathbb{N}_0.$ (So $\widetilde{S}_{mF}$ has 1-point distribution in $0$ if $m\in F$.) 
	If $F=\{n\}$, we abbreviate 
	$\widetilde{S}_F$ by $\widetilde{S}_n$ and 
	$\widetilde{S}_{mF}$ by $\widetilde{S}_{mn}$.
	The following properties 
	of $\widetilde{X}$ hold.
	\begin{enumerate}
		\item[\textbf{(a)}] For any finite $\widetilde{F}\neq \emptyset$ and suitable  $\widetilde{\omega}=\widetilde{\omega}(\widetilde{F})\in (0,\lambda(0)\wedge \inf_{n\in \mathbb{N}}(\lambda(n)+\mu(n)))$
		the conditions \eqref{eq:ExpoErg2}--\eqref{eq:ExpoErg1} are satisfied (\eqref{eq:ExpoErg1} with equality) by
		\begin{align}
		&\widetilde{\mathcal{M}}(n) =0, \quad n\in \widetilde{F}
		\label{eq:M-BD-1}\\
		&\widetilde{\mathcal{M}}(n) = \int_{0}^{\infty} P(\widetilde{S}_{n\widetilde{F}}>t) e^{\widetilde{\omega} t} \,dt,
		~~~ n\not\in \widetilde{F}. \label{eq:M-BD-2}
		\end{align}	
		If $\widetilde{\mathcal{M}}_0(n), n \in \mathbb{N}_0,$ is (for the same $\widetilde{F}$
		and  $\widetilde{\omega}$) another solution of 
		\eqref{eq:ExpoErg2}--\eqref{eq:ExpoErg1}, then it holds $\widetilde{\mathcal{M}}_0(n)\geq \widetilde{\mathcal{M}}(n)$ for all $n \in \mathbb{N}_0$, i.e.~$\widetilde{\mathcal{M}}$ is the minimal solution of 
		\eqref{eq:ExpoErg2}--\eqref{eq:ExpoErg1}.
		\\	
		(A proof is given in  \cite[Theorem 6.5 and Lemma 1.5 in Chapter 6]{anderson:91}.)
		\item[\textbf{(b)}] For any $m\geq 0$ and $n\geq m$ it holds
		$\widetilde{S}_{nm} \sim \widetilde{S}_{n n-1}+
		\widetilde{S}_{n-1 n-2}+\dots+\widetilde{S}_{m+1 m}$ and the random variables on the right-hand side are independent. So the sequence $(\widetilde{S}_{nm}:n\geq m)$ is stochastically increasing in $n$ for any $m\geq 0$.
		Consequently, if $\widetilde{F}=\{0,1,\dots,m\}$, the sequence $(\widetilde{\mathcal{M}}(n):n> m)$ is strictly increasing in $n\geq m$.
		\item[\textbf{(c)}] $\widetilde{S}_{10}$ is distributed according to the busy period of the $M/M/1/\infty$-queue. 
		\item[\textbf{(d)}] For the queueing system with state-independent rates  $\lambda(n)=\lambda, \mu(n)=\mu$ with $\lambda<\mu$
		it holds, see \cite[p.~105]{asmussen:03},
		\begin{equation}
		\mathbb{E}[\widetilde{S}_{10}] = \frac{1}{\mu - \lambda}.
		\end{equation}
		In this system the random variables $ \widetilde{S}_{n n-1},
		\widetilde{S}_{n-1 n-2},\dots,\widetilde{S}_{10}$ in \textbf{(b)} are independent and identically distributed.
	\end{enumerate}
\end{corollary}
We now combine Lemma \ref{lem:Thetas} and Lemma \ref{lem:ExpoErgodicBD}  to characterize the behaviour of $Z$.

\begin{theorem}\label{thm:ExpErgodic}
	Consider the ergodic queueing-environment process $Z$ with finite environment set $K$. Assume that the $M/M/1/\infty$-queue
	 with queue-length-dependent arrival rates $\lambda(n)\in (0,\infty)$ and service rates $\mu(n)\in (0,\infty)$ in isolation is exponentially ergodic and that
	\begin{equation}\label{eq:MtildeNull}
	\widetilde{\mathcal{M}_0}:\mathbb{N}_{0}\rightarrow [0,\infty)
	\end{equation}
	is a  Lyapunov function for this process in the sense of Proposition \ref{prop:Anderson91} (for ergodic Markov processes) with finite exception set $\widetilde{F}$ and constant ${\widetilde{\omega}}\in (0,\lambda(0)\wedge \inf_{n\in \mathbb{N}}(\lambda(n)+\mu(n)))$ according to Lemma \ref{lem:ExpoErgodicBD}.
	For all $n\in \mathbb{N}_0\setminus\widetilde{F}$ define
	with  $\sigma_n\in (0,\inf_{\ell\in K_B}(-  v_n(\ell,\ell)))$
	as in \eqref{eq:Theta1} (recall that $\theta_n(k):=0$
	for all $k\in K_W$):
	\[
	\theta_n(k) := \int_{0}^{\infty} P(T_n>t|Y^{(n)}(0)=k)
	e^{\sigma_n t}dt, \quad k\in K_B,
	\]
	and according to Corollary \ref{cor:ExpoErgodicBD}\textbf{(a)}
	\begin{equation}\label{eq:Mtilde}
	\widetilde{\mathcal{M}}:\mathbb{N}_{0}\rightarrow [0,\infty)
	\end{equation}
	with $\widetilde{\mathcal{M}}(n)=0$ for $n\in \widetilde{F}$ and
	\begin{equation}
	\widetilde{\mathcal{M}}(n) = \int_{0}^{\infty} P(\widetilde{S}_{n\widetilde{F}}>t) e^{\widetilde{\omega} t} \,dt,
	~~~ n\not\in \widetilde{F}. \label{eq:M-BD-2NEU}
	\end{equation}	
	Assume that it holds:
	\begin{enumerate}
		\item[\textbf{(i)}] 
		$\sigma:=\inf_{n\in \mathbb{N}_0\setminus\widetilde{F}}\inf_{\ell\in K_B}(-  v_n(\ell,\ell))>0$, and
		\item[\textbf{(ii)}]
		there exists $m_0\in \mathbb{N}_0$ with $m_0\geq \sup \widetilde{F}$  and a  sequence of positive  numbers 
		$(d_n:n>m_0)$ and constants $\alpha, \beta\in (0,1)$ 
		such that (with $d_{m_0} := 0$)
		it holds
		\begin{align}
		& \sum_{\ell\in K_B} \mu(n)\cdot r_{n}(k,\ell)\cdot 
		\Big(\theta_{n-1}(\ell)\cdot d_{n-1} -1\Big)\cdot \frac{\widetilde{\mathcal{M}}(n-1)}{\widetilde{\mathcal{M}}(n)}\nonumber\\
		&\qquad\qquad\qquad\qquad + \sum_{\ell\in K_B} v_{n}(k,\ell)\cdot
		\Big(\theta_{n}(\ell)\cdot d_{n} -1\Big)
		\leq  \alpha\cdot \widetilde{\omega},~~ k\in K_W,~~ n> m_0,
		\label{eq:ExpECond-1}\\
		&\sum_{\ell\in K_W} v_{n}(k,\ell) +\frac{1}{\widetilde{\mathcal{M}}(n)} - d_n
		\leq \beta\cdot \sigma_n\cdot\theta_{n}(k)\cdot d_n,~~	
		k\in K_B,~~ n> m_0.\label{eq:ExpECond-2}
		\end{align}
	\end{enumerate}
	Then
	\begin{equation}\label{eq:LFctExpErg}
	{\mathcal{M}}\colon E \to [0,\infty), 
	~~(n,k)\mapsto \begin{cases}
	0                           & n\leq  m_0,\\
	\widetilde{\mathcal{M}}(n), &k\in K_W, n>  m_0,\\
	\widetilde{\mathcal{M}}(n)\cdot \theta_{n}(k)\cdot d_n,
	&k\in K_B,  n>  m_0,
	\end{cases}
	\end{equation}	
	is a Lyapunov function for exponential ergodicity, as defined in Proposition \ref{prop:Anderson91}, with exception set 
	{$F:= \{0,1,\dots,m_0\}\times K$}  and constant 
	${\omega}:= \left((1-\alpha)\widetilde{\omega}\wedge
	(1-\beta)\sigma\right)$ and $Z$ is exponentially ergodic. 
\end{theorem}
Before proving the theorem a short remark is in order. Introducing the lower boundary $m_0$ for the relevant queue lengths in the criterion enables us to neglect in applications possible extreme behaviour of the environment (with respect to conditions \eqref{eq:ExpECond-1} and \eqref{eq:ExpECond-2}) for a finite set of queue lengths. In Example \ref{ex:QueueInventory4}
below setting an (artificial) boundary $m_0$ supports to prove exponential ergodicity.
%
\proof
We apply Proposition \ref{prop:Anderson91} and show that $\mathcal{M}$ is a Lyapunov function for $Z$ with the proposed finite exception set {$F:= \{0,1,\dots,m_0\}\times K$}
and constant $\omega$.

We first note that $\widetilde{\mathcal{M}}$ is a Lyapunov function for the isolated $M/M/1/\infty$-queue with exception set $\widetilde{F}$ and constant $\widetilde{\omega}$
according to Corollary \ref{cor:ExpoErgodicBD}\textbf{(a)}
and therefore satisfies the system 	\eqref{eq:ExpoErg2}--\eqref{eq:ExpoErg1} of Proposition \ref{prop:Anderson91} (with equality).

To check
$
\sum_{(m,\ell)\in E} q((n,k);(m,\ell)) \mathcal{M}(m,\ell)<\infty 
$
for $(n,k)\in F$ 
is direct 
because  jumps of $X$ are of distance $1$, and $|K|<\infty$.\\
$\blacktriangleright$ For $k\in K_{W}$ and  
$n > m_0+1$ it holds
\begin{align}
& \phantomeq
\left(\mathbf{Q\cdot\mathcal{M}}\right)\left(n,k\right)\nonumber \\
&=\lambda(n)\cdot\left(\mathcal{M}(n+1,k)-\mathcal{M}(n,k)\right) 
+\sum_{\ell\in K}\mu(n)\cdot r_{n}(k,\ell)\cdot\left(\mathcal{M}(n-1,\ell)
-\mathcal{M}(n,k)\right)\nonumber\\
&
\phantomeq +\sum_{\ell\in K\setminus\{k\}}v_{n}(k,\ell)\cdot\left(\mathcal{M}(n,\ell)-\mathcal{M}(n,k)\right)\nonumber\\
&=\lambda(n)\cdot\left(\widetilde{\mathcal{M}}(n+1)-
\widetilde{\mathcal{M}}(n)\right)
+\sum_{\ell\in K_W}\mu(n)\cdot r_{n}(k,\ell)\cdot\left(\widetilde{\mathcal{M}}(n-1)
-\widetilde{\mathcal{M}}(n)\right)\nonumber\\
& \phantomeq
+\sum_{\ell\in K_B}\mu(n)\cdot r_{n}(k,\ell)\cdot\left(\widetilde{\mathcal{M}}(n-1)\cdot\theta_{n-1}(\ell)\cdot d_{n-1}
-\widetilde{\mathcal{M}}(n)\right)\nonumber\\
&\phantomeq +\sum_{\ell\in K_B}v_{n}(k,\ell)\cdot\left(\widetilde{\mathcal{M}}(n)\cdot\theta_n(\ell)\cdot d_n-\widetilde{\mathcal{M}}(n)\right)\nonumber\\
&=\left\langle\lambda(n)\cdot\left(\widetilde{\mathcal{M}}(n+1)- \widetilde{\mathcal{M}}(n)\right)
+\mu(n) \cdot\left(\widetilde{\mathcal{M}}(n-1)
-\widetilde{\mathcal{M}}(n)\right)\right\rangle\nonumber\\
&\phantomeq-\sum_{\ell\in K_B}\mu(n)\cdot r_{n}(k,\ell)\cdot\left(\widetilde{\mathcal{M}}(n-1)
-\widetilde{\mathcal{M}}(n)\right)\nonumber\\
&\phantomeq+\sum_{\ell\in K_B}\mu(n)\cdot r_{n}(k,\ell)\cdot\left(\widetilde{\mathcal{M}}(n-1)\cdot\theta_{n-1}(\ell)\cdot d_{n-1}
-\widetilde{\mathcal{M}}(n)\right)\nonumber\\
&\phantomeq +\sum_{\ell\in K_B}v_{n}(k,\ell)\cdot\left(\widetilde{\mathcal{M}}(n)\cdot\theta_n(\ell)\cdot d_n-\widetilde{\mathcal{M}}(n)\right)\nonumber\\
&\stackrel{(*)}{=}\left\langle
-1-\widetilde{\omega}\cdot\widetilde{\mathcal{M}}(n)
\right\rangle
+ \widetilde{\mathcal{M}}(n)
\left[\mu(n)\cdot \sum_{\ell\in K_B} r_{n}(k,\ell)\cdot \frac{\widetilde{\mathcal{M}}(n-1)}{\widetilde{\mathcal{M}}(n)}
\Big(\theta_{n-1}(\ell)\cdot d_{n-1} -1\Big)\right.\nonumber\\
&\phantomeq \left. + \sum_{\ell\in K_B} v_{n}(k,\ell)\cdot 
\Big(\theta_{n}(\ell)\cdot d_{n} -1\Big)\right]\label{eq:ForEx-QueueInventory3-1}\\
&\stackrel{\eqref{eq:ExpECond-1}}{\leq}\left\langle
-1-\widetilde{\omega}\cdot\widetilde{\mathcal{M}}(n)
\right\rangle
+\widetilde{\mathcal{M}}(n)\cdot\alpha\cdot \widetilde{\omega}
=\left\langle
-1-(1-\alpha)\widetilde{\omega}\cdot\widetilde{\mathcal{M}}(n)
\right\rangle \nonumber \\
&\leq \left\langle
-1-{\omega}\cdot\widetilde{\mathcal{M}}(n)
\right\rangle
= 
-1-{\omega}\cdot\mathcal{M}(n,k)
. \nonumber
\end{align}
Here $(*)$ follows from Lemma \ref{lem:ExpoErgodicBD} and Corollary \ref{cor:ExpoErgodicBD}\textbf{(a)}.\\ 	
$\blacktriangleright$ The case $k\in K_W$ and $n=m_0+1$  leads to  similar computations with some slight simplifications. \\
$\blacktriangleright$ For $k\in K_{B}$ and 
$n> m_0$ 
it holds
\begin{align}
& \phantomeq \left(\mathbf{Q\cdot\mathcal{M}}\right)\left(n,k\right)\nonumber \\
&=\sum_{\ell\in K\setminus\{k\}}v_{n}(k,\ell)\cdot\left(\mathcal{M}(n,\ell)-\mathcal{M}(n,k)\right)\nonumber\\
& =\sum_{\ell\in K_B\setminus\{k\}}
v_{n}(k,\ell)\cdot
\left(
\widetilde{\mathcal{M}}(n)\cdot\theta_n(\ell)\cdot d_n  
- \widetilde{\mathcal{M}}(n)\cdot\theta_n(k)\cdot d_n 
\right)\nonumber\\
&\phantomeq +
\sum_{\ell\in K_W}
v_{n}(k,\ell)\cdot
\left(
\widetilde{\mathcal{M}}(n) 
- \widetilde{\mathcal{M}}(n)\cdot\theta_n(k)\cdot d_n 
\right)\nonumber\\
& =\widetilde{\mathcal{M}}(n)\cdot d_n
\sum_{\ell\in K_B\setminus\{k\}}
v_{n}(k,\ell) \left(\theta_n(\ell)  - \theta_n(k)
\right)\nonumber
+
\widetilde{\mathcal{M}}(n) \sum_{\ell\in K_W}
v_{n}(k,\ell)\cdot
\left(1 - \theta_n(k)\cdot d_n 
\right)\nonumber\\
& =\widetilde{\mathcal{M}}(n)\cdot d_n
\left\langle\sum_{\ell\in K\setminus\{k\}}
v_{n}(k,\ell) \left(\theta_n(\ell)  - \theta_n(k)
\right)\right\rangle\nonumber
-\widetilde{\mathcal{M}}(n)\cdot d_n
\sum_{\ell\in K_W}
v_{n}(k,\ell) \left(0  - \theta_n(k)
\right)\nonumber\\
&\phantomeq+
\widetilde{\mathcal{M}}(n) \sum_{\ell\in K_W}
v_{n}(k,\ell)\cdot
\left(1 - \theta_n(k)\cdot d_n 
\right)\nonumber\\
& \stackrel{\eqref{eq:Theta2}}{=}\widetilde{\mathcal{M}}(n)\cdot d_n
\Big\langle
-1 -\sigma_n\cdot \theta_{n}(k)
\Big\rangle +
\widetilde{\mathcal{M}}(n) \sum_{\ell\in K_W}
v_{n}(k,\ell)\nonumber\\
& = -\widetilde{\mathcal{M}}(n)\cdot d_n
+
\widetilde{\mathcal{M}}(n) \sum_{\ell\in K_W}
v_{n}(k,\ell)
-\sigma_n\cdot 
\widetilde{\mathcal{M}}(n)\cdot \theta_{n}(k)\cdot d_n
\nonumber\\
&=\left\langle -1 -\sigma_n \cdot \widetilde{\mathcal{M}}(n)\cdot \theta_{n}(k)\cdot d_n
\right\rangle + \widetilde{\mathcal{M}}(n) \sum_{\ell\in K_W}
v_{n}(k,\ell)
-\left(\widetilde{\mathcal{M}}(n)\cdot d_n -1\right)
\label{eq:ForEx-QueueInventory3-2}
\\
&\stackrel{\eqref{eq:ExpECond-2}}{\leq}
\left\langle -1 -\sigma_n \cdot \widetilde{\mathcal{M}}(n)\cdot \theta_{n}(k)\cdot d_n
\right\rangle 
+ \widetilde{\mathcal{M}}(n) \theta_{n}(k) d_n \sigma_n \beta
\nonumber\\
&\stackrel{\eqref{eq:LFctExpErg}}{=}
\left\langle -1 -(1-\beta)\sigma_n \cdot {\mathcal{M}}(n,k)
\right\rangle 
\leq -1 -\omega \cdot {\mathcal{M}}(n,k).
\nonumber \qquad\qquad\qquad\qquad\qquad\qquad
\end{align}
\endproof
\begin{remark}\label{rem:LyaFunctExpErg}
	The construction of the Lyapunov function for exponential ergodicity with
	\[{\mathcal{M}}(n,k)=
	\widetilde{\mathcal{M}}(n)\cdot \exp\left[ 1_{\{k\in K_B\}}     \log \left( \theta_{n}(k)\cdot d_n \right)\right],
	\quad k\in K,\  n\geq  m_0,
	\]
	shows that the terms
	\[
	\exp\left[ 1_{\{k\in K_B\}}     \log \left( \theta_{n}(k)\cdot d_n \right)\right]
	\]
	(relevant for the environment) are multiplicatively separated from the terms $\widetilde{\mathcal{M}}(n)$ (relevant for the queue), i.e.~we obtained a construction in product form.
	
	Different from the situation with standard ergodicity in Theorem \ref{thm:BS-PER-GER-pos-rec} in the  conditions \eqref{eq:ExpECond-1} and \eqref{eq:ExpECond-2} 
	the terms for the queue and the environment are intertwined.
	This indicates that a stronger coupling of the dynamics of queue and environment is needed to obtain the  faster convergence of the system to stationarity.
	
	The  product form criterion in Theorem \ref{thm:ExpErgodic} is in line with the results in 
	\cite{spieksma;tweedie:94}. 
	For Markov chains (in discrete time) with Lyapunov function $V$ (similar to Proposition \ref{prop:KellyYudovina14})
	the authors develop conditions which ensure that $V^* := e^{\delta \cdot V(\cdot)}$ is for some $\delta>0$ a Lyapunov function which detects exponential ergodicity of the Markov chain (similar to Proposition \ref{prop:Anderson91}).		
	The technique developed there is not applicable in our problem setting, but we note that the procedure would turn an additive $V$ on a $2$-dimensional state space 
	into a multiplicative $V^*$.
	
	Nevertheless, there is no such direct progress from ergodicity to exponential ergodicity in our setting because the $\theta_{n}$ are not the exponentials of the $\tau_{n}$. 
\end{remark}

\begin{example}\label{ex:QueueInventory4}
	We consider the production-inventory system with perishable items from Example \ref{ex:QueueInventory2}	 with state-independent arrival and service rates $\lambda,\mu$. According to Corollary \ref{cor:QueueInventoryPerishable-NonPF} the state process $Z$ is ergodic if $\lambda<\mu$. Recall that $K_W=\{1,\dots,b\}$ and  $K_B=\{0\}$ {and $E=\mathbb{N}_0\times \{0,1,\ldots, b\}$}.\\		
	For $k=0$ (stock-out) we obtain for the first-entrance times $T_n$, $n\geq 1$, into $K_W$ (which occurs if a replenishment arrives at the empty inventory) with $\sigma_n<\nu$
	\begin{equation}\label{eq:theta-Perish-3}	
	\theta_{n}(0) = \int_{0}^{\infty} P(T_n>t|Y^{n}(0)=0)
	e^{\sigma_n  t} dt=
	\int_{0}^{\infty} e^{-\nu t}e^{\sigma_n  t}dt =
	\frac{1}{\nu - \sigma_n}.
	\end{equation}
	For $k=1,\dots,b$ it holds $\theta_{n}(0) = 0$.
	Note that $\sigma_n$ and therefore $\theta_{n}(0)$ can be taken independent of $n\geq 1$. We set
	\[
	0<\sigma_n=:\sigma<\nu ~~~\text{and}~~~ \theta(0):=\theta_{n}(0),~~~n\geq 1.
	\]
	Because the queue length process of the $M/M/1/\infty$-queue with 
	$\lambda<\mu$ in isolation is exponentially ergodic, a
	Lyapunov function $\widetilde{\mathcal{M}}$ according to
	Corollary \ref{cor:ExpoErgodicBD}\textbf{(a)} exists with $\widetilde{F}=\{0\}$ and suitable $0<\widetilde{\omega}<\lambda$ as given in \eqref{eq:M-BD-1} and \eqref{eq:M-BD-2}.		 
	We shall prove according to Proposition \ref{prop:Anderson91}  the existence of a
	Lyapunov function ${\mathcal{M}}$ for $Z$. For this
	we shall show that suitable values $d_n$ and $m_0$ and $\alpha,\beta\in(0,1)$ exist to apply Theorem \ref{thm:ExpErgodic}.
	So $Z$ is exponentially ergodic.
	
	For $k=0$ we have ${\sum_{\ell\in K_W} v_{n}(k,\ell)=}v_n(0,1)=\nu$ for all $n\geq m_0+1$  and consequently, for validity of 
	\eqref{eq:ExpECond-2} we have  to satisfy the condition 
	\begin{equation}\label{eq:ExpECond-2-Ex1}
	\nu+\frac{1}{\widetilde{\mathcal{M}}(n)} - d_n
	\leq \beta\cdot\sigma\cdot\theta(0) \cdot d_n.
	\end{equation}
	Inserting $\theta(0)={1}/{(\nu -\sigma)}$, this is equivalent to 
	\begin{equation}\label{eq:ExpECond-2-Ex2}
	\left(\nu + \frac{1}{\widetilde{\mathcal{M}}(n)}\right)
	\cdot\left(\frac{\nu - \sigma}{\nu - \sigma(1-\beta)}\right)\leq d_n.	
	\end{equation}
	For $k=1$ we have   $r_n(1,0)=1$ and $v_n(1,0)=0$ for all $n\geq m_0+1$. Consequently, for validity of \eqref{eq:ExpECond-1}  we have to satisfy 
	the condition 
	\begin{equation}\label{eq:ExpECond-1-Ex1-prev}
	\mu \cdot\big(\theta(0)\cdot d_{n-1} - 1\big) \cdot\frac{\widetilde{\mathcal{M}}(n-1)}{\widetilde{\mathcal{M}}(n)}	 \leq
	\alpha\cdot \widetilde{\omega}.
	\end{equation}
	Because 
	\[\frac{\widetilde{\mathcal{M}}(n-1)}{\widetilde{\mathcal{M}}(n)}\leq 1,\]
	it is sufficient to have 
	\begin{equation}\label{eq:ExpECond-1-Ex1}
	\mu \cdot \big(\theta(0)\cdot d_{n-1} - 1\big) \leq
	\alpha\cdot \widetilde{\omega}.
	\end{equation}
	Inserting then $\theta(0)={1}/{(\nu -\sigma)}$, the condition 
	\eqref{eq:ExpECond-1-Ex1} is equivalent to 
	\begin{equation}\label{eq:ExpECond-1-Ex2}
	d_{n-1} \leq \left(
	\frac{\alpha \cdot \widetilde{\omega} + \mu}{\mu}\right)
	\cdot({\nu - \sigma}).		
	\end{equation}
	To show exponential ergodicity we have to find parameters
	$d_{n-1}, d_n, \alpha, \beta$ such that with
	$\nu-\sigma\in(0,\nu)$ the inequalities 
	\eqref{eq:ExpECond-1-Ex2} and \eqref{eq:ExpECond-2-Ex2} are jointly fulfilled.
	
	Tentatively, we set $d_n:=d$ for all $n\geq m_0+1$ for some suitable $d\geq 0$ to be chosen below
	and $d_{m_0}:=0$. We have to find suitable parameters for the inequalities
	\begin{equation}\label{eq:ExpECond-1+2-Ex1}
	\left(\nu + \frac{1}{\widetilde{\mathcal{M}}(n)}\right)
	\cdot\left(\frac{\nu - \sigma}{\nu - \sigma\cdot(1-\beta)}\right)
	\leq d \leq
	\left(
	\frac{\alpha \cdot\widetilde{\omega} + \mu}{\mu}\right)
	\cdot({\nu - \sigma})
	\end{equation}
	to hold concurrently. We rewrite this as
	\begin{equation}\label{eq:ExpECond-1+2-Ex2}
	\frac{\nu}{\nu - \sigma\cdot(1-\beta)} +	
	\frac{1}{\widetilde{\mathcal{M}}(n)\cdot(\nu - \sigma\cdot(1-\beta))}
	\leq \frac{d}{{\nu - \sigma}}\leq
	\frac{\alpha \cdot\widetilde{\omega} + \mu}{\mu}.
	\end{equation}
	Because we can take $\alpha$ arbitrarily close to $1$, the right side can be selected arbitrarily close to 
	$1+\frac{\widetilde{\omega}}{\mu} > 1$.		
	Because we can take $\beta$ arbitrarily close to $1$, the first term on the left side (which is greater than $1$) can be selected  arbitrarily close to $1$.
	The second term on the right side is then approximately
	$ {1}/({\widetilde{\mathcal{M}}(n)\cdot\nu})$.	
	From Corollary \ref{cor:ExpoErgodicBD}\textbf{(b)} and \textbf{(d)}  with $\widetilde{\mathcal{M}}(n)\geq n\cdot 1/(\mu-\lambda)$
	the second term can be selected arbitrarily close to $0$. 
	This selection of suitable values of $n$ leads to determining the explicit value of $m_0$, which was up to now free for our disposal.	
	Having fixed values 
	{$\alpha$, $\beta$,} $m_0$ such that the right-hand side is strictly greater than the left-hand side, we can fix $d$ such that 
	$\frac{d}{{\nu - \sigma}}$ lies strictly in between these bounds.
	
	Summarizing, this guarantees the existence of $m_0$ to define $\widetilde{F}$, $\alpha, \beta,$ and $d$
	such that the conditions \eqref{eq:ExpECond-1} and
	\eqref{eq:ExpECond-2} are satisfied in this example.
	Furthermore, \textbf{(i)} from \prettyref{thm:ExpErgodic} is satisfied because $\sigma:=\inf_{n\in \mathbb{N}_0} -v_n(0,0)=\nu >0$. 
\end{example}	

\medskip

\begin{remark}
	In Remark \ref{rem:LyaFunctExpErg} we pointed out that there is a strong coupling necessary between the dynamics of the queue and the environment, expressed in \eqref{eq:ExpECond-1} and \eqref{eq:ExpECond-2}.
	An inspection of the procedure to prove exponential ergodicity in Example \ref{ex:QueueInventory4} reveals that the following  {stronger} conditions 
	would imply \eqref{eq:ExpECond-1} and \eqref{eq:ExpECond-2}: 
	There exist $m_0\in \mathbb{N}_0$ and a  sequence of non-negative numbers 
	$(d_n:n\geq m_0)$ and constants $\alpha, \beta\in (0,1)$ and  $ 1/\widetilde{\mathcal{M}}(m_0)\leq \delta <\infty$ such that 
	\begin{align*}
	& \sum_{\ell\in K_B} \mu(n)\cdot r_{n}(k,\ell)\cdot 
	\Big(\theta_{n-1}(\ell)\cdot d_{n-1} -1\Big)_+
	+ \sum_{\ell\in K_B} v_{n}(k,\ell)\cdot 	\theta_{n}(\ell)\cdot d_{n} -1
	\leq  \alpha\cdot \widetilde{\omega}, \quad k\in K_W,\ n> m_0,\\ 
	&\sum_{\ell\in K_W} v_{n}(k,\ell) +  
	\delta - d_n
	\leq \beta\cdot \sigma_n\cdot\theta_{n}(k)\cdot d_n,~~	
	\quad k\in K_B,\ n> m_0. 
	\end{align*}
\end{remark}


\section{Bounding performance  of non-separable systems}\label{sect:BoundingNPF}
Standard performance metrics (e.g.~throughput, mean delay, mean queue lengths) of complex systems are often directly accessible if the system is separable as  in Section \ref{sect:Separable}. This relies on the fact that the mentioned metrics can be computed when the stationary distribution is explicitly at hand.
On the other hand, computing these performance metrics for non-separable systems as in Section \ref{sect:Non-separable} is often difficult.
Van Dijk reviews methods for bounding performance metrics of stochastic systems when  values of the metric of
interest  are not explicitly available (cf.~\cite[Section 1.7, p.~62.]{dijk:11b},~\cite[p.~311.]{dijk:98},~\cite{dijk;koreuioglu:92},~\cite{dijk;wal:92}).
He developed a principle to bound performance metrics of ``non-product form systems''
by the respective metrics of related ``product form systems'' and provided examples.
Closely related to the topic of Section \ref{sect:BoundingNPF} are the ``queueing systems in random environment'' with unknown stationary distribution in \cite[Theorem 3]{economou:03a}.
There the environment process is Markov of its own.
Its generator is ``perturbed'' in  a way that bi-directional interactions between queue and environment emerge. The modified system has a stationary distribution of a special ``product form''
which is different from that developed here.

Our results in Sections \ref{sect:Separable} and \ref{sect:Non-separable} suggest to develop approximation principles for queues in a random environment when separability does not hold. The main idea is to manipulate the environment  suitably. These principles should provide approximation methods 
applicable to all the examples described in the introduction. We concentrate on bounding throughputs in production-inventory systems, i.e.~the equilibrium mean number of served customers per time unit.

The stationary distribution of the production-inventory system
with perishable items in Example \ref{ex:QueueInventory2} is in general  not of product form,  see Corollary \ref{cor:QueueInventoryPerishable-PF}.
So, in case of base stock levels $b \geq  2$ closed form expressions for performance metrics are not available when the total ageing rate  depends on whether an item from inventory is already in usage for production.
In the next example we derive product form results for  modifications of the system in Example \ref{ex:QueueInventory2}.
For simplicity we consider arrival rates $\lambda$ and service rates $\mu$ independent of the queue lengths.
\medskip
\begin{example}[Separable production-inventory system with perishable items]\label{ex:QueueInventory3}
	We con\-sider  the production-inventory system of Example \ref{ex:QueueInventory1} with perishable items 
	under ageing regimes  different from that in 
	Example \ref{ex:QueueInventory2}.
	Ageing is \emph{independent} of whether an item from the inventory is already in usage by production. If there are $k>0$ items of raw material in the inventory, then the total loss rate
	of inventory due to perishing is $\gamma\cdot d(k)$, for some function $d(\cdot)$ independent of the queue length. We call the function $k\mapsto\gamma\cdot d(k)$ ageing regime.
	The production-inventory process $Z$ has  generator $\mathbf{Q}:=\left(q(z;\tilde{z}):z,\tilde{z}\in E\right)$
	with  transition rates for $(n,k)\in E$
	as follows.
	\begin{align*}
	q\left((n,k);(n+1,k)\right) & := \lambda \cdot1_{\left\{ k\in\left\{ 1,\ldots,b\right\}\right\} },\\
	q\left((n,k);(n,k-1)\right) & := \gamma\cdot d(k) \cdot 1_{\left\{ k\in\left\{ 1,\ldots,b\right\}\right\} },\\
	q\left((n,k);(n-1,k-1)\right) & := \mu \cdot1_{\left\{ n>0\right\} }\cdot1_{\left\{ k\in\left\{ 1,\ldots,b\right\}\right\} },\\
	q\left((n,k);(n,k+1)\right) & := \nu\cdot1_{\left\{k \in\left\{ 0,1,\ldots,b-1\right\}\right\} },
	\end{align*}
	and $q(z;\tilde{z}):= 0$ for other $z\neq\tilde{z}$,
	and
	$q\left(z;z\right):= -\sum_{\substack{\tilde{z}\in E,\\
			z\neq\tilde{z}} }q\left(z;\tilde{z}\right)$ for all $z\in E.$
	
	This production-inventory system with exponentially distributed
	lifetimes of items in the inventory fits
	into the definition of the queueing system in a random environment by setting
	$K:= \left\{0,1,\ldots,b\right\},
	K_{B}:= \left\{0\right\}, K_{W}:= \left\{1,\ldots,b\right\},$
	and
	\begin{align}\label{eq:v-perish-separable}
	r_{n}(0,0)&:= 1,\qquad r_{n}(k,k-1):= 1,\quad1\leq k\leq b,\ n\in\mathbb{N},
	\end{align}
	and $r_n(k,\ell):=0$ for other $k,\ell\in K,\ n\in\mathbb{N}$,
	\begin{align}
	v_{n}(k,\ell) & := \begin{cases}
	\nu, & \text{if } k\in\left\{ 0,1,\ldots,b-1\right\},\ \ell=k+1,\\
	\gamma\cdot d(k), & \text{if }k\in\left\{ 1,\ldots,b\right\},\ \ell=k-1, \qquad\quad n\geq0,\\
	0, & \text{otherwise for }k\neq\ell,
	\end{cases}
	\end{align}
	and $v_n\left(k,k\right):= -\sum_{\substack{\ell\in K,\\
			k\neq\ell} }v_n\left(k,\ell\right)$ for all $k\in K$ and $n\geq 0$.
	
	If $\lambda<\mu$, the production-inventory process is ergodic, and the stationary distribution $\pi$ is
	\begin{equation}\label{eq:perish-separableSteadyState-1}
	\pi(n,k):= \xi(n)\cdot \theta (k), \ (n,k)\in E,
	\end{equation}
	with suitable normalization constant  $C_{\theta}$ for $\theta$ and
	\begin{align}\label{eq:perish-separableSteadyState-2}
	\xi(n):= \left( 1- \frac{\lambda}{\mu}\right) \cdot \left(\frac{\lambda}{\mu}\right)^n,   
	\  n\in \mathbb{N}_{0},\qquad
	\theta(k):= C_{\theta}^{-1} \cdot \prod_{\ell =0}^{k-1} \frac{\nu}{\lambda + d(\ell +1 )}, \  k\in K.
	\end{align}
\end{example}	
\medskip
We will show that with varying function $d(\cdot)$
this product form result can  be used to bound throughputs for the systems with (unknown) non-product form stationary distribution of Example \ref{ex:QueueInventory2}.
We construct bounding systems according to Example \ref{ex:QueueInventory3} with the same rates $\lambda$ and $\mu$, environment $K=\left\{0,1,\ldots,b\right\},
K_{B}=\left\{0\right\}, K_{W}=\left\{1,\ldots,b\right\},$ and 
jump matrices $R_n$ and different specifications of $d(\cdot)$ in the rate matrices \eqref{eq:v-perish-separable}.
These systems are ergodic because of \eqref{eq:perish-separableSteadyState-1} and \eqref{eq:perish-separableSteadyState-2}.

\emph{A lower bound ``$-$''-system:} The  ageing regime in state $(m,k)\in E$ is $k\mapsto d^-(k) := \gamma\cdot k$. This means that all items are perishable - even
the one already reserved for production.

\emph{An upper bound ``$+$''-system:} The  ageing regime in state $(m,k)\in E$ is $k\mapsto d^+(k) := \gamma\cdot(k-1)_{+}$.
This means that one item in the inventory (if there is any) is not subject
to ageing -- even if the server is idling and no item is reserved for production.

\emph{The target  ``$o$''-system} is the production-inventory system with unknown non-product form
stationary distribution from  Example \ref{ex:QueueInventory2}.
Perishing of items depends on whether an item
from the inventory is in usage by production (i.e.~the server
is busy, or if the server is idling and the inventory is not   empty). The ageing regime in
state $(m,k)\in E$ is $k\mapsto d^o(k):=(\gamma\cdot k)\cdot1_{\left\{ n=0\right\} }+(\gamma\cdot(k-1))\cdot1_{\left\{ n>0\right\} }.$
This system is ergodic by Corollary \ref{cor:BS-PER-GER-pos-rec-2} and Example \ref{ex:random-bsp-perish}\textbf{(b)}.

In the following we write ``$\star$''-system for one of the systems specified by ``+'', ``-'', or ``o''.
With stationary distribution $\pi^{\star}:= (\pi^{\star}(m,k):(m,k)\in E)$ in all cases the throughput is
\begin{align}
TH^{\star}:=& \sum_{(m,k)\in E}\pi^{\star}(m,k)\cdot\mu\cdot1_{\left\{ m>0\right\} }\cdot1_{\left\{ k>0\right\} } 
= \sum_{m=1}^{\infty} \sum_{k=1}^{b} \left( 1- \frac{\lambda}{\mu} \right) 
\left(\frac{\lambda}{\mu}\right)^{m}
\cdot C_ {\theta^{\star}}^{-1}
\cdot \left(  \prod_{\ell =0}^{k-1} \frac{\nu}{\lambda + d^{\star}(\ell +1)} \right)
\cdot \mu  \nonumber\\
&= \sum_{k=1}^{b} C_ {\theta^{\star}}^{-1}
\cdot \left(  \prod_{\ell =0}^{k-1} \frac{\nu}{\lambda + d^{\star}(\ell +1)} \right) 
\cdot \lambda 
\cdot \sum_{m=1}^{\infty}   \left(\frac{\lambda}{\mu}\right)^{m-1}
\cdot\left( 1- \frac{\lambda}{\mu} \right)  \nonumber\\
&= \lambda \cdot C_ {\theta^{\star}}^{-1} 
\cdot \sum_{k=1}^{b} 
\left(  \prod_{\ell =0}^{k-1} \frac{\nu}{\lambda + d^{\star}(\ell +1)} \right)  \nonumber\\
&= \lambda \cdot P(Y^{\star}>0) 
=  \lambda \cdot \left(1-  P(Y^{\star}=0) \right) 
= \lambda\cdot(1 - C_ {\theta^{\star}}^{-1}). \label{eq:TH-star}
\end{align}

Following  \cite{vanderwal:89} we define Markov reward processes to determine throughputs.
$w^\star_n(m,k)$ counts the number of departures from the $\star$-system up to the time of the $n$-th jump of the process if the initial state is $(m,k)\in E$.
So $w^\star_n(m,k)/n$ is the finite time throughput up to the time of the $n$-th jump of the process started in  $(m,k)\in E$.	
It holds  \cite[Lemma 2]{vanderwal:89}:
\begin{equation}\forall (m,k)\in E:\quad          
TH^{\star}= \lim\limits_{n\to\infty} \frac{1}{n} w^\star_n(m,k). 
\label{eq:THo-1-1-StarInfty}
\end{equation}

Our starting point is the observation that for all $(n,k)\in E$ the ageing regimes are ordered:
\[
\gamma\cdot(k-1)_{+}\leq(\gamma\cdot k)\cdot1_{\left\{ n=0\right\} }+(\gamma\cdot(k-1))\cdot1_{\left\{ n>0\right\} }\leq\gamma\cdot k.
\]

We expect that the throughputs of the respective
systems are ordered the other way round. 
An intuitive explanation of this throughput ordering is:
If we have either more inventory and at least the same number of customers
in the system, or more customers in the system and at least the same stock
size of the inventory, then the system should be able to produce more output.
This leads to our following conjecture.
\label{page:Conj}
\begin{conjecture}[{Monotonicity of throughputs}]\label{conj:Monotonicity}Consider three ergodic production-inven\-tory systems with the same arrival rate $\lambda$,
	service rate $\mu$, replenishment rate $\nu$, and individual ageing
	rate $\gamma$ for items in the inventory which are subject to ageing.\\
	Then the following monotonicity property for the throughputs holds
	\begin{equation}\label{eq:THOrdering}
	TH^{-}\leq TH^{o}\leq TH^{+}.
	\end{equation}	
\end{conjecture}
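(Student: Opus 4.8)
\emph{Plan.} The plan is to reduce the throughput comparison to a comparison of the environment marginals, settle the two separable endpoints by an explicit computation, and handle the non-separable middle term by a Markov-reward (perturbation) comparison in the spirit of van Dijk; the one step that resists a complete treatment --- and the reason the statement is phrased as a conjecture --- is a structural monotonicity property of the bias function.

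\emph{Step 1 (reduction).} By its definition \eqref{eq:THo-1-1-Star}, $TH^{\star}=\pi^{\star}(r)$ with the bounded reward rate $r(n,k)\coloneqq \mu\cdot 1_{\{n>0\}}\cdot 1_{\{k>0\}}$; equivalently, balancing the steady-state up- and down-flow of the queue length, $TH^{\star}=\lambda\cdot\pi^{\star}(\{Y>0\})$. For the two separable endpoints this equals $\lambda\bigl(1-\theta^{\star}(0)\bigr)$ with $\theta^{\star}$ from \eqref{eq:perish-separableSteadyState-2}; since $\lambda+\gamma(\ell+1)\ge\lambda+\gamma\ell$ for every $\ell\ge 0$, the partial products $\prod_{\ell=0}^{k-1}\nu/\bigl(\lambda+\gamma d^{\star}(\ell+1)\bigr)$ decay faster for the ``$-$''-system ($d^{-}(j)=j$) than for the ``$+$''-system ($d^{+}(j)=(j-1)_{+}$), so $\theta^{-}$ is stochastically dominated by $\theta^{+}$, whence $\theta^{-}(0)\ge\theta^{+}(0)$ and $TH^{-}\le TH^{+}$. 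It remains to squeeze $TH^{o}$ between the two.

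\emph{Step 2 (Markov-reward comparison).} Let $h^{+}$ solve the Poisson equation $\mathbf{Q}^{+}h^{+}=TH^{+}\cdot\mathbf{1}-r$ for the ``$+$''-system (a solution exists and is $\pi^{o}$-integrable because $|K|<\infty$ and $\lambda<\mu$ make both chains geometrically ergodic with a queue-length bias growing at most linearly). Multiplying by $\pi^{o}$ and using $\pi^{o}\mathbf{Q}^{o}=0$ gives
\begin{equation}
TH^{+}-TH^{o}=\sum_{(n,k)\in E}\pi^{o}(n,k)\bigl((\mathbf{Q}^{+}-\mathbf{Q}^{o})h^{+}\bigr)(n,k).
\label{eq:conj-vanDijk}
\end{equation}
Since $\mathbf{Q}^{+}$ and $\mathbf{Q}^{o}$ differ only in the perishing transition $(n,k)\to(n,k-1)$, the summand equals $\gamma\bigl(d^{o}(n,k)-(k-1)_{+}\bigr)\cdot\bigl(h^{+}(n,k)-h^{+}(n,k-1)\bigr)$ for $k\ge 1$ and vanishes for $k=0$, where $d^{o}(n,k)=k\cdot 1_{\{n=0\}}+(k-1)\cdot 1_{\{n>0\}}$ and the ageing-regime ordering yields $d^{o}(n,k)\ge(k-1)_{+}$. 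Hence \eqref{eq:conj-vanDijk} is $\ge 0$ --- that is, $TH^{+}\ge TH^{o}$ --- provided
\begin{equation}
h^{+}(n,k)\ \ge\ h^{+}(n,k-1)\qquad\text{for all }(n,k)\in E\text{ with }k\ge 1 .
\label{eq:conj-bias}
\end{equation}
Repeating the computation with the Poisson solution $h^{-}$ of the ``$-$''-system (where $d^{-}(k)=k\ge d^{o}(n,k)$, so the sign in the analogue of \eqref{eq:conj-vanDijk} flips) shows $TH^{o}\ge TH^{-}$ under the same monotonicity \eqref{eq:conj-bias} for $h^{-}$.

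\emph{Step 3 (the obstacle).} What remains is \eqref{eq:conj-bias}: the bias (relative value) function of a separable queueing-inventory system with perishing is non-decreasing in the inventory level. This is a pathwise assertion about a single model --- couple two copies started from $(n,k)$ and $(n,k-1)$ by common Poisson clocks for arrivals, service completions and replenishments plus a synchronised coupling of perishing, and show that the stock-richer copy accrues reward at least as fast in integrated expectation. A direct coupling stalls here: at a service completion the stock-richer copy may serve a customer, consuming one item and one customer, while the stock-poorer copy, having just stocked out, cannot, so the queue-length coordinates become incomparable and the inventory advantage can later be eroded by arrivals one copy admits and the other rejects. I expect this to be the main difficulty; plausible ways around it are a more refined (lexicographic or delayed) coupling, convexity/submodularity of the value function extracted from value iteration, or direct verification for small base stock levels ($b=1$ is already settled by Corollary \ref{cor:QueueInventoryPerishable-PF}) together with a saturated-queue limit.
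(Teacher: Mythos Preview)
Your proposal correctly recognises that the statement is open; the paper itself does not prove it, but records it as a conjecture and then lists partial evidence. So there is no complete proof in the paper to match against, only a reduction and a catalogue of special cases.

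Your route via the Poisson equation and the van Dijk perturbation identity is a different packaging of the same underlying idea the paper uses. The paper works with the finite-horizon cumulative throughput $v^{\star}_n(m,k)$ (number of departures up to the $n$-th jump, starting from $(m,k)$) and shows in Proposition~\ref{prop:isotone} that if $v^{\star}_n$ is isotone in the product order on $\mathbb{N}_0\times\{0,\dots,b\}$ for one of the three systems, the corresponding throughput inequality follows. Your requirement---monotonicity of the bias function of a separable endpoint system in the inventory coordinate---is the infinite-horizon counterpart. Both reductions localise the difficulty in the same place: a structural monotonicity of a value-type function that a naive coupling fails to deliver, for exactly the reason you diagnose in Step~3 (service consumes inventory \emph{and} a customer, so the queue-length order can flip).

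Where the paper goes further than your outline is in identifying regimes in which the monotonicity obstacle can actually be cleared: the case $b=1$ (via Corollary~\ref{cor:QueueInventoryPerishable-PF}), the lower bound $TH^{-}\le TH^{o}$ whenever $\lambda\le\gamma$, and the full sandwich $TH^{-}\le TH^{o}\le TH^{+}$ when $\mu=\gamma$; the isotonicity proofs (in~\cite{otten:18}) follow the van der Wal scheme you allude to. It would strengthen your proposal to check whether your Poisson-equation formulation recovers these partial results. One observation that may help: since $\mathbf{Q}^{+}$ and $\mathbf{Q}^{o}$ differ only on the level $\{n=0\}$, your bias-monotonicity condition for the upper bound is needed only at queue length zero, which looks narrower than the two-coordinate isotonicity the paper imposes; for the lower bound, by contrast, $\mathbf{Q}^{-}$ and $\mathbf{Q}^{o}$ differ on all of $\{n>0\}$, so you need monotonicity there.
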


\noindent At present we cannot provide a complete proof of the conjecture. In the following propositions we identify and prove special cases of  \eqref{eq:THOrdering} which support the conjecture.

\begin{proposition}\label{prop:R1}
	For the three systems described in Conjecture \ref{conj:Monotonicity} with base stock level $b=1$ the throughput ordering \eqref{eq:THOrdering} 	is true.
	
\end{proposition}
\proof
With stationary distributions from Corollary \ref{cor:QueueInventoryPerishable-PF} (for "o"-system) and 
Example \ref{ex:QueueInventory3} (\eqref{eq:TH-star}  for "$-$"- and "$+$"-system), the throughputs can be computed   explicitly and are obviously ordered
\begin{align}
TH^{-}=\frac{\lambda \cdot \mu}{\lambda  +\nu +\gamma}<
TH^{o}=\frac{\lambda \cdot \mu}{\lambda  +\nu +\gamma \cdot  \left(1- \frac{\lambda}{\mu} \right) }<
TH^{+}=\frac{\lambda \cdot \mu}{\lambda +\nu}. \nonumber
\end{align}
\endproof
\begin{proposition}\label{prop:R2}
	For "$-$"- and "$+$"-systems in Conjecture \ref{conj:Monotonicity} 
	it always holds $TH^{-}<TH^{+}$. 
\end{proposition}	
\proof
From \prettyref{eq:TH-star} it follows
\begin{align}
TH^{+}-TH^{-}&=\lambda \cdot  \left( P(Y^{-}=0) - P(Y^{+}=0) \right)
= \lambda \cdot \left( C_ {\theta^{-}}^{-1} - C_ {\theta^{+}}^{-1} \right)\nonumber\\
&=\lambda \cdot \left[  
\left( \sum_{k=0}^{b} 
\left(  \prod_{\ell =0}^{k-1} \frac{\nu}{\lambda + \gamma \cdot (\ell +1)} \right) \right)^{-1}
-\left( \sum_{k=0}^{b} 
\left(  \prod_{\ell =0}^{k-1} \frac{\nu}{\lambda + \gamma \cdot \ell} \right) \right)^{-1}
\right].\nonumber
\end{align}
For $k=0,1,\ldots ,b$ it holds for $\nu,\lambda,\gamma>0$
\begin{align}
\prod_{\ell =0}^{k-1} \frac{\nu}{\lambda + \gamma \cdot (\ell +1)}
< \prod_{\ell =0}^{k-1} \frac{\nu}{\lambda + \gamma \cdot \ell }~, \nonumber
\end{align}
which implies 
$C_ {\theta^{-}}^{-1} \leq C_ {\theta^{+}}^{-1}$  and $TH^{+} -TH^{-} >0$.
\endproof

\begin{proposition}\label{prop:isotone}
	Consider the ergodic production-inventory systems ``+'', ``$-$'', and ``o'' from Conjecture \ref{conj:Monotonicity} 
	with the same rates $\lambda,\mu,\gamma$,  and $\nu$.
	We say that the reward function $w^\star_n$ is isotone with respect to the product order on $\mathbb{N}_0\times\{0,1,\dots,b\}$ if
	\begin{equation}
	\forall (m,k), (m',k')\in E:[m\leq m' \wedge k\leq k']~\text{implies}~ [w^\star_n(m,k)\leq  w^\star_n(m',k')~\forall n\in\mathbb{N}_0].\nonumber
	\end{equation}	
	
	Then the following statements hold (see \cite[Proposition 4.3.11]{otten:18}):
	\begin{enumerate}
		\item[\textbf{(a)}] If $w^-_n$ is isotone for all $n\in\mathbb{N}$, then $TH^-\leq TH^o$.
		\item[\textbf{(b)}] If $w^+_n$ is isotone for all $n\in\mathbb{N}$, then $TH^o\leq TH^+$.	
		\item[\textbf{(c)}] If $w^o_n$ is isotone for all $n\in\mathbb{N}$, then  $TH^-\leq TH^o\leq TH^+$.
	\end{enumerate}
\end{proposition}

\proof
\textbf{(a)} If $w^-_n$ is isotone, then for all $ (m,k)\in E$ it holds $w^-_n(m,k)\leq  w^o_n(m,k)$ for all $n\in\mathbb{N}$
(see \prettyref{lem:isotone-1} in Appendix). 
From \eqref{eq:THo-1-1-StarInfty} it follows $TH^-\leq TH^o$.\\
\textbf{(b)} If $w^+_n$ is isotone, then 	for all $ (m,k)\in E$ it holds 
$	w^o_n(m,k)\leq  w^+_n(m,k)$ for all $ n\in\mathbb{N}$ (see \prettyref{lem:isotone-1} in Appendix).
From \eqref{eq:THo-1-1-StarInfty} it follows	 $TH^o\leq TH^+$.\\
\textbf{(c)} If $w^o_n$ is isotone, then  
for all $ (m,k)\in E$ it holds 	$ 
w^-_n(m,k)\leq w^o_n(m,k)\leq  w^+_n(m,k)$ for all $n\in\mathbb{N}$ (see \prettyref{lem:isotone-1} in Appendix).
From \eqref{eq:THo-1-1-StarInfty} it follows	 $TH^-\leq TH^o\leq TH^+$.
 
\endproof

\begin{proposition}\label{prop:isotoneCases}
	Consider the ergodic production-inventory systems ``+'', ``$-$'', and ``o'' from Conjecture \ref{conj:Monotonicity} 
	with the same rates $\lambda,\mu,\gamma$,  and $\nu$. Then
	\begin{enumerate}
		\item[\textbf{(a)}]
		$\lambda\leq \gamma$ implies $TH^{-}\leq TH^{o}$, and
		\item[\textbf{(b)}]
		$\mu=\gamma$ implies $TH^{-}\leq TH^{o}\leq TH^{+}$.
	\end{enumerate}
	
\end{proposition}
\proof
Utilizing ideas of  \cite{vanderwal:89},  it can be shown (see \prettyref{lem:isotone-2} in the Appendix):\\
\textbf{(a)} 	
For the "$-$"-system $\lambda\leq \gamma$ implies that $w_n^-$ is isotone. So Proposition \ref{prop:isotone}\textbf{(a)}  yields
$TH^{-}\leq TH^{o}$.\\
\textbf{(b)}
For the "$o$"-system $\mu=\gamma$ implies  that $w_n^o$ is isotone.
So Proposition \ref{prop:isotone}\textbf{(c)}  yields
$TH^{-}\leq TH^{o}\leq TH^{+}$.	
\endproof

In the following we assume that Conjecture \ref{conj:Monotonicity} holds and  derive analytically upper  bounds for the error when using the suggested  approximations $TH^{-}$ or $TH^{+}$  for  $TH^{o}$ 
by a worst-case	analysis.

We consider $TH^{\star}$ as a function of $b\in \mathbb{N}$. Then for the absolute error of the bounds it holds
\begin{align}
\begin{rcases}
|TH^{o}(b)-TH^{-}(b)|=TH^{o}(b)-TH^{-}(b)\\
|TH^{o}(b)-TH^{+}(b)|=TH^{+}(b)-TH^{o}(b) 
\end{rcases}
\leq TH^{+}(b)-TH^{-}(b) \eqqcolon AE(b), \quad b\in \mathbb{N}, \label{eq:AEb}
\end{align}
and for the relative error of the bounds it holds
\begin{align}
\begin{rcases}
\displaystyle{\frac{|TH^{o}(b)-TH^{-}(b)|}{TH^{o}(b)}}\\
\displaystyle{\frac{|TH^{o}(b)-TH^{+}(b)|}{TH^{o}(b)}}\\
\end{rcases}
\leq \frac{TH^{+}(b)-TH^{-}(b)}{TH^{-}(b)} \eqqcolon RE(b),\quad b\in \mathbb{N}. \label{eq:REb}
\end{align}
With
\begin{align}
\beta_{b}(f)\coloneqq 
\sum_{k=1}^{b} \prod_{\ell =0}^{k-1} \left( \frac{\nu}{\lambda +\gamma \cdot (\ell + f) } \right),\quad f\in\{0,1\},  \label{eq:beta}
\end{align}
we get
\begin{align}
RE(b)
&\overset{\prettyref{eq:REb}}{=}\frac{TH^{+}(b)-TH^{-}(b)}{TH^{-}(b)}
\overset{\prettyref{eq:TH-star}}{=}\frac{C_{\theta^{-}}^{-1} - C_{\theta^{+}}^{-1}}{1-C_{\theta^{-}}^{-1}}
=\frac{C_{\theta^{+}} - C_{\theta^{-}}}{C_{\theta^{+}} \cdot  \left( C_{\theta^{-}} -1\right) }
\overset{\prettyref{eq:beta}}{=} \frac{\beta_{b} (0) - \beta_{b}(1)}{  \left(1+\beta_{b}(0)\right) \cdot \beta_{b}(1) }
\nonumber
\end{align}
and
\begin{align}
AE(b)&\overset{\prettyref{eq:AEb}}{=} TH^{+}(b)-TH^{-}(b) 
\overset{\prettyref{eq:TH-star}}{=}\lambda \cdot \left( C_{\theta^{-}}^{-1} - C_{\theta^{+}}^{-1} \right)
= \lambda  \cdot \left( \frac{ C_{\theta^{+}} - C_{\theta^{-}} }{ C_{\theta^{-}} \cdot C_{\theta^{+}} }  \right) \nonumber \\
&\overset{\prettyref{eq:beta}}{=} \lambda \cdot \left( \frac{ \beta_{b}(0) - \beta_{b}(1) }{ \left(1+\beta_{b}(1)\right) \cdot \left(1+\beta_{b}(0)\right) }  \right). \nonumber
\end{align}

To assess the quality of these rough approximations we have investigated several scenarios and provide numerical outputs of the absolute and relative errors below in Figures \ref{fig:AE-fast}--\ref{fig:RE-slow}. Two preliminary facts are immediate: Under scaling of all rates with the same factor $a\in (0,\infty): \lambda\to a\cdot \lambda,$ etc. concurrently, the terms $\beta_{b}(f)$ are invariant, and therefore the relative error $RE(b)$ is scale invariant, and the absolute error $AE(b)$ scales linear in $a$ (with a constant which is scale invariant).

\begin{figure}[h]
	\begin{minipage}[b]{.49\linewidth} 
		\includegraphics[width=\linewidth]{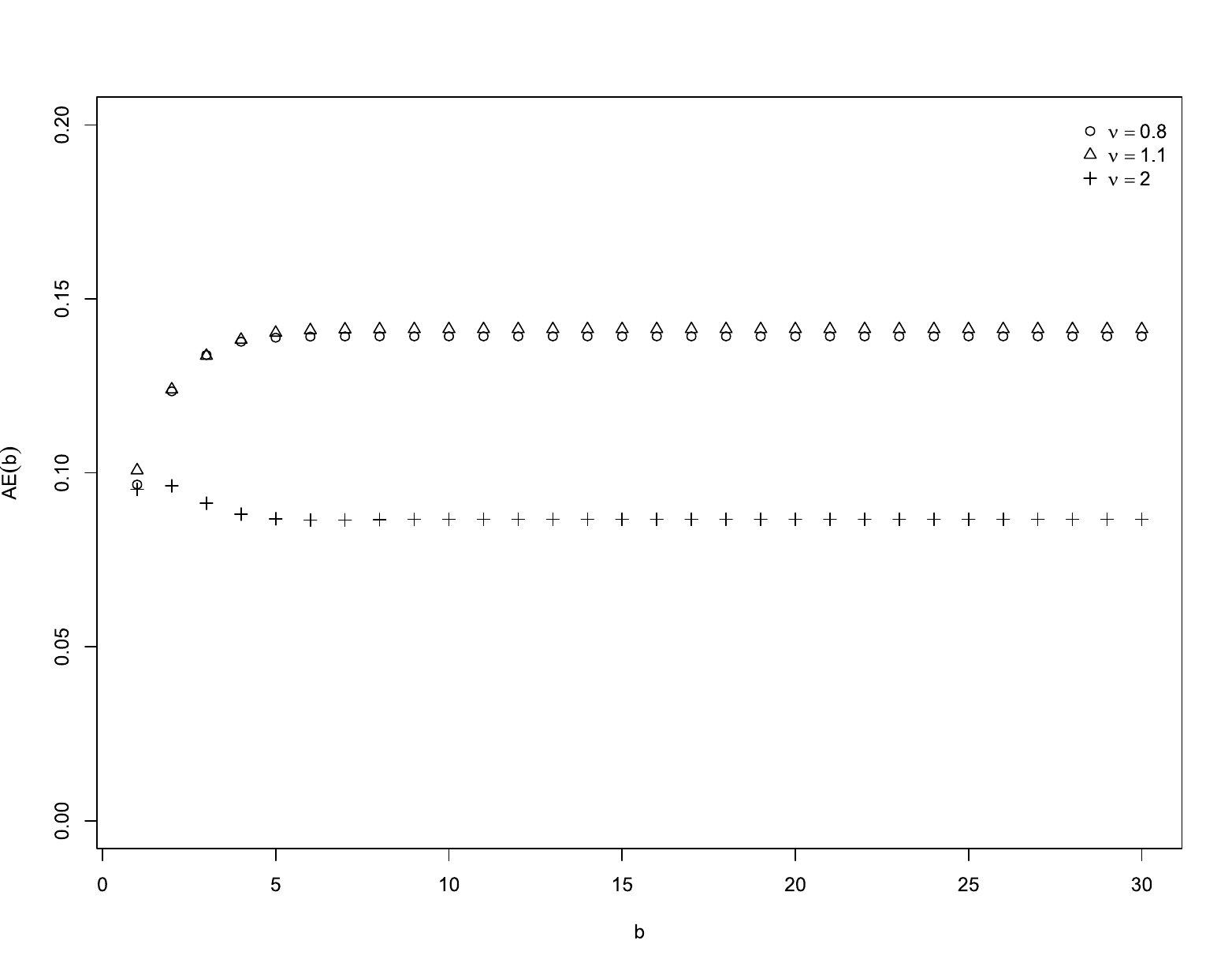}
		\caption{\label{fig:AE-fast}$AE(b)$ for fast perishing $\gamma=0.5$}
	\end{minipage}
	\hspace{.0\linewidth}
	\begin{minipage}[b]{.49\linewidth} 
		\includegraphics[width=\linewidth]{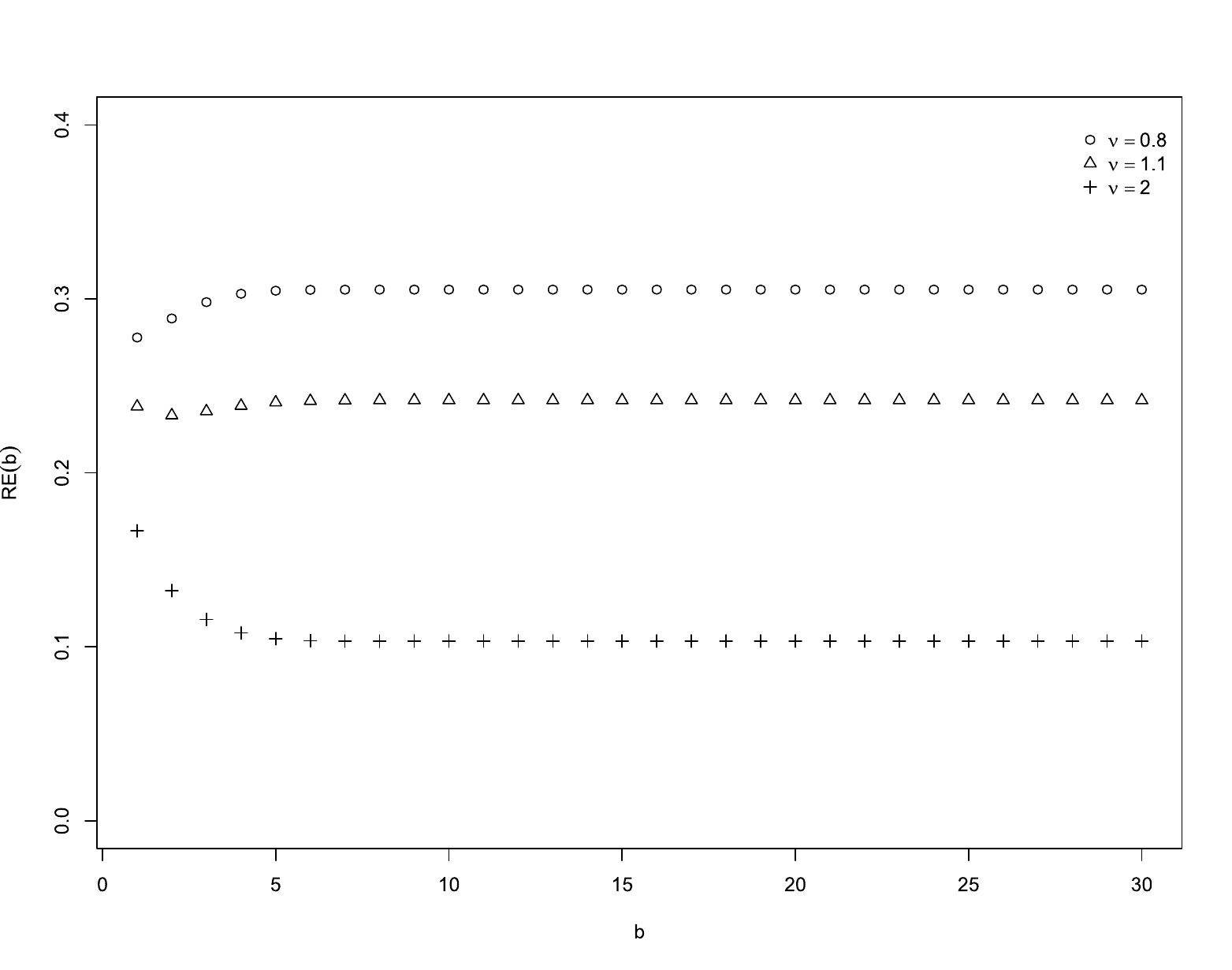}
		\caption{\label{fig:RE-fast}$RE(b)$ for fast perishing $\gamma=0.5$}
	\end{minipage}
\end{figure}
\begin{figure}[h]
	\begin{minipage}[b]{.49\linewidth} 
		\includegraphics[width=\linewidth]{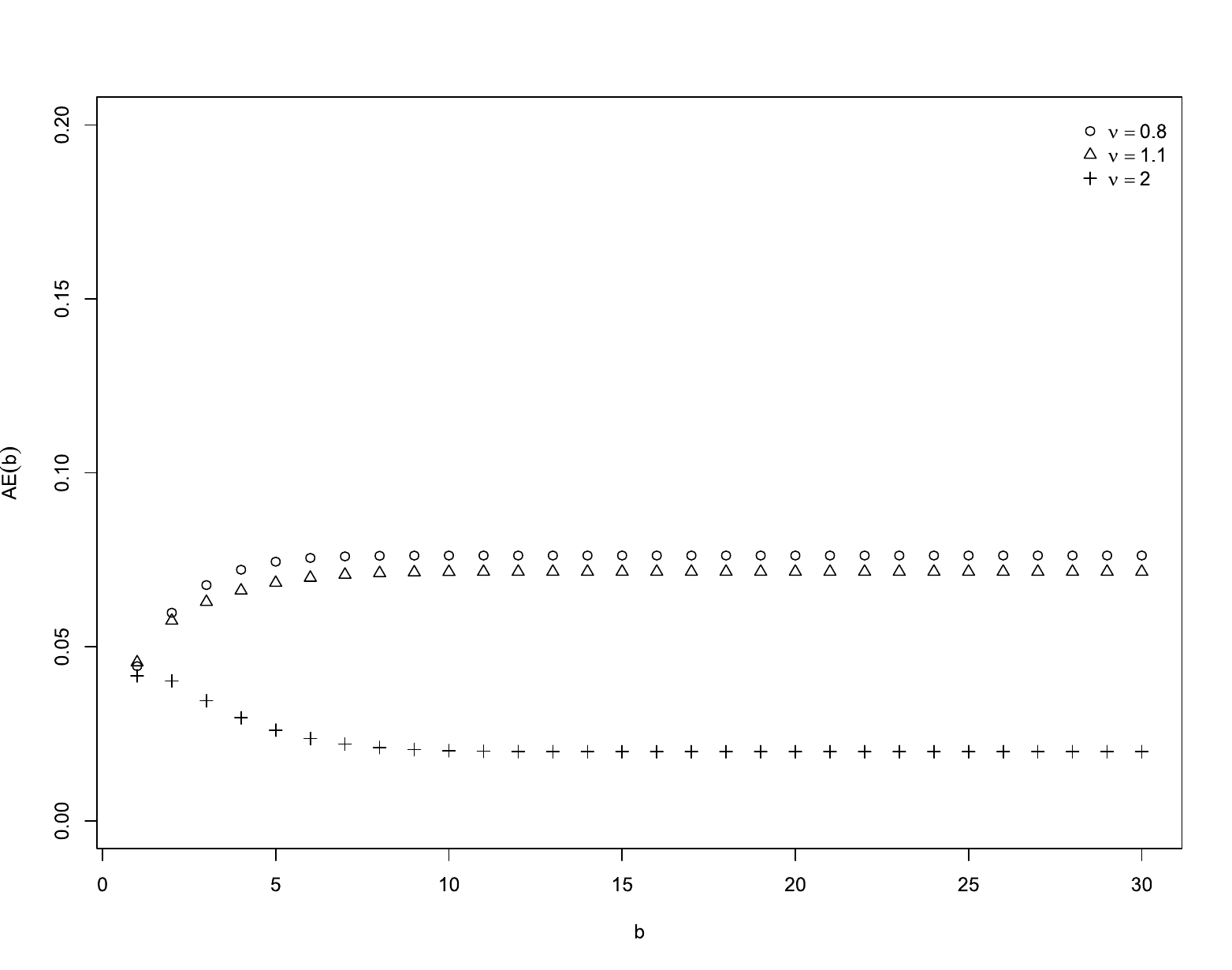}
		\caption{\label{fig:AE-moderate}$AE(b)$ for moderate perishing $\gamma=0.2$}
	\end{minipage}
	\hspace{.0\linewidth}
	\begin{minipage}[b]{.49\linewidth} 
		\includegraphics[width=\linewidth]{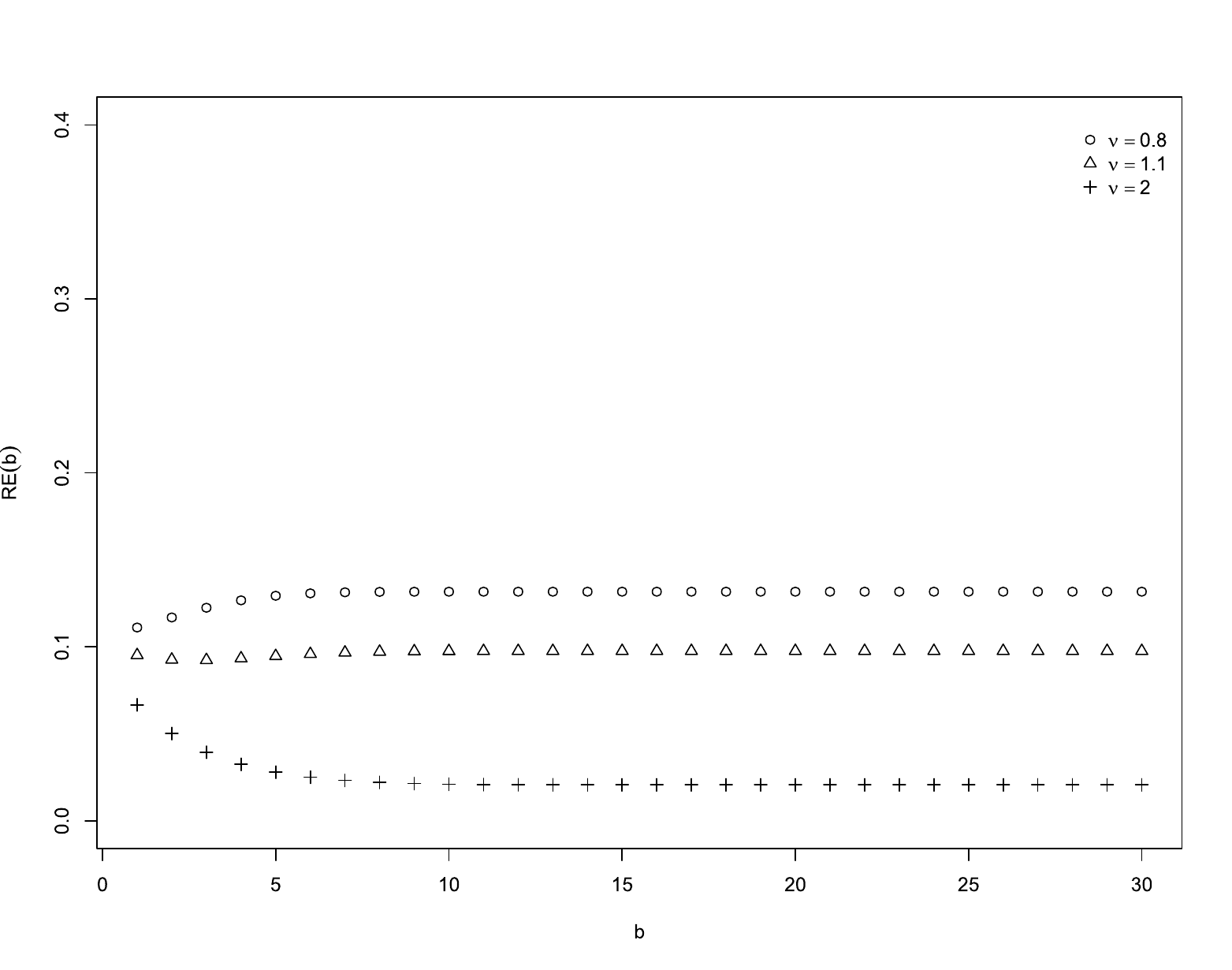}
		\caption{\label{fig:RE-moderate}$RE(b)$ for moderate perishing $\gamma=0.2$}
	\end{minipage}
\end{figure}
\begin{figure}[h]
	\begin{minipage}[b]{.49\linewidth} 
		\includegraphics[width=\linewidth]{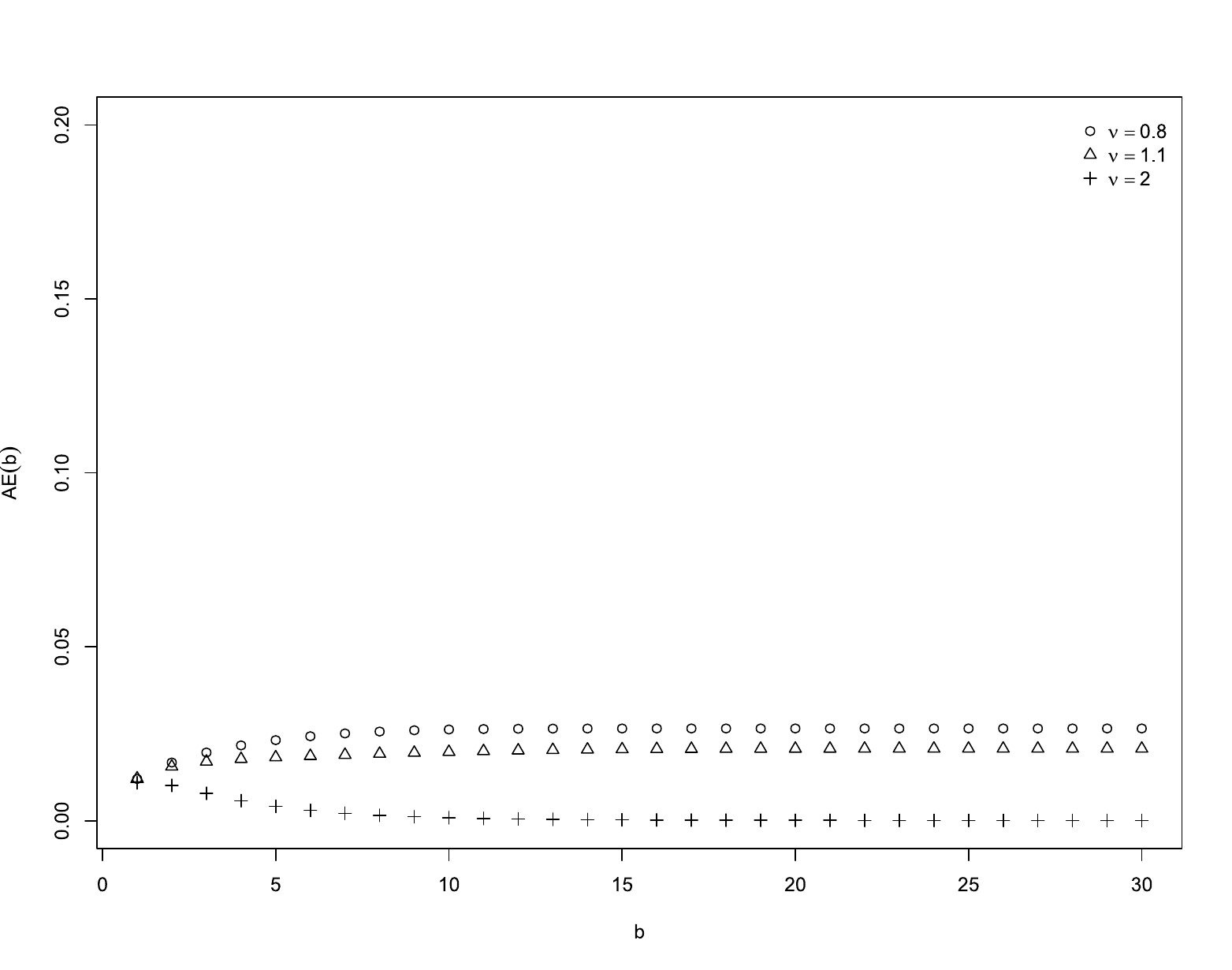}
		\caption{\label{fig:AE-slow}$AE(b)$ for slow perishing $\gamma=0.05$}
	\end{minipage}
	\hspace{.0\linewidth}
	\begin{minipage}[b]{.49\linewidth} 
		\includegraphics[width=\linewidth]{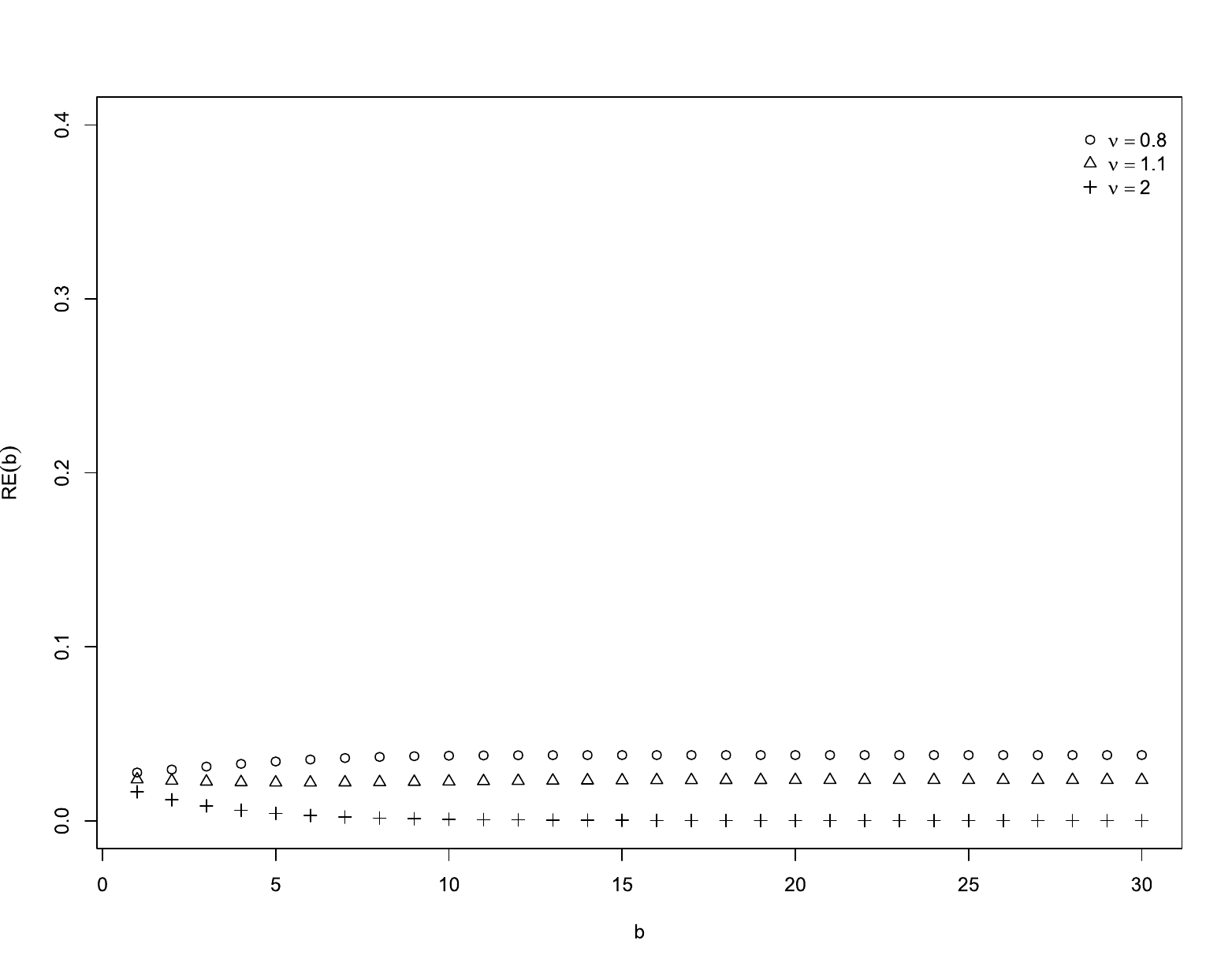}
		\caption{\label{fig:RE-slow}$RE(b)$ for slow perishing $\gamma=0.05$}
	\end{minipage}
\end{figure}

In realistic scenarios the replenishment rate $\nu$ should guarantee that enough inventory is available to allow servicing of a reasonable portion of arrivals, i.e.~we assume $\nu$
is at least of the order of $\lambda$. Moreover, the  individual perishing rate $\gamma$
should be less then the arrival rate.

Exploiting the scale invariance of $RE(b)$ and of the linear factor of $AE(b)$, we henceforth fix $\lambda =1$ and consider three scenarios with (1) fast perishing $\gamma = 0.5$, 
(2) moderate perishing $\gamma = 0.2$, and (3) slow perishing $\gamma = 0.05$.
In any scenario we consider replenishment rates $\nu= 0.8/1.1/2$.

We have included "fast perishing" in order to show that our approach is not a panacea. We believe that this extreme case is not a realistic scenario.

From Figures \ref{fig:AE-moderate}--\ref{fig:RE-slow} we see that in the moderate and slow perishing case the relative error is below 14\%. Because the bounds for the approximation errors are very rough (``worst cases''), these maximal deviations of the product form bounds from the non-product form target value
indicated by the experiments are astonishingly small.
%

\section{Conclusion}\label{sect:Conclusion}
For a large class of queueing-environment systems with bi-directional interaction we have obtained product form steady state distributions. These explicit steady state distributions open the possibility to compute performance metrics of the systems explicitly.

If this direct access is not possible, we developed ergodicity and exponential ergodicity criteria using Lyapunov functions and demonstrated by an example of a coupled production-inventory system how to compute two-sided bounds for the throughput. This indicates how to proceed in cases of other queueing-environment systems to obtain explicit bounds  for performance indices which are not directly accessible.
Another direction of research on non-separable queueing-environment systems is to modify the transition rate matrix of the system suitably (possibly only the environment coordinate) to obtain a product form approximation of the stationary distribution.
This is part of our ongoing research. 

\section*{Acknowledgements}
We thank the associated editor of \emph{Stochastic Systems} and two reviewers for their careful reading of previous versions and their constructive criticism which
enhanced the article. 


\section*{Appendix}
\begin{lemma}\label{lem:isotone-1}
	Consider the  ergodic production-inventory systems ``+'', ``$-$'', and ``o'' from Conjecture \ref{conj:Monotonicity}
	above
	with the same rates $\lambda,\mu,\gamma$,  and $\nu$.
	\begin{enumerate}
		\item[\textbf{(a)}] If $w^-_n$ is isotone, then for all $ (m,k)\in E$  it holds
		$w^-_n(m,k)\leq  w^o_n(m,k)$ for all $n\in\mathbb{N}.$
		\item[\textbf{(b)}] If $w^+_n$ is isotone, then	for all $ (m,k)\in E$ it holds
		$w^o_n(m,k)\leq  w^+_n(m,k)$ for all $ n\in\mathbb{N}.$
		\item[\textbf{(c)}] If $w^o_n$ is isotone, then  for all $ (m,k)\in E$  it holds
		$w^-_n(m,k)\leq  w^o_n(m,k)\leq  w^+_n(m,k)$ for all $ n\in\mathbb{N}.$
	\end{enumerate}
\end{lemma}

\proof
We proceed by induction over the number of jumps of the uniformization
chains $Z_{u}^{-},Z_{u}^{o},Z_{u}^{+}$ and compare the respective
cumulative rewards. By definition we have in any case
\[
w_{1}^{-}(m,k)=w_{1}^{o}(m,k)=w_{1}^{+}(m,k)=r(m,k)\quad\forall(m,k)\in E.
\]
Assume that for some $n\geq1$ it holds
\[
w_{n}^{-}(m,k)\leq w_{n}^{o}(m,k)\leq w_{n}^{+}(m,k),\quad\forall(m,k)\in E.
\]
To perform the induction step we have to show
\[
w_{n+1}^{-}(m,k)\leq w_{n+1}^{o}(m,k)\leq w_{n+1}^{+}(m,k),\quad\forall(m,k)\in E.
\]
By $w_{n+1}^{*}=r+R^{*}\cdot w_{n}^{*}$ for $*\in\{o,-,+\}$ this reduces to
\[
(R^{-}w_{n}^{-})(m,k)\leq(R^{o}w_{n}^{o})(m,k)\leq(R^{+}w_{n}^{+})(m,k),\quad\forall(m,k)\in E.
\]
Let $\alpha:=\lambda+\mu+\nu+\gamma\cdot b$. For states $(m,0),$ $m\in\mathbb{N}_{0}$, we have for $*\in\{o,-,+\}$
\[
(R^{*}w_{n}^{*})(m,0)=\frac{\nu}{\alpha}\cdot w_{n}^{*}(m,1)+\frac{\lambda+\mu+\gamma\cdot b}{\alpha}\cdot w_{n}^{*}(m,0),
\]
which proves the induction step in this case for \textbf{(a)}, \textbf{(b)},
\textbf{(c)}. The other cases need more detailed arguments. We first
compute expressions $(R^{*}w_{n}^{*})(m,k)$ and discuss then the
comparison arguments.

\noindent For state $(0,b)$ we have
\begin{align}
(R^{-}w_{n}^{-})(0,b) & =\frac{\lambda}{\alpha}\cdot w_{n}^{-}(1,b)+\frac{\gamma\cdot b}{\alpha}\cdot w_{n}^{-}(0,b-1)+\frac{\mu+\nu}{\alpha}\cdot w_{n}^{-}(0,b),\label{eq:ind1-}\\
(R^{o}w_{n}^{o})(0,b) & =\frac{\lambda}{\alpha}\cdot w_{n}^{o}(1,b)+\frac{\gamma\cdot b}{\alpha}\cdot w_{n}^{o}(0,b-1)+\frac{\mu+\nu}{\alpha}\cdot w_{n}^{o}(0,b),\label{eq:ind1o}\\
(R^{+}w_{n}^{+})(0,b) & =\frac{\lambda}{\alpha}\cdot w_{n}^{+}(1,b)+\frac{\gamma\cdot(b-1)}{\alpha}\cdot w_{n}^{+}(0,b-1)+\frac{\mu+\nu+\gamma}{\alpha}\cdot w_{n}^{+}(0,b).\label{eq:ind1+}
\end{align}

\noindent For states $(0,k)$ with $k\in\{1,\dots,b-1\}$ we have
\begin{align}
(R^{-}w_{n}^{-})(0,k)  &=\frac{\lambda}{\alpha}\cdot w_{n}^{-}(1,k)+\frac{\nu}{\alpha}\cdot w_{n}^{-}(0,k+1)+\frac{\gamma \cdot k}{\alpha}\cdot w_{n}^{-}(0,k-1)+\frac{\mu+\gamma\cdot (b-k)}{\alpha}\cdot w_{n}^{-}(0,k),\label{eq:ind2-}\\
(R^{o}w_{n}^{o})(0,k)  &=\frac{\lambda}{\alpha}\cdot w_{n}^{o}(1,k)+\frac{\nu}{\alpha}\cdot w_{n}^{o}(0,k+1)+\frac{\gamma \cdot k}{\alpha}\cdot w_{n}^{o}(0,k-1)
+\frac{\mu+\gamma\cdot (b-k)}{\alpha}\cdot w_{n}^{o}(0,k),\label{eq:ind2o}\\
(R^{+}w_{n}^{+})(0,k)  &=\frac{\lambda}{\alpha}\cdot w_{n}^{+}(1,k)+\frac{\nu}{\alpha}\cdot w_{n}^{+}(0,k+1)+\frac{\gamma\cdot (k-1)}{\alpha}\cdot w_{n}^{+}(0,k-1)\nonumber\\
&\phantomeq +\frac{\mu+\gamma\cdot (b-k+1)}{\alpha}\cdot w_{n}^{+}(0,k).\label{eq:ind2+}
\end{align}

\noindent For states $(m,b)$ with $m\in\mathbb{N}$ we have
\begin{align}
(R^{-}w_{n}^{-})(m,b)  &=\frac{\lambda}{\alpha}\cdot w_{n}^{-}(m+1,b)+\frac{\mu}{\alpha}\cdot w_{n}^{-}(m-1,b-1)
+\frac{\gamma \cdot b}{\alpha}\cdot w_{n}^{-}(m,b-1)+\frac{\nu}{\alpha}\cdot w_{n}^{-}(m,b),\label{eq:ind3-}\\
(R^{o}w_{n}^{o})(m,b)  &=\frac{\lambda}{\alpha}\cdot w_{n}^{o}(m+1,b)+\frac{\mu}{\alpha}\cdot w_{n}^{o}(m-1,b-1)
+\frac{\gamma\cdot (b-1)}{\alpha}\cdot w_{n}^{o}(m,b-1)\nonumber\\
&\phantomeq+\frac{\nu+\gamma}{\alpha}\cdot w_{n}^{o}(m,b),\label{eq:ind3o}\\
(R^{+}w_{n}^{+})(m,b)  &=\frac{\lambda}{\alpha}\cdot w_{n}^{+}(m+1,b)+\frac{\mu}{\alpha}\cdot w_{n}^{+}(m-1,b-1)
+\frac{\gamma\cdot (b-1)}{\alpha}\cdot w_{n}^{+}(m,b-1)\nonumber\\
&\phantomeq +\frac{\nu+\gamma}{\alpha}\cdot w_{n}^{+}(m,b).\label{eq:ind3+}
\end{align}

\noindent For states $(m,k)$ with $m\in\mathbb{N}$ and $k\in\{1,\dots,b-1\}$ we have
\begin{align}
(R^{-}w_{n}^{-})(m,k) & =\frac{\lambda}{\alpha}\cdot w_{n}^{-}(m+1,k)+\frac{\mu}{\alpha}\cdot w_{n}^{-}(m-1,k-1)+\frac{\nu}{\alpha}\cdot w_{n}^{-}(m,k+1)+\frac{\gamma\cdot k}{\alpha}\cdot w_{n}^{-}(m,k-1)\nonumber \\
& \phantomeq+\frac{\gamma\cdot(b-k)}{\alpha}\cdot w_{n}^{-}(m,k),\label{eq:ind4-}\\
(R^{o}w_{n}^{o})(m,k) & =\frac{\lambda}{\alpha}\cdot w_{n}^{o}(m+1,k)+\frac{\mu}{\alpha}\cdot w_{n}^{o}(m-1,k-1)+\frac{\nu}{\alpha}\cdot w_{n}^{o}(m,k+1)\nonumber \\
& \phantomeq+\frac{\gamma\cdot(k-1)}{\alpha}\cdot w_{n}^{o}(m,k-1)+\frac{\gamma\cdot(b-k+1)}{\alpha}\cdot w_{n}^{o}(m,k),\label{eq:ind4o}\\
(R^{+}w_{n}^{+})(m,k) & =\frac{\lambda}{\alpha}\cdot w_{n}^{+}(m+1,k)+\frac{\mu}{\alpha}\cdot w_{n}^{+}(m-1,k-1)+\frac{\nu}{\alpha}\cdot w_{n}^{+}(m,k+1)\nonumber \\
& \phantomeq+\frac{\gamma\cdot(k-1)}{\alpha}\cdot w_{n}^{+}(m,k-1)+\frac{\gamma\cdot(b-k+1)}{\alpha}\cdot w_{n}^{+}(m,k).\label{eq:ind4+}
\end{align}

\noindent \textbf{(a)} Comparing \eqref{eq:ind1-} and \eqref{eq:ind1o} resp.\ \eqref{eq:ind2-}
and \eqref{eq:ind2o} shows that for initial states $(0,k)$ for all
$k\in\{1,\dots,b\}$ the proposed inequality $w_{n+1}^{-}(0,k)\leq w_{n+1}^{o}(0,k)$
holds.
We rewrite \eqref{eq:ind3-} and \eqref{eq:ind3o} as
\begin{align*}
(R^{-}w_{n}^{-})(m,b) & =\frac{\lambda}{\alpha}\cdot w_{n}^{-}(m+1,b)+\frac{\mu}{\alpha}\cdot w_{n}^{-}(m-1,b-1)+\frac{\gamma\cdot(b-1)}{\alpha}\cdot w_{n}^{-}(m,b-1)\\
& \phantomeq+\frac{\nu+\gamma}{\alpha}\cdot w_{n}^{-}(m,b)+{\left[\frac{\gamma}{\alpha}\cdot w_{n}^{-}(m,b-1)-\frac{\gamma}{\alpha}\cdot w_{n}^{-}(m,b)\right]},\\
(R^{o}w_{n}^{o})(m,b) & =\frac{\lambda}{\alpha}\cdot w_{n}^{o}(m+1,b)+\frac{\mu}{\alpha}\cdot w_{n}^{o}(m-1,b-1)+\frac{\gamma\cdot(b-1)}{\alpha}\cdot w_{n}^{o}(m,b-1)+\frac{\nu+\gamma}{\alpha}\cdot w_{n}^{o}(m,b)
\end{align*}
and rewrite \eqref{eq:ind4-} and \eqref{eq:ind4o} as
\begin{align*}
(R^{-}w_{n}^{-})(m,k) & =\frac{\lambda}{\alpha}\cdot w_{n}^{-}(m+1,k)+\frac{\mu}{\alpha}\cdot w_{n}^{-}(m-1,k-1)+\frac{\nu}{\alpha}\cdot w_{n}^{-}(m,k+1)\\
& \phantomeq+\frac{\gamma\cdot(k-1)}{\alpha}\cdot w_{n}^{-}(m,k-1)+\frac{\gamma\cdot(b-k+1)}{\alpha}\cdot w_{n}^{-}(m,k)\\
& \phantomeq+\left[\frac{\gamma}{\alpha}\cdot w_{n}^{-}(m,k-1)-\frac{\gamma}{\alpha}\cdot w_{n}^{-}(m,k)\right],\\
(R^{o}w_{n}^{o})(m,k) & =\frac{\lambda}{\alpha}\cdot w_{n}^{o}(m+1,k)+\frac{\mu}{\alpha}\cdot w_{n}^{o}(m-1,k-1)+\frac{\nu}{\alpha}\cdot w_{n}^{o}(m,k+1)\\
& \phantomeq+\frac{\gamma\cdot(k-1)}{\alpha}\cdot w_{n}^{o}(m,k-1)+\frac{\gamma\cdot(b-k+1)}{\alpha}\cdot w_{n}^{o}(m,k).
\end{align*}
If $w_{n}^{-}$ is isotone, the differences in the squared brackets are non-positive.
This proves \textbf{(a)}.
$\ $\\

\noindent\textbf{(b)} Comparing
\eqref{eq:ind3o} and \eqref{eq:ind3+} resp.\ \eqref{eq:ind4o}
and \eqref{eq:ind4+} shows that for initial states $(m,k)$ with
$m\geq1$ and $k\in\{1,\dots,b\}$ the proposed inequality $w_{n+1}^{o}(m,k)\leq w_{n+1}^{+}(m,k)$
holds.\\
\noindent We rewrite \eqref{eq:ind1o} and \eqref{eq:ind1+} as
\begin{align*}
&(R^{o}w_{n}^{o})(0,b)  =\frac{\lambda}{\alpha}\cdot w_{n}^{o}(1,b)+\frac{\gamma\cdot b}{\alpha}\cdot w_{n}^{o}(0,b-1)+\frac{\mu+\nu}{\alpha}\cdot w_{n}^{o}(0,b),\\
&(R^{+}w_{n}^{+})(0,b)  =\frac{\lambda}{\alpha}\cdot w_{n}^{+}(1,b)+\frac{\gamma\cdot b}{\alpha}\cdot w_{n}^{+}(0,b-1)+\frac{\mu+\nu}{\alpha}\cdot w_{n}^{+}(0,b)+\left[\frac{\gamma}{\alpha}\cdot w_{n}^{+}(0,b)-\frac{\gamma}{\alpha}\cdot w_{n}^{+}(0,b-1)\right]
\end{align*}
and rewrite \eqref{eq:ind2o} and \eqref{eq:ind2+} as
\begin{align*}
(R^{o}w_{n}^{o})(0,k) & =\frac{\lambda}{\alpha}\cdot w_{n}^{o}(1,k)+\frac{\nu}{\alpha}\cdot w_{n}^{o}(0,k+1)+\frac{\gamma\cdot k}{\alpha}\cdot w_{n}^{o}(0,k-1)+\frac{\mu+\gamma\cdot(b-k)}{\alpha}\cdot w_{n}^{o}(0,k),\\
(R^{+}w_{n}^{+})(0,k) & =\frac{\lambda}{\alpha}\cdot w_{n}^{+}(1,k)+\frac{\nu}{\alpha}\cdot w_{n}^{+}(0,k+1)+\frac{\gamma\cdot k}{\alpha}\cdot w_{n}^{+}(0,k-1)+\frac{\mu+\gamma\cdot(b-k)}{\alpha}\cdot w_{n}^{+}(0,k)\\
& \phantomeq+\left[\frac{\gamma}{\alpha}\cdot w_{n}^{+}(0,k)-\frac{\gamma}{\alpha}\cdot w_{n}^{+}(0,k-1)\right].
\end{align*}
If $w_{n}^{+}$ is isotone, the differences in the squared brackets are non-negative. 
This proves \textbf{(b)}.\\
$\ $\\
\noindent \textbf{(c)} To prove the two-sided bounds $w_{n+1}^{-}(m,k)\leq w_{n+1}^{o}(m,k)\leq w_{n+1}^{+}(m,k)$
we first check again \eqref{eq:ind1-} and \eqref{eq:ind1o}
resp.\ \eqref{eq:ind2-} and \eqref{eq:ind2o} and see that for initial
states $(0,k)$ for all $k\in\{1,\dots,b\}$ the proposed inequality
$w_{n+1}^{-}(0,k)\leq w_{n+1}^{o}(0,k)$ holds. 
We rewrite \eqref{eq:ind1o} and \eqref{eq:ind1+} as 
\begin{align*}
(R^{o}w_{n}^{o})(0,b) & =\frac{\lambda}{\alpha}\cdot w_{n}^{o}(1,b)+\frac{\gamma\cdot(b-1)}{\alpha}\cdot w_{n}^{o}(0,b-1)+\frac{\mu+\nu+\gamma}{\alpha}\cdot w_{n}^{o}(0,b)\\
& \phantomeq+\left[\frac{\gamma}{\alpha}\cdot w_{n}^{o}(0,b-1)-\frac{\gamma}{\alpha}\cdot w_{n}^{o}(0,b)\right],\\
(R^{+}w_{n}^{+})(0,b) & =\frac{\lambda}{\alpha}\cdot w_{n}^{+}(1,b)+\frac{\gamma\cdot(b-1)}{\alpha}\cdot w_{n}^{+}(0,b-1)+\frac{\mu+\nu+\gamma}{\alpha}\cdot w_{n}^{+}(0,b)
\end{align*}
and rewrite \eqref{eq:ind2o} and \eqref{eq:ind2+} as
\begin{align*}
(R^{o}w_{n}^{o})(0,k) & =\frac{\lambda}{\alpha}\cdot w_{n}^{o}(1,k)+\frac{\nu}{\alpha}\cdot w_{n}^{o}(0,k+1)+\frac{\gamma\cdot(k-1)}{\alpha}\cdot w_{n}^{o}(0,k-1)\\
& \phantomeq+\frac{\mu+\gamma\cdot(b-k+1)}{\alpha}\cdot w_{n}^{o}(0,k)+\left[\frac{\gamma}{\alpha}\cdot w_{n}^{o}(0,k-1)-\frac{\gamma}{\alpha}\cdot w_{n}^{o}(0,k)\right],\\
(R^{+}w_{n}^{+})(0,k) & =\frac{\lambda}{\alpha}\cdot w_{n}^{+}(1,k)+\frac{\nu}{\alpha}\cdot w_{n}^{+}(0,k+1)+\frac{\gamma\cdot(k-1)}{\alpha}\cdot w_{n}^{+}(0,k-1)\\
& \phantomeq+\frac{\mu+\gamma\cdot(b-k+1)}{\alpha}\cdot w_{n}^{+}(0,k).
\end{align*}
If $w_{n}^{o}$ is isotone, the differences in squared brackets
are non-positive, which proves this part of \textbf{(c)}.\\

We next check \eqref{eq:ind3o} and \eqref{eq:ind3+} resp.\ \eqref{eq:ind4o}
and \eqref{eq:ind4+} and see that for initial states $(m,k)$ with
$m\geq1$ and $k\in\{1,\dots,b\}$ the proposed inequality $w_{n+1}^{o}(m,k)\leq w_{n+1}^{+}(m,k)$
holds.
We rewrite \eqref{eq:ind3-} and \eqref{eq:ind3o} as
\begin{align*}
(R^{-}w_{n}^{-})(m,b) & =\frac{\lambda}{\alpha}\cdot w_{n}^{-}(m+1,b)+\frac{\mu}{\alpha}\cdot w_{n}^{-}(m-1,b-1)+\frac{\gamma\cdot b}{\alpha}\cdot w_{n}^{-}(m,b-1)+\frac{\nu}{\alpha}\cdot w_{n}^{-}(m,b),\\
(R^{o}w_{n}^{o})(m,b) & =\frac{\lambda}{\alpha}\cdot w_{n}^{o}(m+1,b)+\frac{\mu}{\alpha}\cdot w_{n}^{o}(m-1,b-1)+\frac{\gamma\cdot b}{\alpha}\cdot w_{n}^{o}(m,b-1)+\frac{\nu}{\alpha}\cdot w_{n}^{o}(m,b)\\
& \phantomeq+\left[\frac{\gamma}{\alpha}\cdot w_{n}^{o}(m,b)-\frac{\gamma}{\alpha}\cdot w_{n}^{o}(m,b-1)\right]
\end{align*}
and rewrite \eqref{eq:ind4-} and \eqref{eq:ind4o} as
\begin{align*}
(R^{-}w_{n}^{-})(m,k) & =\frac{\lambda}{\alpha}\cdot w_{n}^{-}(m+1,k)+\frac{\mu}{\alpha}\cdot w_{n}^{-}(m-1,k-1)+\frac{\nu}{\alpha}\cdot w_{n}^{-}(m,k+1)\\
& \phantomeq+\frac{\gamma\cdot k}{\alpha}\cdot w_{n}^{-}(m,k-1)+\frac{\gamma\cdot(b-k)}{\alpha}\cdot w_{n}^{-}(m,k),\\
(R^{o}w_{n}^{o})(m,k) & =\frac{\lambda}{\alpha}\cdot w_{n}^{o}(m+1,k)+\frac{\mu}{\alpha}\cdot w_{n}^{o}(m-1,k-1)+\frac{\nu}{\alpha}\cdot w_{n}^{o}(m,k+1)\\
& \phantomeq+\frac{\gamma\cdot k}{\alpha}\cdot w_{n}^{o}(m,k-1)+\frac{\gamma\cdot(b-k)}{\alpha}\cdot w_{n}^{o}(m,k)+\left[\frac{\gamma}{\alpha}\cdot w_{n}^{o}(m,k)-\frac{\gamma}{\alpha}\cdot w_{n}^{o}(m,k-1)\right].
\end{align*}
If $w_{n}^{o}$ is isotone, the differences in squared brackets
are non-negative. 
This proves the remaining part of \textbf{(c)}. 

\endproof


\begin{lemma}\label{lem:isotone-2}
	Consider the ergodic production-inventory systems ``$-$'', and ``o'' from Conjecture \ref{conj:Monotonicity}
	with corresponding rates $\lambda,\mu,\gamma$,  and $\nu$.
	\begin{enumerate}
		\item[\textbf{(a)}]For the "$-$"-system it holds: $\lambda\leq \gamma$ implies that $w_n^-$ is isotone for all $n\in\mathbb{N}$.
		\item[\textbf{(b)}]
		For the "$o$"-system it holds: $\mu=\gamma$ implies  that $w_n^o$ is isotone for all $n\in\mathbb{N}$.
	\end{enumerate}	
\end{lemma}

\proof

\textbf{(a)} 
We show by induction that for all  $n\in\mathbb{N}$ it holds  with $\alpha:=\lambda+\mu+\nu+\gamma\cdot b$
\begin{align}
w_{n}^{-}(m,k)-w_{n}^{-}(m,k-1)\geq0, &  && \forall k\in\{1,\dots,b\},\ m\in\mathbb{N}_{0},\label{eq:iso-A}\\
w_{n}^{-}(m+1,k)-w_{n}^{-}(m,k)\geq0, &  && \forall k\in\{0,1,\dots,b\},\ m\in\mathbb{N}_{0},\label{eq:iso-B}\\
w_{n}^{-}(m+1,k)-w_{n}^{-}(m,k)\leq\alpha, &  && \forall k\in\{0,1,\dots,b\},\ m\in\mathbb{N}_{0},\label{eq:iso-C}\\
w_{n}^{-}(m,k)-w_{n}^{-}(m,k-1)\leq\alpha, &  && \forall k\in\{1,\dots,b\},\ m\in\mathbb{N}_{0}.\label{eq:iso-D}
\end{align}
For $n=1$ we have $w_{1}^{-}(m,k)=r(m,k)=\mu\cdot1_{\left\{ m>0\right\} }\cdot1_{\left\{ k>0\right\} }$ for all $k\in\{0,1,\dots,b\},\ m\in\mathbb{N}_{0},$
so \eqref{eq:iso-A}--\eqref{eq:iso-D} are trivially true.

Let $n\in\mathbb{N}$ such that \eqref{eq:iso-A}--\eqref{eq:iso-D} hold.
We shall verify these properties for $n$ replaced by $n+1$. In any case we
exploit $w_{n+1}^{-}=r+R^{-}w_{n}^{-},$ $n\geq1$. \\

\noindent$\blacktriangleright$ First, we check \eqref{eq:iso-A}.
For $m=0$ and $k=1$ it holds 
\begin{align*}
& \phantomeq w_{n+1}^{-}(0,1)-w_{n+1}^{-}(0,0)\\
& =\left[\frac{\lambda}{\alpha}\cdot w_{n}^{-}(1,1)+\frac{\nu}{\alpha}\cdot w_{n}^{-}(0,2)+\frac{\gamma}{\alpha}\cdot w_{n}^{-}(0,0)+\frac{\mu+\gamma\cdot(b-1)}{\alpha}\cdot w_{n}^{-}(0,1)\right]\\
& \phantomeq-\left[\frac{\nu}{\alpha}\cdot w_{n}^{-}(0,1)+\frac{\lambda+\mu+\gamma\cdot b}{\alpha}\cdot w_{n}^{-}(0,0)\right]\\
& =\frac{\lambda}{\alpha}\cdot\underbrace{(w_{n}^{-}(1,1)-w_{n}^{-}(1,0))}_{\geq0,~~\text{by }\eqref{eq:iso-A}}+\frac{\lambda}{\alpha}\cdot\underbrace{(w_{n}^{-}(1,0)-w_{n}^{-}(0,0))}_{\geq0,~~\text{by }\eqref{eq:iso-B}}+\frac{\nu}{\alpha}\cdot\underbrace{(w_{n}^{-}(0,2)-w_{n}^{-}(0,1))}_{\geq0,~~\text{by }\eqref{eq:iso-A}}\\
& \phantomeq+\frac{\mu+\gamma\cdot(b-1)}{\alpha}\cdot\underbrace{(w_{n}^{-}(0,1)-w_{n}^{-}(0,0))}_{\geq0,~~\text{by }\eqref{eq:iso-A}}\geq0.
\end{align*}
For $m=0$ and $k\in\{2,\dots,b-1\}$ it holds
\begin{align*}
& \phantomeq w_{n+1}^{-}(0,k)-w_{n+1}^{-}(0,k-1)\\
& =\left[\frac{\lambda}{\alpha}\cdot w_{n}^{-}(1,k)+\frac{\nu}{\alpha}\cdot w_{n}^{-}(0,k+1)+\frac{\gamma\cdot k}{\alpha}\cdot w_{n}^{-}(0,k-1)+\frac{\mu+\gamma\cdot(b-k)}{\alpha}\cdot w_{n}^{-}(0,k)\right]\\
& \phantomeq-\left[\frac{\lambda}{\alpha}\cdot w_{n}^{-}(1,k-1)+\frac{\nu}{\alpha}\cdot w_{n}^{-}(0,k)+\frac{\gamma\cdot(k-1)}{\alpha}\cdot w_{n}^{-}(0,k-2)+\frac{\mu+\gamma\cdot(b-k+1)}{\alpha}\cdot w_{n}^{-}(0,k-1)\right]\\
& =\frac{\lambda}{\alpha}\cdot\underbrace{(w_{n}^{-}(1,k)-w_{n}^{-}(1,k-1))}_{\geq0,~~\text{by }\eqref{eq:iso-A}}+\frac{\nu}{\alpha}\cdot\underbrace{(w_{n}^{-}(0,k+1)-w_{n}^{-}(0,k))}_{\geq0,~~\text{by }\eqref{eq:iso-A}}\qquad\qquad\quad\quad\qquad\qquad\\
& \phantomeq+\frac{\gamma\cdot(k-1)}{\alpha}\cdot\underbrace{(w_{n}^{-}(0,k-1)-w_{n}^{-}(0,k-2))}_{\geq0,~~\text{by }\eqref{eq:iso-A}}
+\frac{\mu+\gamma\cdot(b-k)}{\alpha}\cdot\underbrace{(w_{n}^{-}(0,k)-w_{n}^{-}(0,k-1))}_{\geq0,~~\text{by }\eqref{eq:iso-A}}\geq0.
\end{align*}
For $m=0$ and $k=b$ it holds
\begin{align*}
& \phantomeq w_{n+1}^{-}(0,b)-w_{n+1}^{-}(0,b-1)\\
& =\left[\frac{\lambda}{\alpha}\cdot w_{n}^{-}(1,b)+\frac{\gamma\cdot b}{\alpha}\cdot w_{n}^{-}(0,b-1)+\frac{\mu+\nu}{\alpha}\cdot w_{n}^{-}(0,b)\right]\\
& \phantomeq-\left[\frac{\lambda}{\alpha}\cdot w_{n}^{-}(1,b-1)+\frac{\nu}{\alpha}\cdot w_{n}^{-}(0,b)+\frac{\gamma\cdot(b-1)}{\alpha}\cdot w_{n}^{-}(0,b-2)+\frac{\mu+\gamma}{\alpha}\cdot w_{n}^{-}(0,b-1)\right]\\
& =\frac{\lambda}{\alpha}\cdot\underbrace{(w_{n}^{-}(1,b)-w_{n}^{-}(1,b-1))}_{\geq0,~~\text{by }\eqref{eq:iso-A}}+\frac{\gamma\cdot(b-1)}{\alpha}\cdot\underbrace{(w_{n}^{-}(0,b-1)-w_{n}^{-}(0,b-2))}_{\geq0,~~\text{by }\eqref{eq:iso-A}}\\
& \phantomeq+\frac{\mu}{\alpha}\cdot\underbrace{(w_{n}^{-}(0,b)-w_{n}^{-}(0,b-1))}_{\geq0,~~\text{by }\eqref{eq:iso-A}}\geq0.
\end{align*}
For $m\geq1$ and $k=1$ it holds
\begin{align*}
& \phantomeq w_{n+1}^{-}(m,1)-w_{n+1}^{-}(m,0)\\
& =\left[\mu+\frac{\lambda}{\alpha}\cdot w_{n}^{-}(m+1,1)+\frac{\mu}{\alpha}\cdot w_{n}^{-}(m-1,0)+\frac{\nu}{\alpha}\cdot w_{n}^{-}(m,2)+\frac{\gamma}{\alpha}\cdot w_{n}^{-}(m,0)+\frac{\gamma\cdot(b-1)}{\alpha}\cdot w_{n}^{-}(m,1)\right]\\
& \phantomeq-\left[0+\frac{\nu}{\alpha}\cdot w_{n}^{-}(m,1)+\frac{\lambda+\mu+\gamma\cdot b}{\alpha}\cdot w_{n}^{-}(m,0)\right]\\
& =\frac{\lambda}{\alpha}\cdot\underbrace{(w_{n}^{-}(m+1,1)-w_{n}^{-}(m,1))}_{\geq0,~~\text{by }\eqref{eq:iso-B}}+\frac{\lambda}{\alpha}\cdot\underbrace{(w_{n}^{-}(m,1)-w_{n}^{-}(m,0))}_{\geq0,~~\text{by }\eqref{eq:iso-A}}+\frac{\nu}{\alpha}\cdot\underbrace{(w_{n}^{-}(m,2)-w_{n}^{-}(m,1))}_{\geq0,~~\text{by }\eqref{eq:iso-A}}\\
& \phantomeq+\frac{\gamma\cdot(b-1)}{\alpha}\cdot\underbrace{(w_{n}^{-}(m,1)-w_{n}^{-}(m,0))}_{\geq0,~~\text{by }\eqref{eq:iso-A}}+\underbrace{\mu-\frac{\mu}{\alpha}\cdot\underbrace{(w_{n}^{-}(m,0)-w_{n}^{-}(m-1,0))}_{\in[0,\alpha],~~\text{by }\eqref{eq:iso-B},\ \eqref{eq:iso-C}}}_{\geq0}\geq0.
\end{align*}
For $m\geq1$ and $k\in\{2,\dots,b-1\}$ it holds
\begin{align*}
& \phantomeq w_{n+1}^{-}(m,k)-w_{n+1}^{-}(m,k-1)\\
& =\left[\mu+\frac{\lambda}{\alpha}\cdot w_{n}^{-}(m+1,k)+\frac{\mu}{\alpha}\cdot w_{n}^{-}(m-1,k-1)+\frac{\nu}{\alpha}\cdot w_{n}^{-}(m,k+1)\right.\\
& \phantomeq\left.+\frac{\gamma\cdot k}{\alpha}\cdot w_{n}^{-}(m,k-1)+\frac{\gamma\cdot(b-k)}{\alpha}\cdot w_{n}^{-}(m,k)\right]\\
& \phantomeq-\left[\mu+\frac{\lambda}{\alpha}\cdot w_{n}^{-}(m+1,k-1)+\frac{\mu}{\alpha}\cdot w_{n}^{-}(m-1,k-2)+\frac{\nu}{\alpha}\cdot w_{n}^{-}(m,k)\right.\\
& \phantomeq+\left.\frac{\gamma\cdot(k-1)}{\alpha}\cdot w_{n}^{-}(m,k-2)+\frac{\gamma\cdot(b-k+1)}{\alpha}\cdot w_{n}^{-}(m,k-1)\right]\\
& =\frac{\lambda}{\alpha}\cdot\underbrace{(w_{n}^{-}(m+1,k)-w_{n}^{-}(m+1,k-1))}_{\geq0,~~\text{by }\eqref{eq:iso-A}}+\frac{\mu}{\alpha}\cdot\underbrace{(w_{n}^{-}(m-1,k-1)-w_{n}^{-}(m-1,k-2))}_{\geq0,~~\text{by }\eqref{eq:iso-A}}\\
& \phantomeq+\frac{\nu}{\alpha}\cdot\underbrace{(w_{n}^{-}(m,k+1)-w_{n}^{-}(m,k))}_{\geq0,~~\text{by }\eqref{eq:iso-A}}+\frac{\gamma\cdot(k-1)}{\alpha}\cdot\underbrace{(w_{n}^{-}(m,k-1)-w_{n}^{-}(m,k-2))}_{\geq0,~~\text{by }\eqref{eq:iso-A}}\\
& \phantomeq+\frac{\gamma\cdot(b-k)}{\alpha}\cdot\underbrace{(w_{n}^{-}(m,k)-w_{n}^{-}(m,k-1))}_{\geq0,~~\text{by }\eqref{eq:iso-A}}\geq0.
\end{align*}
For $m\geq1$ and $k=b$ holds
\begin{align*}
& \phantomeq w_{n+1}^{-}(m,b)-w_{n+1}^{-}(m,b-1)\\
& =\left[\mu+\frac{\lambda}{\alpha}\cdot w_{n}^{-}(m+1,b)+\frac{\mu}{\alpha}\cdot w_{n}^{-}(m-1,b-1)+\frac{\gamma\cdot b}{\alpha}\cdot w_{n}^{-}(m,b-1)+\frac{\nu}{\alpha}w_{n}^{-}(m,b)\right]\\
& \phantomeq-\left[\mu+\frac{\lambda}{\alpha}\cdot w_{n}^{-}(m+1,b-1)+\frac{\mu}{\alpha}\cdot w_{n}^{-}(m-1,b-2)+\frac{\nu}{\alpha}\cdot w_{n}^{-}(m,b)\right.\\
& \phantomeq+\left.\frac{\gamma\cdot(b-1)}{\alpha}\cdot w_{n}^{-}(m,b-2)+\frac{\gamma}{\alpha}\cdot w_{n}^{-}(m,b-1)\right]\\
& =\frac{\lambda}{\alpha}\cdot\underbrace{(w_{n}^{-}(m+1,b)-w_{n}^{-}(m+1,b-1))}_{\geq0,~~\text{by }\eqref{eq:iso-A}}+\frac{\mu}{\alpha}\cdot\underbrace{(w_{n}^{-}(m-1,b-1)-w_{n}^{-}(m-1,b-2))}_{\geq0,~~\text{by }\eqref{eq:iso-A}}\\
& \phantomeq+\frac{\gamma\cdot(b-1)}{\alpha}\cdot\underbrace{(w_{n}^{-}(m,b-1)-w_{n}^{-}(m,b-2))}_{\geq0,~~\text{by }\eqref{eq:iso-A}}\geq0.
\end{align*}
\noindent$\blacktriangleright$ Second, we check \eqref{eq:iso-B}.
For $m=0$ and $k=0$ it holds 
\begin{align*}
& \phantomeq w_{n+1}^{-}(1,0)-w_{n+1}^{-}(0,0)\\
& =\left[\frac{\nu}{\alpha}\cdot w_{n}^{-}(1,1)+\frac{\lambda+\mu+\gamma\cdot b}{\alpha}\cdot w_{n}^{-}(1,0)\right]-\left[\frac{\nu}{\alpha}\cdot w_{n}^{-}(0,1)+\frac{\lambda+\mu+\gamma\cdot b}{\alpha}\cdot w_{n}^{-}(0,0)\right]\\
& =\frac{\nu}{\alpha}\cdot\underbrace{(w_{n}^{-}(1,1)-w_{n}^{-}(0,1))}_{\geq0,~~\text{by }\eqref{eq:iso-B}}+\frac{\lambda+\mu+\gamma\cdot b}{\alpha}\cdot\underbrace{(w_{n}^{-}(1,0)-w_{n}^{-}(0,0))}_{\geq0,~~\text{by }\eqref{eq:iso-B}}\geq0.
\end{align*}
For $m=0$ and $k\in\{1,\dots,b-1\}$ it holds
\begin{align*}
& \phantomeq w_{n+1}^{-}(1,k)-w_{n+1}^{-}(0,k)\\
& =\mu+\frac{\lambda}{\alpha}\cdot w_{n}^{-}(2,k)+\frac{\mu}{\alpha}\cdot w_{n}^{-}(0,k-1)+\frac{\nu}{\alpha}\cdot w_{n}^{-}(1,k+1)+\frac{\gamma\cdot k}{\alpha}\cdot w_{n}^{-}(1,k-1)+\frac{\gamma\cdot(b-k)}{\alpha}\cdot w_{n}^{-}(1,k)\\
& \phantomeq-\left[0+\frac{\lambda}{\alpha}\cdot w_{n}^{-}(1,k)+\frac{\nu}{\alpha}\cdot w_{n}^{-}(0,k+1)+\frac{\gamma\cdot k}{\alpha}\cdot w_{n}^{-}(0,k-1)+\frac{\mu+\gamma\cdot(b-k)}{\alpha}\cdot w_{n}^{-}(0,k)\right]\\
& =\frac{\lambda}{\alpha}\cdot\underbrace{(w_{n}^{-}(2,k)-w_{n}^{-}(1,k))}_{\geq0,~~\text{by }\eqref{eq:iso-B}}+\frac{\nu}{\alpha}\cdot\underbrace{(w_{n}^{-}(1,k+1)-w_{n}^{-}(0,k+1))}_{\geq0,~~\text{by }\eqref{eq:iso-B}}
+\frac{\gamma\cdot k}{\alpha}\cdot\underbrace{(w_{n}^{-}(1,k-1)-w_{n}^{-}(0,k-1))}_{\geq0,~~\text{by }\eqref{eq:iso-B}}\\
& \phantomeq+\frac{\gamma\cdot(b-k)}{\alpha}\cdot\underbrace{(w_{n}^{-}(1,k)-w_{n}^{-}(0,k))}_{\geq0,~~\text{by }\eqref{eq:iso-B}}
+\underbrace{\mu-\frac{\mu}{\alpha}\cdot\underbrace{(w_{n}^{-}(0,k)-w_{n}^{-}(0,k-1))}_{\in[0,\alpha],~~\text{by }\eqref{eq:iso-A},\ \eqref{eq:iso-D}}}_{\geq0}\geq0.
\end{align*}
For $m=0$ and $k=b$ it holds
\begin{align*}
& \phantomeq w_{n+1}^{-}(1,b)-w_{n+1}^{-}(0,b)\\
& =\left[\mu+\frac{\lambda}{\alpha}\cdot w_{n}^{-}(2,b)+\frac{\mu}{\alpha}\cdot w_{n}^{-}(0,b-1)+\frac{\gamma\cdot b}{\alpha}\cdot w_{n}^{-}(1,b-1)+\frac{\nu}{\alpha}\cdot w_{n}^{-}(1,b)\right]\\
& \phantomeq-\left[0+\frac{\lambda}{\alpha}\cdot w_{n}^{-}(1,b)+\frac{\gamma\cdot b}{\alpha}\cdot w_{n}^{-}(0,b-1)+\frac{\mu+\nu}{\alpha}\cdot w_{n}^{-}(0,b)\right]\\
& =\frac{\lambda}{\alpha}\cdot\underbrace{(w_{n}^{-}(2,b)-w_{n}^{-}(1,b))}_{\geq0,~~\text{by }\eqref{eq:iso-B}}+\frac{\nu}{\alpha}\cdot\underbrace{(w_{n}^{-}(1,b)-w_{n}^{-}(0,b))}_{\geq0,~~\text{by }\eqref{eq:iso-B}}\\
& \phantomeq+\frac{\gamma\cdot b}{\alpha}\cdot\underbrace{(w_{n}^{-}(1,b-1)-w_{n}^{-}(0,b-1))}_{\geq0,~~\text{by }\eqref{eq:iso-B}}+\underbrace{\mu-\frac{\mu}{\alpha}\cdot\underbrace{(w_{n}^{-}(0,b)-w_{n}^{-}(0,b-1))}_{\in[0,\alpha],~~\text{by }\eqref{eq:iso-A},\ \eqref{eq:iso-D}}}_{\geq0}\geq0.
\end{align*}
For $m\geq1$ and $k=0$ it holds
\begin{align*}
& \phantomeq w_{n+1}^{-}(m+1,0)-w_{n+1}^{-}(m,0)\\
& =\left[\frac{\nu}{\alpha}\cdot w_{n}^{-}(m+1,1)+\frac{\lambda+\mu+\gamma\cdot b}{\alpha}\cdot w_{n}^{-}(m+1,0)\right]-\left[\frac{\nu}{\alpha}\cdot w_{n}^{-}(m,1)+\frac{\lambda+\mu+\gamma\cdot b}{\alpha}\cdot w_{n}^{-}(m,0)\right]\\
& =\frac{\nu}{\alpha}\cdot\underbrace{(w_{n}^{-}(m+1,1)-w_{n}^{-}(m,1))}_{\geq0,~~\text{by }\eqref{eq:iso-B}}+\frac{\lambda+\mu+\gamma\cdot b}{\alpha}\cdot\underbrace{(w_{n}^{-}(m+1,0)-w_{n}^{-}(m,0))}_{\geq0,~~\text{by }\eqref{eq:iso-B}}\geq0.
\end{align*}
For $m\geq1$ and $k\in\{1,\dots,b-1\}$ it holds
\begin{align*}
& \phantomeq w_{n+1}^{-}(m+1,k)-w_{n+1}^{-}(m,k)\\
& =\left[\mu+\frac{\lambda}{\alpha}\cdot w_{n}^{-}(m+2,k)+\frac{\mu}{\alpha}\cdot w_{n}^{-}(m,k-1)+\frac{\nu}{\alpha}\cdot w_{n}^{-}(m+1,k+1)\right.\\
& \phantomeq\left.+\frac{\gamma\cdot k}{\alpha}\cdot w_{n}^{-}(m+1,k-1)+\frac{\gamma\cdot(b-k)}{\alpha}\cdot w_{n}^{-}(m+1,k)\right]\\
& \phantomeq-\left[\mu+\frac{\lambda}{\alpha}\cdot w_{n}^{-}(m+1,k)+\frac{\mu}{\alpha}\cdot w_{n}^{-}(m-1,k-1)+\frac{\nu}{\alpha}\cdot w_{n}^{-}(m,k+1)\right.\\
& \phantomeq+\left.\frac{\gamma\cdot k}{\alpha}\cdot w_{n}^{-}(m,k-1)+\frac{\gamma\cdot(b-k)}{\alpha}\cdot w_{n}^{-}(m,k)\right]\\
& =\frac{\lambda}{\alpha}\cdot\underbrace{(w_{n}^{-}(m+2,k)-w_{n}^{-}(m+1,k))}_{\geq0,~~\text{by }\eqref{eq:iso-B}}+\frac{\mu}{\alpha}\cdot\underbrace{(w_{n}^{-}(m,k-1)-w_{n}^{-}(m-1,k-1))}_{\geq0,~~\text{by }\eqref{eq:iso-B}}\\
& \phantomeq+\frac{\nu}{\alpha}\cdot\underbrace{(w_{n}^{-}(m+1,k+1)-w_{n}^{-}(m,k+1))}_{\geq0,~~\text{by }\eqref{eq:iso-B}}+\frac{\gamma\cdot k}{\alpha}\cdot\underbrace{(w_{n}^{-}(m+1,k-1)-w_{n}^{-}(m,k-1))}_{\geq0,~~\text{by }\eqref{eq:iso-B}}\\
& \phantomeq+\frac{\gamma\cdot(b-k)}{\alpha}\cdot\underbrace{(w_{n}^{-}(m+1,k)-w_{n}^{-}(m,k))}_{\geq0,~~\text{by }\eqref{eq:iso-B}}\geq0.
\end{align*}
For $m\geq1$ and $k=b$ it holds
\begin{align*}
& \phantomeq w_{n+1}^{-}(m+1,b)-w_{n+1}^{-}(m,b)\\
& =\left[\mu+\frac{\lambda}{\alpha}\cdot w_{n}^{-}(m+2,b)+\frac{\mu}{\alpha}\cdot w_{n}^{-}(m,b-1)+\frac{\gamma\cdot b}{\alpha}\cdot w_{n}^{-}(m+1,b-1)+\frac{\nu}{\alpha}\cdot w_{n}^{-}(m+1,b)\right]\\
& \phantomeq-\left[\mu+\frac{\lambda}{\alpha}\cdot w_{n}^{-}(m+1,b)+\frac{\mu}{\alpha}\cdot w_{n}^{-}(m-1,b-1)+\frac{\gamma\cdot b}{\alpha}\cdot w_{n}^{-}(m,b-1)+\frac{\nu}{\alpha}\cdot w_{n}^{-}(m,b)\right]\\
& =\frac{\lambda}{\alpha}\cdot\underbrace{(w_{n}^{-}(m+2,b)-w_{n}^{-}(m+1,b))}_{\geq0,~~\text{by }\eqref{eq:iso-B}}+\underbrace{\frac{\mu}{\alpha}\cdot(w_{n}^{-}(m,b-1)-w_{n}^{-}(m-1,b-1))}_{\geq0,~~\text{by }\eqref{eq:iso-B}}\\
& \phantomeq+\frac{\gamma\cdot b}{\alpha}\cdot\underbrace{(w_{n}^{-}(m+1,b-1)-w_{n}^{-}(m,b-1))}_{\geq0,~~\text{by }\eqref{eq:iso-B}}+\frac{\nu}{\alpha}\cdot\underbrace{(w_{n}^{-}(m+1,b)-w_{n}^{-}(m,b))}_{\geq0,~~\text{by }\eqref{eq:iso-B}}\geq0.
\end{align*}
\noindent$\blacktriangleright$ Third, we check \eqref{eq:iso-C}.
For $m=0$ and $k=0$ it holds 
\begin{align*}
& \phantomeq w_{n+1}^{-}(1,0)-w_{n+1}^{-}(0,0)\\
& =\left[\frac{\nu}{\alpha}\cdot w_{n}^{-}(1,1)+\frac{\lambda+\mu+\gamma\cdot b}{\alpha}\cdot w_{n}^{-}(1,0)\right]-\left[\frac{\nu}{\alpha}\cdot w_{n}^{-}(0,1)+\frac{\lambda+\mu+\gamma\cdot b}{\alpha}\cdot w_{n}^{-}(0,0)\right]\\
& =\frac{\nu}{\alpha}\cdot\underbrace{(w_{n}^{-}(1,1)-w_{n}^{-}(0,1))}_{\leq\alpha,~~\text{by }\eqref{eq:iso-C}}+\frac{\lambda+\mu+\gamma\cdot b}{\alpha}\cdot\underbrace{(w_{n}^{-}(1,0)-w_{n}^{-}(0,0))}_{\leq\alpha,~~\text{by }\eqref{eq:iso-C}}\leq\alpha.
\end{align*}
For $m=0$ and $k\in\{1,\dots,b-1\}$ it holds
\begin{align*}
& \phantomeq w_{n+1}^{-}(1,k)-w_{n+1}^{-}(0,k)\\
& =\left[\mu+\frac{\lambda}{\alpha}\cdot w_{n}^{-}(2,k)+\frac{\mu}{\alpha}\cdot w_{n}^{-}(0,k-1)+\frac{\nu}{\alpha}\cdot w_{n}^{-}(1,k+1)+\frac{\gamma\cdot k}{\alpha}\cdot w_{n}^{-}(1,k-1)+\frac{\gamma\cdot(b-k)}{\alpha}\cdot w_{n}^{-}(1,k)\right]\\
& \phantomeq-\left[0+\frac{\lambda}{\alpha}\cdot w_{n}^{-}(1,k)+\frac{\nu}{\alpha}\cdot w_{n}^{-}(0,k+1)+\frac{\gamma\cdot k}{\alpha}\cdot w_{n}^{-}(0,k-1)+\frac{\mu+\gamma\cdot(b-k)}{\alpha}\cdot w_{n}^{-}(0,k)\right]\\
& =\frac{\lambda}{\alpha}\cdot\underbrace{(w_{n}^{-}(2,k)-w_{n}^{-}(1,k))}_{\leq\alpha,~~\text{by }\eqref{eq:iso-C}}+\frac{\nu}{\alpha}\cdot\underbrace{(w_{n}^{-}(1,k+1)-w_{n}^{-}(0,k+1))}_{\leq\alpha,~~\text{by }\eqref{eq:iso-C}}\\
& \phantomeq+\frac{\gamma\cdot k}{\alpha}\cdot\underbrace{(w_{n}^{-}(1,k-1)-w_{n}^{-}(0,k-1))}_{\leq\alpha,~~\text{by }\eqref{eq:iso-C}}+\frac{\gamma\cdot(b-k)}{\alpha}\cdot\underbrace{(w_{n}^{-}(1,k)-w_{n}^{-}(0,k))}_{\leq\alpha,~~\text{by }\eqref{eq:iso-C}}\\
& \phantomeq+\underbrace{\mu-\frac{\mu}{\alpha}\cdot\underbrace{(w_{n}^{-}(0,k)-w_{n}^{-}(0,k-1))}_{\in[0,\alpha],~~\text{by }\eqref{eq:iso-A},\ \eqref{eq:iso-D}}}_{\leq\mu}\leq\alpha.
\end{align*}
For $m=0$ and $k=b$ it holds
\begin{align*}
& \phantomeq w_{n+1}^{-}(1,b)-w_{n+1}^{-}(0,b)\\
& =\left[\mu+\frac{\lambda}{\alpha}\cdot w_{n}^{-}(2,b)+\frac{\mu}{\alpha}\cdot w_{n}^{-}(0,b-1)+\frac{\gamma\cdot b}{\alpha}\cdot w_{n}^{-}(1,b-1)+\frac{\nu}{\alpha}\cdot w_{n}^{-}(1,b)\right]\\
& \phantomeq-\left[0+\frac{\lambda}{\alpha}\cdot w_{n}^{-}(1,b)+\frac{\gamma\cdot b}{\alpha}\cdot w_{n}^{-}(0,b-1)+\frac{\mu+\nu}{\alpha}\cdot w_{n}^{-}(0,b)\right]\\
& =\frac{\lambda}{\alpha}\cdot\underbrace{(w_{n}^{-}(2,b)-w_{n}^{-}(1,b))}_{\leq\alpha,~~\text{by }\eqref{eq:iso-C}}+\frac{\nu}{\alpha}\cdot\underbrace{(w_{n}^{-}(1,b)-w_{n}^{-}(0,b))}_{\leq\alpha,~~\text{by }\eqref{eq:iso-C}}\\
& \phantomeq+\frac{\gamma\cdot b}{\alpha}\cdot\underbrace{(w_{n}^{-}(1,b-1)-w_{n}^{-}(0,b-1))}_{\leq\alpha,~~\text{by }\eqref{eq:iso-C}}+\underbrace{\mu-\frac{\mu}{\alpha}\cdot\underbrace{(w_{n}^{-}(0,b)-w_{n}^{-}(0,b-1))}_{\in[0,\alpha],~~\text{by }\eqref{eq:iso-A},\ \eqref{eq:iso-D}}}_{\leq\mu}\leq\alpha.
\end{align*}
\noindent
For $m\geq1$ and $k=0$ it holds
\begin{align*}
& \phantomeq w_{n+1}^{-}(m+1,0)-w_{n+1}^{-}(m,0)\\
& =\left[\frac{\nu}{\alpha}\cdot w_{n}^{-}(m+1,1)+\frac{\lambda+\mu+\gamma\cdot b}{\alpha}\cdot w_{n}^{-}(m+1,0)\right]-\left[\frac{\nu}{\alpha}\cdot w_{n}^{-}(m,1)+\frac{\lambda+\mu+\gamma\cdot b}{\alpha}\cdot w_{n}^{-}(m,0)\right]\\
& =\frac{\nu}{\alpha}\cdot \underbrace{(w_{n}^{-}(m+1,1)-w_{n}^{-}(m,1))}_{\leq\alpha,~~\text{by }\eqref{eq:iso-C}}+\frac{\lambda+\mu+\gamma\cdot b}{\alpha}\cdot\underbrace{(w_{n}^{-}(m+1,0)-w_{n}^{-}(m,0))}_{\leq\alpha,~~\text{by }\eqref{eq:iso-C}}\leq\alpha.
\end{align*}
For $m\geq1$ and $k\in\{1,\dots,b-1\}$ it holds
\begin{align*}
& \phantomeq w_{n+1}^{-}(m+1,k)-w_{n+1}^{-}(m,k)\\
& =\left[\mu+\frac{\lambda}{\alpha}\cdot w_{n}^{-}(m+2,k)+\frac{\mu}{\alpha}\cdot w_{n}^{-}(m,k-1)+\frac{\nu}{\alpha}\cdot w_{n}^{-}(m+1,k+1)\right.\\
& \phantomeq\left.+\frac{\gamma\cdot k}{\alpha}\cdot w_{n}^{-}(m+1,k-1)+\frac{\gamma\cdot(b-k)}{\alpha}\cdot w_{n}^{-}(m+1,k)\right]\\
& \phantomeq-\left[\mu+\frac{\lambda}{\alpha}\cdot w_{n}^{-}(m+1,k)+\frac{\mu}{\alpha}\cdot w_{n}^{-}(m-1,k-1)+\frac{\nu}{\alpha}\cdot w_{n}^{-}(m,k+1)\right.\\
& \phantomeq\left.+\frac{\gamma\cdot k}{\alpha}\cdot w_{n}^{-}(m,k-1)+\frac{\gamma\cdot(b-k)}{\alpha}\cdot w_{n}^{-}(m,k)\right]\\
& =\frac{\lambda}{\alpha}\cdot\underbrace{(w_{n}^{-}(m+2,k)-w_{n}^{-}(m+1,k))}_{\leq\alpha,~~\text{by }\eqref{eq:iso-C}}+\frac{\mu}{\alpha}\cdot\underbrace{(w_{n}^{-}(m,k-1)-w_{n}^{-}(m-1,k-1))}_{\leq\alpha,~~\text{by }\eqref{eq:iso-C}}\\
& \phantomeq+\frac{\nu}{\alpha}\cdot\underbrace{(w_{n}^{-}(m+1,k+1)-w_{n}^{-}(m,k+1))}_{\leq\alpha,~~\text{by }\eqref{eq:iso-C}}+\frac{\gamma\cdot k}{\alpha}\cdot\underbrace{(w_{n}^{-}(m+1,k-1)-w_{n}^{-}(m,k-1))}_{\leq\alpha,~~\text{by }\eqref{eq:iso-C}}\\
& \phantomeq+\frac{\gamma\cdot(b-k)}{\alpha}\cdot\underbrace{(w_{n}^{-}(m+1,k)-w_{n}^{-}(m,k))}_{\leq\alpha,~~\text{by }\eqref{eq:iso-C}}\leq\alpha.
\end{align*}
For $m\geq1$ and $k=b$ it holds
\begin{align*}
& \phantomeq w_{n+1}^{-}(m+1,b)-w_{n+1}^{-}(m,b)\\
& =\left[\mu+\frac{\lambda}{\alpha}\cdot w_{n}^{-}(m+2,b)+\frac{\mu}{\alpha}\cdot w_{n}^{-}(m,b-1)+\frac{\gamma\cdot b}{\alpha}\cdot w_{n}^{-}(m+1,b-1)+\frac{\nu}{\alpha}\cdot w_{n}^{-}(m+1,b)\right]\\
& \phantomeq-\left[\mu+\frac{\lambda}{\alpha}\cdot w_{n}^{-}(m+1,b)+\frac{\mu}{\alpha}\cdot w_{n}^{-}(m-1,b-1)+\frac{\gamma\cdot b}{\alpha}\cdot w_{n}^{-}(m,b-1)+\frac{\nu}{\alpha}\cdot w_{n}^{-}(m,b)\right]\\
& =\frac{\lambda}{\alpha}\cdot\underbrace{(w_{n}^{-}(m+2,b)-w_{n}^{-}(m+1,b))}_{\leq\alpha,~~\text{by }\eqref{eq:iso-C}}+\frac{\mu}{\alpha}\cdot(w_{n}^{-}(m,b-1)-w_{n}^{-}(m-1,b-1))\\
& \phantomeq+\frac{\gamma\cdot b}{\alpha}\cdot\underbrace{(w_{n}^{-}(m+1,b-1)-w_{n}^{-}(m,b-1))}_{\leq\alpha,~~\text{by }\eqref{eq:iso-C}}+\frac{\nu}{\alpha}\cdot\underbrace{(w_{n}^{-}(m+1,b)-w_{n}^{-}(m,b))}_{\leq\alpha,~~\text{by }\eqref{eq:iso-C}}\leq\alpha.
\end{align*}
\noindent$\blacktriangleright$ Fourth, we check \eqref{eq:iso-D}.
For $m=0$ and $k=1$ it holds 
\begin{align*}
& \phantomeq w_{n+1}^{-}(0,1)-w_{n+1}^{-}(0,0)\\
& =\left[\frac{\lambda}{\alpha}\cdot w_{n}^{-}(1,1)+\frac{\nu}{\alpha}\cdot w_{n}^{-}(0,2)+\frac{\gamma}{\alpha}\cdot w_{n}^{-}(0,0)+\frac{\mu+\gamma\cdot(b-1)}{\alpha}\cdot w_{n}^{-}(0,1)\right]\\
& \phantomeq-\left[\frac{\nu}{\alpha}\cdot w_{n}^{-}(0,1)+\frac{\lambda+\mu+\gamma\cdot b}{\alpha}w_{n}^{-}(0,0)\right]\\
& =\frac{\lambda}{\alpha}\cdot\underbrace{(w_{n}^{-}(1,1)-w_{n}^{-}(0,1))}_{\leq\alpha,~~\text{by }\eqref{eq:iso-C}}+\frac{\lambda}{\alpha}\cdot\underbrace{(w_{n}^{-}(0,1)-w_{n}^{-}(0,0))}_{\leq\alpha,~~\text{by }\eqref{eq:iso-D}}+\frac{\nu}{\alpha}\cdot\underbrace{(w_{n}^{-}(0,2)-w_{n}^{-}(0,1))}_{\leq\alpha,~~\text{by }\eqref{eq:iso-D}}\\
& \phantomeq+\frac{\mu+\gamma\cdot(b-1)}{\alpha}\cdot\underbrace{(w_{n}^{-}(0,1)-w_{n}^{-}(0,0))}_{\leq\alpha,~~\text{by }\eqref{eq:iso-D}}\leq(2\cdot \lambda+\mu+\nu+\gamma\cdot (b-1))\underset{(\lambda\leq\gamma)}{\leq}\alpha.
\end{align*}
For $m=0$ and $k\in\{2,\dots,b-1\}$ it holds
\begin{align*}
& \phantomeq w_{n+1}^{-}(0,k)-w_{n+1}^{-}(0,k-1)\\
& =\left[\frac{\lambda}{\alpha}\cdot w_{n}^{-}(1,k)+\frac{\nu}{\alpha}\cdot w_{n}^{-}(0,k+1)+\frac{\gamma\cdot k}{\alpha}\cdot w_{n}^{-}(0,k-1)+\frac{\mu+\gamma\cdot(b-k)}{\alpha}\cdot w_{n}^{-}(0,k)\right]\\
& \phantomeq-\left[\frac{\lambda}{\alpha}\cdot w_{n}^{-}(1,k-1)+\frac{\nu}{\alpha}\cdot w_{n}^{-}(0,k)+\frac{\gamma\cdot(k-1)}{\alpha}\cdot w_{n}^{-}(0,k-2)+\frac{\mu+\gamma\cdot(b-k+1)}{\alpha}\cdot w_{n}^{-}(0,k-1)\right]\\
& =\frac{\lambda}{\alpha}\cdot\underbrace{(w_{n}^{-}(1,k)-w_{n}^{-}(1,k-1))}_{\leq\alpha,~~\text{by }\eqref{eq:iso-D}}+\frac{\nu}{\alpha}\cdot\underbrace{(w_{n}^{-}(0,k+1)-w_{n}^{-}(0,k))}_{\leq\alpha,~~\text{by }\eqref{eq:iso-D}}\\
& \phantomeq+\frac{\gamma\cdot(k-1)}{\alpha}\cdot\underbrace{(w_{n}^{-}(0,k-1)-w_{n}^{-}(0,k-2))}_{\leq\alpha,~~\text{by }\eqref{eq:iso-D}}+\frac{\mu+\gamma\cdot(b-k)}{\alpha}\cdot\underbrace{(w_{n}^{-}(0,k)-w_{n}^{-}(0,k-1))}_{\leq\alpha,~~\text{by }\eqref{eq:iso-D}}\leq\alpha-\gamma.
\end{align*}
\noindent
For $m=0$ and $k=b$ it holds
\begin{align*}
& \phantomeq w_{n+1}^{-}(0,b)-w_{n+1}^{-}(0,b-1)\\
& =\left[\frac{\lambda}{\alpha}\cdot w_{n}^{-}(1,b)+\frac{\gamma\cdot b}{\alpha}\cdot w_{n}^{-}(0,b-1)+\frac{\mu+\nu}{\alpha}\cdot w_{n}^{-}(0,b)\right]\\
& \phantomeq-\left[\frac{\lambda}{\alpha}\cdot w_{n}^{-}(1,b-1)+\frac{\nu}{\alpha}\cdot w_{n}^{-}(0,b)+\frac{\gamma\cdot(b-1)}{\alpha}\cdot w_{n}^{-}(0,b-2)+\frac{\mu+\gamma}{\alpha}\cdot w_{n}^{-}(0,b-1)\right]\\
& =\frac{\lambda}{\alpha}\cdot\underbrace{(w_{n}^{-}(1,b)-w_{n}^{-}(1,b-1))}_{\leq\alpha,~~\text{by }\eqref{eq:iso-D}}+\frac{\gamma\cdot(b-1)}{\alpha}\cdot\underbrace{(w_{n}^{-}(0,b-1)-w_{n}^{-}(0,b-2))}_{\leq\alpha,~~\text{by }\eqref{eq:iso-D}}\\
& \phantomeq+\frac{\mu}{\alpha}\cdot\underbrace{(w_{n}^{-}(0,b)-w_{n}^{-}(0,b-1))}_{\leq\alpha,~~\text{by }\eqref{eq:iso-D}}\leq\alpha-\nu-\gamma.
\end{align*}
For $m\geq1$ and $k=1$ it holds
\begin{align*}
& \phantomeq w_{n+1}^{-}(m,1)-w_{n+1}^{-}(m,0)\\
& =\left[\mu+\frac{\lambda}{\alpha}\cdot w_{n}^{-}(m+1,1)+\frac{\mu}{\alpha}\cdot w_{n}^{-}(m-1,0)+\frac{\nu}{\alpha}\cdot w_{n}^{-}(m,2)+\frac{\gamma}{\alpha}\cdot w_{n}^{-}(m,0)+\frac{\gamma\cdot(b-1)}{\alpha}\cdot w_{n}^{-}(m,1)\right]\\
& \phantomeq-\left[0+\frac{\nu}{\alpha}\cdot w_{n}^{-}(m,1)+\frac{\lambda+\mu+\gamma\cdot b}{\alpha}\cdot w_{n}^{-}(m,0)\right]\\
& =\frac{\lambda}{\alpha}\cdot\underbrace{(w_{n}^{-}(m+1,1)-w_{n}^{-}(m,1))}_{\leq\alpha,~~\text{by }\eqref{eq:iso-C}}+\frac{\lambda}{\alpha}\cdot\underbrace{(w_{n}^{-}(m,1)-w_{n}^{-}(m,0))}_{\leq\alpha,~~\text{by }\eqref{eq:iso-D}}+\frac{\nu}{\alpha}\cdot\underbrace{(w_{n}^{-}(m,2)-w_{n}^{-}(m,1))}_{\leq\alpha,~~\text{by }\eqref{eq:iso-D}}\\
& \phantomeq+\frac{\gamma\cdot(b-1)}{\alpha}\cdot\underbrace{(w_{n}^{-}(m,1)-w_{n}^{-}(m,0))}_{\leq\alpha,~~\text{by }\eqref{eq:iso-D}}+\underbrace{\mu-\frac{\mu}{\alpha}\cdot\underbrace{(w_{n}^{-}(m,0)-w_{n}^{-}(m-1,0))}_{\in[0,\alpha],~~\text{by }\eqref{eq:iso-B},\ \eqref{eq:iso-C}}}_{\leq\mu}\\
& \phantomeq\leq2\cdot\lambda+\nu+\mu+\gamma\cdot (b-1)\underset{(\lambda\leq\gamma)}{\leq}\alpha.
\end{align*}
For $m\geq1$ and $k\in\{2,\dots,b-1\}$ it holds
\begin{align*}
& \phantomeq w_{n+1}^{-}(m,k)-w_{n+1}^{-}(m,k-1)\\
& =\left[\mu+\frac{\lambda}{\alpha}\cdot w_{n}^{-}(m+1,k)+\frac{\mu}{\alpha}\cdot w_{n}^{-}(m-1,k-1)+\frac{\nu}{\alpha}\cdot w_{n}^{-}(m,k+1)\right.\\
& \phantomeq\left.+\frac{\gamma\cdot k}{\alpha}\cdot w_{n}^{-}(m,k-1)+\frac{\gamma\cdot(b-k)}{\alpha}\cdot w_{n}^{-}(m,k)\right]\\
& \phantomeq-\left[\mu+\frac{\lambda}{\alpha}\cdot w_{n}^{-}(m+1,k-1)+\frac{\mu}{\alpha}\cdot w_{n}^{-}(m-1,k-2)+\frac{\nu}{\alpha}\cdot w_{n}^{-}(m,k)\right.\\
& \phantomeq\left.+\frac{\gamma\cdot(k-1)}{\alpha}\cdot w_{n}^{-}(m,k-2)+\frac{\gamma\cdot(b-k+1)}{\alpha}\cdot w_{n}^{-}(m,k-1)\right]\\
& =\frac{\lambda}{\alpha}\cdot\underbrace{(w_{n}^{-}(m+1,k)-w_{n}^{-}(m+1,k-1))}_{\leq\alpha,~~\text{by }\eqref{eq:iso-D}}+\frac{\mu}{\alpha}\cdot\underbrace{(w_{n}^{-}(m-1,k-1)-w_{n}^{-}(m-1,k-2))}_{\leq\alpha,~~\text{by }\eqref{eq:iso-D}}\\
& \phantomeq+\frac{\nu}{\alpha}\cdot\underbrace{(w_{n}^{-}(m,k+1)-w_{n}^{-}(m,k))}_{\leq\alpha,~~\text{by }\eqref{eq:iso-D}}+\frac{\gamma\cdot(k-1)}{\alpha}\cdot\underbrace{(w_{n}^{-}(m,k-1)-w_{n}^{-}(m,k-2))}_{\leq\alpha,~~\text{by }\eqref{eq:iso-D}}\\
& \phantomeq+\frac{\gamma\cdot(b-k)}{\alpha}\cdot\underbrace{(w_{n}^{-}(m,k)-w_{n}^{-}(m,k-1))}_{\leq\alpha,~~\text{by }\eqref{eq:iso-D}}\leq\alpha-\gamma.
\end{align*}
For $m\geq1$ and $k=b$ it holds
\begin{align*}
& \phantomeq w_{n+1}^{-}(m,b)-w_{n+1}^{-}(m,b-1)\\
& =\left[\mu+\frac{\lambda}{\alpha}\cdot w_{n}^{-}(m+1,b)+\frac{\mu}{\alpha}\cdot w_{n}^{-}(m-1,b-1)+\frac{\gamma\cdot b}{\alpha}\cdot w_{n}^{-}(m,b-1)+\frac{\nu}{\alpha}\cdot w_{n}^{-}(m,b)\right]\\
& \phantomeq-\left[\mu+\frac{\lambda}{\alpha}\cdot w_{n}^{-}(m+1,b-1)+\frac{\mu}{\alpha}\cdot w_{n}^{-}(m-1,b-2)+\frac{\nu}{\alpha}\cdot w_{n}^{-}(m,b)\right.\\
& \phantomeq\left.+\frac{\gamma\cdot(b-1)}{\alpha}\cdot w_{n}^{-}(m,b-2)+\frac{\gamma}{\alpha}\cdot w_{n}^{-}(m,b-1)\right]\\
& =\frac{\lambda}{\alpha}\underbrace{\cdot(w_{n}^{-}(m+1,b)-w_{n}^{-}(m+1,b-1))}_{\leq\alpha,~~\text{by }\eqref{eq:iso-D}}+\frac{\mu}{\alpha}\cdot\underbrace{(w_{n}^{-}(m-1,b-1)-w_{n}^{-}(m-1,b-2))}_{\leq\alpha,~~\text{by }\eqref{eq:iso-D}}\\
& \phantomeq+\frac{\gamma\cdot(b-1)}{\alpha}\underbrace{\cdot(w_{n}^{-}(m,b-1)-w_{n}^{-}(m,b-2))}_{\leq\alpha,~~\text{by }\eqref{eq:iso-D}}\leq(\lambda+\mu+\gamma\cdot(b-1))=\alpha-\nu-\gamma.
\end{align*}


\noindent \textbf{(b)} 
We show by induction isotonicity in both directions, that the increase
is bounded, and that $w_{n}^{o}(m,k)$ is concave in $m$ for fixed $k$, this means that for all $n\in\mathbb{N}$ the following holds
\begin{align}
w_{n}^{o}(m,k)-w_{n}^{o}(m,k-1)\geq0, &  & \forall k\in\{1,\dots,b\},\ m\in\mathbb{N}_{0},\label{eq:isooA}\\
w_{n}^{o}(m+1,k)-w_{n}^{o}(m,k)\geq0, &  & \forall k\in\{0,1,\dots,b\},\ m\in\mathbb{N}_{0},\label{eq:isooB}\\
w_{n}^{o}(m+1,k)-w_{n}^{o}(m,k)\leq\alpha, &  & \forall k\in\{0,1,\dots,b\},\ m\in\mathbb{N}_{0},\label{eq:isooC}\\
w_{n}^{o}(m,k)-w_{n}^{o}(m,k-1)\leq\alpha, &  & \forall k\in\{1,\dots,b\},\ m\in\mathbb{N}_{0},\label{eq:isooD}\\
w_{n}^{o}(m+1,k)-2\cdot w_{n}^{o}(m,k)+w_{n}^{o}(m-1,k)\leq0, &  & \forall k\in\{0,1,\dots,b\},\ m\in\mathbb{N}.\hphantom{_{0}}\label{eq:isooE}
\end{align}
For $n=1$ we have $w_{1}^{o}(m,k)=r(m,k)=\mu\cdot1_{\left\{ m>0\right\} }\cdot1_{\left\{ k>0\right\}}$ for all $k\in\{0,1,\dots,b\},\ m\in\mathbb{N}_{0},$
so \eqref{eq:isooA}--\eqref{eq:isooD} are trivially true.\\
Let $n\in\mathbb{N}$ such that \eqref{eq:isooA}--\eqref{eq:isooE} hold.
We shall verify these properties for $n$ replaced by $n+1$. In any case we
exploit again $w_{n+1}^{o}=r+R^{o}\cdot w_{n}^{o}$, $n\geq1$.\\
$\blacktriangleright$ First, we check \eqref{eq:isooA}.
For $m=0$ and $k=1$ it holds
\begin{align*}
& \phantomeq w_{n+1}^{o}(0,1)-w_{n+1}^{o}(0,0)\\
& =\left[\frac{\lambda}{\alpha}\cdot w_{n}^{o}(1,1)+\frac{\nu}{\alpha}\cdot w_{n}^{o}(0,2)+\frac{\gamma}{\alpha}\cdot w_{n}^{o}(0,0)+\frac{\mu+\gamma\cdot(b-1)}{\alpha}\cdot w_{n}^{o}(0,1)\right]\\
& \phantomeq-\left[\frac{\nu}{\alpha}\cdot w_{n}^{o}(0,1)+\frac{\lambda+\mu+\gamma\cdot b}{\alpha}\cdot w_{n}^{o}(0,0)\right]\\
& =\frac{\lambda}{\alpha}\cdot\underbrace{(w_{n}^{o}(1,1)-w_{n}^{o}(0,1))}_{\geq0,~~\text{by }\eqref{eq:isooB}}+\frac{\lambda}{\alpha}\cdot\underbrace{(w_{n}^{o}(0,1)-w_{n}^{o}(0,0))}_{\geq0,~~\text{by }\eqref{eq:isooA}}+\frac{\nu}{\alpha}\cdot\underbrace{(w_{n}^{o}(0,2)-w_{n}^{o}(0,1))}_{\geq0,~~\text{by }\eqref{eq:isooA}}\\
& \phantomeq+\frac{\mu+\gamma\cdot(b-1)}{\alpha}\cdot\underbrace{(w_{n}^{o}(0,1)-w_{n}^{o}(0,0))}_{\geq0,~~\text{by }\eqref{eq:isooA}}\geq0.
\end{align*}
For $m=0$ and $k\in\{2,\dots,b-1\}$ it holds
\begin{align*}
& \phantomeq w_{n+1}^{o}(0,k)-w_{n+1}^{o}(0,k-1)\\
& =\left[\frac{\lambda}{\alpha}\cdot w_{n}^{o}(1,k)+\frac{\nu}{\alpha}\cdot w_{n}^{o}(0,k+1)+\frac{\gamma\cdot k}{\alpha}\cdot w_{n}^{o}(0,k-1)+\frac{\mu+\gamma\cdot(b-k)}{\alpha}\cdot w_{n}^{o}(0,k)\right]\\
& \phantomeq-\left[\frac{\lambda}{\alpha}\cdot w_{n}^{o}(1,k-1)+\frac{\nu}{\alpha}\cdot w_{n}^{o}(0,k)+\frac{\gamma\cdot(k-1)}{\alpha}\cdot w_{n}^{o}(0,k-2)
+\frac{\mu+\gamma\cdot(b-k+1)}{\alpha}\cdot w_{n}^{o}(0,k-1)\right]\\
& =\frac{\lambda}{\alpha}\cdot\underbrace{(w_{n}^{o}(1,k)-w_{n}^{o}(1,k-1))}_{\geq0,~~\text{by }\eqref{eq:isooA}}+\frac{\nu}{\alpha}\cdot\underbrace{(w_{n}^{o}(0,k+1)-w_{n}^{o}(0,k))}_{\geq0,~~\text{by }\eqref{eq:isooA}}\\
& \phantomeq+\frac{\gamma\cdot(k-1)}{\alpha}\cdot\underbrace{(w_{n}^{o}(0,k-1)-w_{n}^{o}(0,k-2))}_{\geq0,~~\text{by }\eqref{eq:isooA}}
+\frac{\mu+\gamma\cdot(b-k)}{\alpha}\cdot\underbrace{(w_{n}^{o}(0,k)-w_{n}^{o}(0,k-1))}_{\geq0,~~\text{by }\eqref{eq:isooA}}\geq0.
\end{align*}
For $m=0$ and $k=b$ it holds
\begin{align*}
& \phantomeq w_{n+1}^{o}(0,b)-w_{n+1}^{o}(0,b-1)\\
& =\left[\frac{\lambda}{\alpha}\cdot w_{n}^{o}(1,b)+\frac{\gamma\cdot b}{\alpha}\cdot w_{n}^{o}(0,b-1)+\frac{\mu+\nu}{\alpha}\cdot w_{n}^{o}(0,b)\right]\\
& \phantomeq-\left[\frac{\lambda}{\alpha}\cdot w_{n}^{o}(1,b-1)+\frac{\nu}{\alpha}\cdot w_{n}^{o}(0,b)+\frac{\gamma\cdot(b-1)}{\alpha}\cdot w_{n}^{o}(0,b-2)+\frac{\mu+\gamma}{\alpha}\cdot w_{n}^{o}(0,b-1)\right]\\
& =\frac{\lambda}{\alpha}\cdot\underbrace{(w_{n}^{o}(1,b)-w_{n}^{o}(1,b-1))}_{\geq0,~~\text{by }\eqref{eq:isooA}}+\frac{\gamma\cdot(b-1)}{\alpha}\cdot\underbrace{(w_{n}^{o}(0,b-1)-w_{n}^{o}(0,b-2))}_{\geq0,~~\text{by }\eqref{eq:isooA}}\\
& \phantomeq+\frac{\mu}{\alpha}\cdot\underbrace{(w_{n}^{o}(0,b)-w_{n}^{o}(0,b-1))}_{\geq0,~~\text{by }\eqref{eq:isooA}}\geq0.
\end{align*}

\noindent
For $m\geq1$ and $k=1$ it holds
\begin{align*}
& \phantomeq w_{n+1}^{o}(m,1)-w_{n+1}^{o}(m,0)\\
& =\left[\mu+\frac{\lambda}{\alpha}\cdot w_{n}^{o}(m+1,1)+\frac{\mu}{\alpha}\cdot w_{n}^{o}(m-1,0)+\frac{\nu}{\alpha}\cdot w_{n}^{o}(m,2)+\frac{\gamma\cdot b}{\alpha}\cdot w_{n}^{o}(m,1)\right]\\
& \phantomeq-\left[0+\frac{\nu}{\alpha}\cdot w_{n}^{o}(m,1)+\frac{\lambda+\mu+\gamma\cdot b}{\alpha}\cdot w_{n}^{o}(m,0)\right]\\
& =\frac{\lambda}{\alpha}\cdot\underbrace{(w_{n}^{o}(m+1,1)-w_{n}^{o}(m,1))}_{\geq0,~~\text{by }\eqref{eq:isooB}}+\frac{\lambda}{\alpha}\cdot\underbrace{(w_{n}^{o}(m,1)-w_{n}^{o}(m,0))}_{\geq0,~~\text{by }\eqref{eq:isooA}}+\frac{\nu}{\alpha}\cdot\underbrace{(w_{n}^{o}(m,2)-w_{n}^{o}(m,1))}_{\geq0,~~\text{by }\eqref{eq:isooA}}\\
& \phantomeq+\frac{\gamma\cdot b}{\alpha}\cdot\underbrace{(w_{n}^{o}(m,1)-w_{n}^{o}(m,0))}_{\geq0,~~\text{by }\eqref{eq:isooA}}+\underbrace{\mu-\frac{\mu}{\alpha}\cdot\underbrace{(w_{n}^{o}(m,0)-w_{n}^{o}(m-1,0))}_{\in[0,\alpha],~~\text{by }\eqref{eq:isooB},\ \eqref{eq:isooC}}}_{\geq0}\geq0.
\end{align*}
For $m\geq1$ and $k\in\{2,\dots,b-1\}$ it holds
\begin{align*}
& \phantomeq w_{n+1}^{o}(m,k)-w_{n+1}^{o}(m,k-1)\\
& =\left[\mu+\frac{\lambda}{\alpha}\cdot w_{n}^{o}(m+1,k)+\frac{\mu}{\alpha}\cdot w_{n}^{o}(m-1,k-1)+\frac{\nu}{\alpha}\cdot w_{n}^{o}(m,k+1)\right.\\
& \phantomeq\left.+\frac{\gamma\cdot(k-1)}{\alpha}\cdot w_{n}^{o}(m,k-1)+\frac{\gamma\cdot(b-k+1)}{\alpha}\cdot w_{n}^{o}(m,k)\right]\\
& \phantomeq-\left[\mu+\frac{\lambda}{\alpha}\cdot w_{n}^{o}(m+1,k-1)+\frac{\mu}{\alpha}\cdot w_{n}^{o}(m-1,k-2)+\frac{\nu}{\alpha}\cdot w_{n}^{o}(m,k)\right.\\
& \phantomeq\left.+\frac{\gamma\cdot(k-2)}{\alpha}\cdot w_{n}^{o}(m,k-2)+\frac{\gamma\cdot(b-k+2)}{\alpha}\cdot w_{n}^{o}(m,k-1)\right]\\
& =\frac{\lambda}{\alpha}\cdot\underbrace{(w_{n}^{o}(m+1,k)-w_{n}^{o}(m+1,k-1))}_{\geq0,~~\text{by }\eqref{eq:isooA}}+\frac{\mu}{\alpha}\cdot\underbrace{(w_{n}^{o}(m-1,k-1)-w_{n}^{o}(m-1,k-2))}_{\geq0,~~\text{by }\eqref{eq:isooA}}\\
& \phantomeq+\frac{\nu}{\alpha}\cdot\underbrace{(w_{n}^{o}(m,k+1)-w_{n}^{o}(m,k))}_{\geq0,~~\text{by }\eqref{eq:isooA}}+\frac{\gamma\cdot(k-2)}{\alpha}\cdot\underbrace{(w_{n}^{o}(m,k-1)-w_{n}^{o}(m,k-2))}_{\geq0,~~\text{by }\eqref{eq:isooA}}\\
& \phantomeq+\frac{\gamma\cdot(b-k+1)}{\alpha}\cdot\underbrace{(w_{n}^{o}(m,k)-w_{n}^{o}(m,k-1))}_{\leq\alpha,~~\text{by }\eqref{eq:isooA}}\geq0.
\end{align*}

\noindent
For $m\geq1$ and $k=b$ it holds
\begin{align*}
& \phantomeq w_{n+1}^{o}(m,b)-w_{n+1}^{o}(m,b-1)\\
& =\left[\mu+\frac{\lambda}{\alpha}\cdot w_{n}^{o}(m+1,b)+\frac{\mu}{\alpha}\cdot w_{n}^{o}(m-1,b-1)+\frac{\gamma\cdot(b-1)}{\alpha}\cdot w_{n}^{o}(m,b-1)+\frac{\nu+\gamma}{\alpha}\cdot w_{n}^{o}(m,b)\right]\\
& \phantomeq-\left[\mu+\frac{\lambda}{\alpha}\cdot w_{n}^{o}(m+1,b-1)+\frac{\mu}{\alpha}\cdot w_{n}^{o}(m-1,b-2)+\frac{\nu}{\alpha}\cdot w_{n}^{o}(m,b)\right.\\
& \phantomeq\left.+\frac{\gamma\cdot(b-2)}{\alpha}\cdot w_{n}^{o}(m,b-2)+\frac{\gamma\cdot2}{\alpha}\cdot w_{n}^{o}(m,b-1)\right]\\
& =\frac{\lambda}{\alpha}\cdot\underbrace{(w_{n}^{o}(m+1,b)-w_{n}^{o}(m+1,b-1))}_{\geq0,~~\text{by }\eqref{eq:isooA}}+\frac{\mu}{\alpha}\cdot\underbrace{(w_{n}^{o}(m-1,b-1)-w_{n}^{o}(m-1,b-2))}_{\geq0,~~\text{by }\eqref{eq:isooA}}\\
& \phantomeq+\frac{\gamma\cdot(b-2)}{\alpha}\cdot\underbrace{(w_{n}^{o}(m,b-1)-w_{n}^{o}(m,b-2))}_{\geq0,~~\text{by }\eqref{eq:isooA}}+\frac{\gamma}{\alpha}\cdot\underbrace{(w_{n}^{o}(m,b)-w_{n}^{o}(m,b-1))}_{\geq0,~~\text{by }\eqref{eq:isooA}}\geq0.
\end{align*}
$\blacktriangleright$ Second, we check \eqref{eq:isooB}.
For $m=0$ and $k=0$ it holds 
\begin{align*}
& \phantomeq w_{n+1}^{o}(m+1,0)-w_{n+1}^{o}(m,0)\\
& =\left[\frac{\nu}{\alpha}\cdot w_{n}^{o}(m+1,1)+\frac{\lambda+\mu+\gamma\cdot b}{\alpha}\cdot w_{n}^{o}(m+1,0)\right]-\left[\frac{\nu}{\alpha}\cdot w_{n}^{o}(m,1)+\frac{\lambda+\mu+\gamma\cdot b}{\alpha}\cdot w_{n}^{o}(m,0)\right]\\
& =\frac{\nu}{\alpha}\cdot\underbrace{(w_{n}^{o}(m+1,1)-w_{n}^{o}(m,1))}_{\geq0,~~\text{by }\eqref{eq:isooB}}+\frac{\lambda+\mu+\gamma\cdot b}{\alpha}\cdot\underbrace{(w_{n}^{o}(m+1,0)-w_{n}^{o}(m,0))}_{\geq0,~~\text{by }\eqref{eq:isooB}}\geq0.
\end{align*}
For $m=0$ and $k\in\{1,\dots,b-1\}$ it holds
\begin{align*}
& \phantomeq w_{n+1}^{o}(1,k)-w_{n+1}^{o}(0,k)\\
& =\left[\mu+\frac{\lambda}{\alpha}\cdot w_{n}^{o}(2,k)+\frac{\mu}{\alpha}\cdot w_{n}^{o}(0,k-1)+\frac{\nu}{\alpha}\cdot w_{n}^{o}(1,k+1)\right.\\
& \phantomeq\left.+\frac{\gamma\cdot(k-1)}{\alpha}\cdot w_{n}^{o}(1,k-1)+\frac{\gamma\cdot(b-k+1)}{\alpha}\cdot w_{n}^{o}(1,k)\right]\\
& \phantomeq-\left[0+\frac{\lambda}{\alpha}\cdot w_{n}^{o}(1,k)+\frac{\nu}{\alpha}\cdot w_{n}^{o}(0,k+1)+\frac{\gamma\cdot k}{\alpha}\cdot w_{n}^{o}(0,k-1)+\frac{\mu+\gamma\cdot(b-k)}{\alpha}\cdot w_{n}^{o}(0,k)\right]\\
& =\frac{\lambda}{\alpha}\cdot\underbrace{(w_{n}^{o}(2,k)-w_{n}^{o}(1,k))}_{\geq0,~~\text{by }\eqref{eq:isooB}}+\frac{\nu}{\alpha}\cdot\underbrace{(w_{n}^{o}(1,k+1)-w_{n}^{o}(0,k+1))}_{\geq0,~~\text{by }\eqref{eq:isooB}}\\
& \phantomeq+\frac{\gamma\cdot(k-1)}{\alpha}\cdot\underbrace{(w_{n}^{o}(1,k-1)-w_{n}^{o}(0,k-1))}_{\geq0,~~\text{by }\eqref{eq:isooB}}+\frac{\gamma\cdot(b-k)}{\alpha}\cdot\underbrace{(w_{n}^{o}(1,k)-w_{n}^{o}(0,k))}_{\geq0,~~\text{by }\eqref{eq:isooB}}\\
& \phantomeq+\mu-\frac{\mu}{\alpha}\cdot\underbrace{(w_{n}^{o}(0,k)-w_{n}^{o}(0,k-1))}_{\geq0,~~\text{by }\eqref{eq:isooA}}+\frac{\gamma}{\alpha}\cdot\underbrace{(w_{n}^{o}(1,k)-w_{n}^{o}(0,k))}_{\geq0,~~\text{by }\eqref{eq:isooB}}+\frac{\gamma}{\alpha}\cdot\underbrace{(w_{n}^{o}(0,k)-w_{n}^{o}(0,k-1))}_{\geq0,~~\text{by }\eqref{eq:isooA}}\geq0.
\end{align*}

\noindent
For $m=0$ and $k=b$ it holds
\begin{align*}
& \phantomeq w_{n+1}^{o}(1,b)-w_{n+1}^{o}(0,b)\\
& =\left[\mu+\frac{\lambda}{\alpha}\cdot w_{n}^{o}(2,b)+\frac{\mu}{\alpha}\cdot w_{n}^{o}(0,b-1)+\frac{\gamma\cdot(b-1)}{\alpha}\cdot w_{n}^{o}(1,b-1)+\frac{\nu+\gamma}{\alpha}\cdot w_{n}^{o}(1,b)\right]\\
& \phantomeq-\left[0+\frac{\lambda}{\alpha}\cdot w_{n}^{o}(1,b)+\frac{\gamma\cdot b}{\alpha}\cdot w_{n}^{o}(0,b-1)+\frac{\mu+\nu}{\alpha}\cdot w_{n}^{o}(0,b)\right]\\
& =\frac{\lambda}{\alpha}\cdot\underbrace{(w_{n}^{o}(2,b)-w_{n}^{o}(1,b))}_{\geq0,~~\text{by }\eqref{eq:isooB}}+\frac{\nu}{\alpha}\cdot\underbrace{(w_{n}^{o}(1,b)-w_{n}^{o}(0,b))}_{\geq0,~~\text{by }\eqref{eq:isooB}}\\
& \phantomeq+\frac{\gamma\cdot b}{\alpha}\cdot\underbrace{(w_{n}^{o}(1,b-1)-w_{n}^{o}(0,b-1))}_{\geq0,~~\text{by }\eqref{eq:isooB}}+\underbrace{\mu+\frac{\mu}{\alpha}\cdot\underbrace{(w_{n}^{o}(0,b)-w_{n}^{o}(0,b-1))}_{\in[0,\alpha],~~\text{by }\eqref{eq:isooA},\eqref{eq:isooD}}}_{\geq0}\\
& \phantomeq+\frac{\gamma}{\alpha}\cdot\underbrace{(w_{n}^{o}(1,b)-w_{n}^{o}(1,b-1))}_{\geq0,~~\text{by }\eqref{eq:isooA}}\geq0.
\end{align*}
For $m\geq1$ and $k\in\{1,\dots,b-1\}$ it holds
\begin{align*}
& \phantomeq w_{n+1}^{o}(m+1,k)-w_{n+1}^{o}(m,k)\\
& =\left[\mu+\frac{\lambda}{\alpha}\cdot w_{n}^{o}(m+2,k)+\frac{\mu}{\alpha}\cdot w_{n}^{o}(m,k-1)+\frac{\nu}{\alpha}\cdot w_{n}^{o}(m+1,k+1)\right.\\
& \phantomeq\left.+\frac{\gamma\cdot(k-1)}{\alpha}\cdot w_{n}^{o}(m+1,k-1)+\frac{\gamma\cdot(b-k+1)}{\alpha}\cdot w_{n}^{o}(m+1,k)\right]\\
& \phantomeq-\left[\mu+\frac{\lambda}{\alpha}\cdot w_{n}^{o}(m+1,k)+\frac{\mu}{\alpha}\cdot w_{n}^{o}(m-1,k-1)+\frac{\nu}{\alpha}\cdot w_{n}^{o}(m,k+1)\right.\\
& \phantomeq\left.+\frac{\gamma\cdot(k-1)}{\alpha}\cdot w_{n}^{o}(m,k-1)+\frac{\gamma\cdot(b-k+1)}{\alpha}\cdot w_{n}^{o}(m,k)\right]\\
& =\frac{\lambda}{\alpha}\cdot\underbrace{(w_{n}^{o}(m+2,k)-w_{n}^{o}(m+1,k))}_{\geq0,~~\text{by }\eqref{eq:isooB}}+\frac{\mu}{\alpha}\cdot\underbrace{(w_{n}^{o}(m,k-1)-w_{n}^{o}(m-1,k-1))}_{\geq0,~~\text{by }\eqref{eq:isooB}}\\
& \phantomeq+\frac{\nu}{\alpha}\cdot\underbrace{(w_{n}^{o}(m+1,k+1)-w_{n}^{o}(m,k+1))}_{\geq0,~~\text{by }\eqref{eq:isooB}}+\frac{\gamma\cdot(k-1)}{\alpha}\cdot\underbrace{(w_{n}^{o}(m+1,k-1)-w_{n}^{o}(m,k-1))}_{\geq0,~~\text{by }\eqref{eq:isooB}}\\
& \phantomeq+\frac{\gamma\cdot(b-k+1)}{\alpha}\cdot\underbrace{(w_{n}^{o}(m+1,k)-w_{n}^{o}(m,k))}_{\geq0,~~\text{by }\eqref{eq:isooB}}\geq0.
\end{align*}

\noindent
For $m\geq1$ and $k=b$ it holds
\begin{align*}
& \phantomeq w_{n+1}^{o}(m+1,b)-w_{n+1}^{o}(m,b)\\
& =\left[\mu+\frac{\lambda}{\alpha}\cdot w_{n}^{o}(m+2,b)+\frac{\mu}{\alpha}\cdot w_{n}^{o}(m,b-1)+\frac{\gamma\cdot(b-1)}{\alpha}\cdot w_{n}^{o}(m+1,b-1)+\frac{\nu+\gamma}{\alpha}\cdot w_{n}^{o}(m+1,b)\right]\\
& \phantomeq-\left[\mu+\frac{\lambda}{\alpha}\cdot w_{n}^{o}(m+1,b)+\frac{\mu}{\alpha}\cdot w_{n}^{o}(m-1,b-1)+\frac{\gamma\cdot(b-1)}{\alpha}\cdot w_{n}^{o}(m,b-1)+\frac{\nu+\gamma}{\alpha}\cdot w_{n}^{o}(m,b)\right]\\
& =\frac{\lambda}{\alpha}\cdot\underbrace{(w_{n}^{o}(m+2,b)-w_{n}^{o}(m+1,b))}_{\geq0,~~\text{by }\eqref{eq:isooB}}+\frac{\mu}{\alpha}\cdot\underbrace{(w_{n}^{o}(m,b-1)-w_{n}^{o}(m-1,b-1))}_{\geq0,~~\text{by }\eqref{eq:isooB}}\\
& \phantomeq+\frac{\gamma\cdot(b-1)}{\alpha}\cdot\underbrace{(w_{n}^{o}(m+1,b-1)-w_{n}^{o}(m,b-1))}_{\geq0,~~\text{by }\eqref{eq:isooB}}+\frac{\nu+\gamma}{\alpha}\cdot\underbrace{(w_{n}^{o}(m+1,b)-w_{n}^{o}(m,b))}_{\geq0,~~\text{by }\eqref{eq:isooB}}\geq0.
\end{align*}
$\blacktriangleright$ Third, we check \eqref{eq:isooC}.
For $m\geq0$ and $k=0$ it holds 
\begin{align*}
& \phantomeq w_{n+1}^{o}(m+1,0)-w_{n+1}^{o}(m,0)\\
& =\left[\frac{\nu}{\alpha}\cdot w_{n}^{o}(m+1,1)+\frac{\lambda+\mu+\gamma\cdot b}{\alpha}\cdot w_{n}^{o}(m+1,0)\right]-\left[\frac{\nu}{\alpha}\cdot w_{n}^{o}(m,1)+\frac{\lambda+\mu+\gamma\cdot b}{\alpha}\cdot w_{n}^{o}(m,0)\right]\\
& =\frac{\nu}{\alpha}\cdot\underbrace{(w_{n}^{o}(m+1,1)-w_{n}^{o}(m,1))}_{\leq\alpha,~~\text{by }\eqref{eq:isooC}}+\frac{\lambda+\mu+\gamma\cdot b}{\alpha}\cdot\underbrace{(w_{n}^{o}(m+1,0)-w_{n}^{o}(m,0))}_{\leq\alpha,~~\text{by }\eqref{eq:isooC}}\leq\alpha.
\end{align*}
For $m=0$ and $k\in\{1,\dots,b-1\}$ it holds
\begin{align*}
& \phantomeq w_{n+1}^{o}(1,k)-w_{n+1}^{o}(0,k)\\
& =\left[\mu+\frac{\lambda}{\alpha}\cdot w_{n}^{o}(2,k)+\frac{\mu}{\alpha}\cdot w_{n}^{o}(0,k-1)+\frac{\nu}{\alpha}\cdot w_{n}^{o}(1,k+1)\right.\\
& \phantomeq\left.+\frac{\gamma\cdot(k-1)}{\alpha}\cdot w_{n}^{o}(1,k-1)+\frac{\gamma\cdot(b-k+1)}{\alpha}\cdot w_{n}^{o}(1,k)\right]\\
& \phantomeq-\left[0+\frac{\lambda}{\alpha}\cdot w_{n}^{o}(1,k)+\frac{\nu}{\alpha}\cdot w_{n}^{o}(0,k+1)+\frac{\gamma\cdot k}{\alpha}\cdot w_{n}^{o}(0,k-1)+\frac{\mu+\gamma\cdot(b-k)}{\alpha}\cdot w_{n}^{o}(0,k)\right]\\
& =\frac{\lambda}{\alpha}\cdot\underbrace{(w_{n}^{o}(2,k)-w_{n}^{o}(1,k))}_{\leq\alpha,~~\text{by }\eqref{eq:isooC}}+\frac{\nu}{\alpha}\cdot\underbrace{(w_{n}^{o}(1,k+1)-w_{n}^{o}(0,k+1))}_{\leq\alpha,~~\text{by }\eqref{eq:isooC}}\\
& \phantomeq+\frac{\gamma\cdot(k-1)}{\alpha}\cdot\underbrace{(w_{n}^{o}(1,k-1)-w_{n}^{o}(0,k-1))}_{\leq\alpha,~~\text{by }\eqref{eq:isooC}}+\frac{\gamma\cdot(b-k+1)}{\alpha}\cdot\underbrace{(w_{n}^{o}(1,k)-w_{n}^{o}(0,k))}_{\leq\alpha,~~\text{by }\eqref{eq:isooC}}\\
& \phantomeq+\mu+\underbrace{(\frac{\gamma}{\alpha}-\frac{\mu}{\alpha})}_{=0,\ \text{by }\gamma=\mu}\cdot(w_{n}^{o}(0,k)-w_{n}^{o}(0,k-1))\leq\alpha.
\end{align*}

\noindent
For $m=0$ and $k=b$ it holds
\begin{align*}
& \phantomeq w_{n+1}^{o}(1,b)-w_{n+1}^{o}(0,b)\\
& =\left[\mu+\frac{\lambda}{\alpha}\cdot w_{n}^{o}(2,b)+\frac{\mu}{\alpha}\cdot w_{n}^{o}(0,b-1)+\frac{\gamma\cdot(b-1)}{\alpha}\cdot w_{n}^{o}(1,b-1)+\frac{\nu+\gamma}{\alpha}\cdot w_{n}^{o}(1,b)\right]\\
& \phantomeq-\left[0+\frac{\lambda}{\alpha}\cdot w_{n}^{o}(1,b)+\frac{\gamma\cdot b}{\alpha}\cdot w_{n}^{o}(0,b-1)+\frac{\mu+\nu}{\alpha}\cdot w_{n}^{o}(0,b)\right]\\
& =\frac{\lambda}{\alpha}\cdot\underbrace{(w_{n}^{o}(2,b)-w_{n}^{o}(1,b))}_{\leq\alpha,~~\text{by }\eqref{eq:isooC}}+\frac{\gamma\cdot(b-1)}{\alpha}\cdot\underbrace{(w_{n}^{o}(1,b-1)-w_{n}^{o}(0,b-1))}_{\leq\alpha,~~\text{by }\eqref{eq:isooC}}\\
& \phantomeq+\frac{\nu+\gamma}{\alpha}\cdot\underbrace{(w_{n}^{o}(1,b)-w_{n}^{o}(0,b))}_{\leq\alpha,~~\text{by }\eqref{eq:isooC}}+\mu+\underbrace{(\frac{\gamma}{\alpha}-\frac{\mu}{\alpha})}_{=0,\ \text{by }\gamma=\mu}\cdot(w_{n}^{o}(0,b)-w_{n}^{o}(0,b-1))\leq\alpha.
\end{align*}
For $m\geq1$ and $k\in\{1,\dots,b-1\}$ it holds
\begin{align*}
& \phantomeq w_{n+1}^{o}(m+1,k)-w_{n+1}^{o}(m,k)\\
& =\left[\mu+\frac{\lambda}{\alpha}\cdot w_{n}^{o}(m+2,k)+\frac{\mu}{\alpha}\cdot w_{n}^{o}(m,k-1)+\frac{\nu}{\alpha}\cdot w_{n}^{o}(m+1,k+1)\right.\\
& \phantomeq\left.+\frac{\gamma\cdot(k-1)}{\alpha}\cdot w_{n}^{o}(m+1,k-1)+\frac{\gamma\cdot(b-k+1)}{\alpha}\cdot w_{n}^{o}(m+1,k)\right]\\
& \phantomeq-\left[\mu+\frac{\lambda}{\alpha}\cdot w_{n}^{o}(m+1,k)+\frac{\mu}{\alpha}\cdot w_{n}^{o}(m-1,k-1)+\frac{\nu}{\alpha}\cdot w_{n}^{o}(m,k+1)\right.\\
& \phantomeq\left.+\frac{\gamma\cdot(k-1)}{\alpha}\cdot w_{n}^{o}(m,k-1)+\frac{\gamma\cdot(b-k+1)}{\alpha}\cdot w_{n}^{o}(m,k)\right]\\
& =\frac{\lambda}{\alpha}\cdot\underbrace{(w_{n}^{o}(m+2,k)-w_{n}^{o}(m+1,k))}_{\leq\alpha,~~\text{by }\eqref{eq:isooC}}+\frac{\mu}{\alpha}\cdot\underbrace{(w_{n}^{o}(m,k-1)-w_{n}^{o}(m-1,k-1))}_{\leq\alpha,~~\text{by }\eqref{eq:isooC}}\\
& \phantomeq+\frac{\nu}{\alpha}\cdot\underbrace{(w_{n}^{o}(m+1,k+1)-w_{n}^{o}(m,k+1))}_{\leq\alpha,~~\text{by }\eqref{eq:isooC}}+\frac{\gamma\cdot(k-1)}{\alpha}\cdot\underbrace{(w_{n}^{o}(m+1,k-1)-w_{n}^{o}(m,k-1))}_{\leq\alpha,~~\text{by }\eqref{eq:isooC}}\\
& \phantomeq+\frac{\gamma\cdot(b-k+1)}{\alpha}\cdot\underbrace{(w_{n}^{o}(m+1,k)-w_{n}^{o}(m,k))}_{\leq\alpha,~~\text{by }\eqref{eq:isooC}}\leq\alpha.
\end{align*}

\noindent
For $m\geq1$ and $k=b$ it holds
\begin{align*}
& \phantomeq w_{n+1}^{o}(m+1,b)-w_{n+1}^{o}(m,b)\\
& =\left[\mu+\frac{\lambda}{\alpha}\cdot w_{n}^{o}(m+2,b)+\frac{\mu}{\alpha}\cdot w_{n}^{o}(m,b-1)+\frac{\gamma\cdot(b-1)}{\alpha}\cdot w_{n}^{o}(m+1,b-1)+\frac{\nu+\gamma}{\alpha}\cdot w_{n}^{o}(m+1,b)\right]\\
& \phantomeq-\left[\mu+\frac{\lambda}{\alpha}\cdot w_{n}^{o}(m+1,b)+\frac{\mu}{\alpha}\cdot w_{n}^{o}(m-1,b-1)+\frac{\gamma\cdot(b-1)}{\alpha}\cdot w_{n}^{o}(m,b-1)+\frac{\nu+\gamma}{\alpha}\cdot w_{n}^{o}(m,b)\right]\\
& =\frac{\lambda}{\alpha}\cdot\underbrace{(w_{n}^{o}(m+2,b)-w_{n}^{o}(m+1,b))}_{\leq\alpha,~~\text{by }\eqref{eq:isooC}}+\frac{\mu}{\alpha}\cdot\underbrace{(w_{n}^{o}(m,b-1)-w_{n}^{o}(m-1,b-1))}_{\leq\alpha,~~\text{by }\eqref{eq:isooD}}\\
& \phantomeq+\frac{\gamma\cdot(b-1)}{\alpha}\cdot\underbrace{(w_{n}^{o}(m+1,b-1)-w_{n}^{o}(m,b-1))}_{\leq\alpha,~~\text{by }\eqref{eq:isooC}}+\frac{\nu+\gamma}{\alpha}\cdot\underbrace{(w_{n}^{o}(m+1,b)-w_{n}^{o}(m,b))}_{\leq\alpha,~~\text{by }\eqref{eq:isooC}}\leq\alpha.
\end{align*}
$\blacktriangleright$ Fourth, we check \eqref{eq:isooD}.
For $m=0$ and $k=1$ it holds 
\begin{align*}
& \phantomeq w_{n+1}^{o}(0,1)-w_{n+1}^{o}(0,0)\\
& =\left[\frac{\lambda}{\alpha}\cdot w_{n}^{o}(1,1)+\frac{\nu}{\alpha}\cdot w_{n}^{o}(0,2)+\frac{\gamma}{\alpha}\cdot w_{n}^{o}(0,0)+\frac{\mu+\gamma\cdot(b-1)}{\alpha}\cdot w_{n}^{o}(0,1)\right]\\
& \phantomeq-\left[\frac{\nu}{\alpha}\cdot w_{n}^{o}(0,1)+\frac{\lambda+\mu+\gamma\cdot b}{\alpha}\cdot w_{n}^{o}(0,0)\right]\\
& =\frac{\lambda}{\alpha}v\underbrace{(w_{n}^{o}(1,1)-w_{n}^{o}(0,1))}_{\leq\alpha,~~\text{by }\eqref{eq:isooC}}+\frac{\lambda}{\alpha}\cdot\underbrace{(w_{n}^{o}(0,1)-w_{n}^{o}(0,0))}_{\leq\alpha,~~\text{by }\eqref{eq:isooD}}+\frac{\nu}{\alpha}\cdot\underbrace{(w_{n}^{o}(0,2)-w_{n}^{o}(0,1))}_{\leq\alpha,~~\text{by }\eqref{eq:isooD}}\\
& \phantomeq+\frac{\mu+\gamma\cdot(b-1)}{\alpha}\cdot\underbrace{(w_{n}^{o}(0,1)-w_{n}^{o}(0,0))}_{\leq\alpha,~~\text{by }\eqref{eq:isooD}}\leq(2\lambda+\mu+\nu+\gamma\cdot(b-1))\stackrel{(\lambda\leq\gamma)}{\leq}\alpha.
\end{align*}

\noindent
For $m=0$ and $k\in\{2,\dots,b-1\}$ it holds
\begin{align*}
& \phantomeq w_{n+1}^{o}(0,k)-w_{n+1}^{o}(0,k-1)\\
& =\left[\frac{\lambda}{\alpha}\cdot w_{n}^{o}(1,k)+\frac{\nu}{\alpha}\cdot w_{n}^{o}(0,k+1)+\frac{\gamma\cdot k}{\alpha}\cdot w_{n}^{o}(0,k-1)+\frac{\mu+\gamma\cdot(b-k)}{\alpha}\cdot w_{n}^{o}(0,k)\right]\\
& \phantomeq-\left[\frac{\lambda}{\alpha}\cdot w_{n}^{o}(1,k-1)+\frac{\nu}{\alpha}\cdot w_{n}^{o}(0,k)+\frac{\gamma\cdot(k-1)}{\alpha}\cdot w_{n}^{o}(0,k-2)+\frac{\mu+\gamma\cdot(b-k+1)}{\alpha}\cdot w_{n}^{o}(0,k-1)\right]\\
& =\frac{\lambda}{\alpha}\cdot\underbrace{(w_{n}^{o}(1,k)-w_{n}^{o}(1,k-1))}_{\leq\alpha,~~\text{by }\eqref{eq:isooD}}+\frac{\nu}{\alpha}\cdot\underbrace{(w_{n}^{o}(0,k+1)-w_{n}^{o}(0,k))}_{\leq\alpha,~~\text{by }\eqref{eq:isooD}}\\
& \phantomeq+\frac{\gamma\cdot(k-1)}{\alpha}\cdot\underbrace{(w_{n}^{o}(0,k-1)-w_{n}^{o}(0,k-2))}_{\leq\alpha,~~\text{by }\eqref{eq:isooD}}+\frac{\mu+\gamma\cdot(b-k)}{\alpha}\cdot\underbrace{(w_{n}^{o}(0,k)-w_{n}^{o}(0,k-1))}_{\leq\alpha,~~\text{by }\eqref{eq:isooD}} \leq\alpha-\gamma.
\end{align*}
For $m=0$ and $k=b$ it holds
\begin{align*}
& \phantomeq w_{n+1}^{o}(0,b)-w_{n+1}^{o}(0,b-1)\\
& =\left[\frac{\lambda}{\alpha}\cdot w_{n}^{o}(1,b)+\frac{\gamma\cdot b}{\alpha}\cdot w_{n}^{o}(0,b-1)+\frac{\mu+\nu}{\alpha}\cdot w_{n}^{o}(0,b)\right]\\
& \phantomeq-\left[\frac{\lambda}{\alpha}\cdot w_{n}^{o}(1,b-1)+\frac{\nu}{\alpha}\cdot w_{n}^{o}(0,b)+\frac{\gamma\cdot(b-1)}{\alpha}\cdot w_{n}^{o}(0,b-2)+\frac{\mu+\gamma}{\alpha}\cdot w_{n}^{o}(0,b-1)\right]\\
& =\frac{\lambda}{\alpha}\cdot\underbrace{(w_{n}^{o}(1,b)-w_{n}^{o}(1,b-1))}_{\leq\alpha,~~\text{by }\eqref{eq:isooD}}+\frac{\gamma\cdot(b-1)}{\alpha}\cdot\underbrace{(w_{n}^{o}(0,b-1)-w_{n}^{o}(0,b-2))}_{\leq\alpha,~~\text{by }\eqref{eq:isooD}}\\
&\phantomeq+\frac{\mu}{\alpha}\cdot\underbrace{(w_{n}^{o}(0,b)-w_{n}^{o}(0,b-1))}_{\leq\alpha,~~\text{by }\eqref{eq:isooD}}
\leq\alpha-\nu-\gamma.
\end{align*}
For $m\geq1$ and $k=1$ it holds
\begin{align*}
& \phantomeq w_{n+1}^{o}(m,1)-w_{n+1}^{o}(m,0)\\
& =\left[\mu+\frac{\lambda}{\alpha}\cdot w_{n}^{o}(m+1,1)+\frac{\mu}{\alpha}\cdot w_{n}^{o}(m-1,0)+\frac{\nu}{\alpha}\cdot w_{n}^{o}(m,2)+\frac{\gamma\cdot b}{\alpha}\cdot w_{n}^{o}(m,1)\right]\\
& \phantomeq-\left[0+\frac{\nu}{\alpha}\cdot w_{n}^{o}(m,1)+\frac{\lambda+\mu+\gamma\cdot b}{\alpha}\cdot w_{n}^{o}(m,0)\right]\\
& =\mu+\frac{\nu}{\alpha}\cdot\underbrace{(w_{n}^{o}(m,2)-w_{n}^{o}(m,1))}_{\leq\alpha,~~\text{by }\eqref{eq:isooD}}+\frac{\gamma\cdot b}{\alpha}\cdot\underbrace{(w_{n}^{o}(m,1)-w_{n}^{o}(m,0))}_{\leq\alpha,~~\text{by }\eqref{eq:isooD}}
+\frac{\lambda}{\alpha}\cdot\underbrace{(w_{n}^{o}(m+1,1)-w_{n}^{o}(m+1,0))}_{\leq\alpha,~~\text{by }\eqref{eq:isooD}}\\
& \phantomeq+\underbrace{\frac{\lambda}{\alpha}\cdot(w_{n}^{o}(m+1,0)-w_{n}^{o}(m,0))-\frac{\mu}{\alpha}\cdot(w_{n}^{o}(m,0)-w_{n}^{o}(m-1,0))}_{\leq0,~~\text{by }\lambda<\mu\text{ and }\eqref{eq:isooB},\ \eqref{eq:isooE}}{\leq}\alpha.
\end{align*}

\noindent
For $m\geq1$ and $k\in\{2,\dots,b-1\}$ it holds
\begin{align*}
& \phantomeq w_{n+1}^{o}(m,k)-w_{n+1}^{o}(m,k-1)\\
& =\left[\mu+\frac{\lambda}{\alpha}\cdot w_{n}^{o}(m+1,k)+\frac{\mu}{\alpha}\cdot w_{n}^{o}(m-1,k-1)+\frac{\nu}{\alpha}\cdot w_{n}^{o}(m,k+1)\right.\\
& \phantomeq\left.+\frac{\gamma\cdot(k-1)}{\alpha}\cdot w_{n}^{o}(m,k-1)+\frac{\gamma\cdot(b-k+1)}{\alpha}\cdot w_{n}^{o}(m,k)\right]\\
& \phantomeq-\left[\mu+\frac{\lambda}{\alpha}\cdot w_{n}^{o}(m+1,k-1)+\frac{\mu}{\alpha}\cdot w_{n}^{o}(m-1,k-2)+\frac{\nu}{\alpha}\cdot w_{n}^{o}(m,k)\right.\\
& \phantomeq\left.+\frac{\gamma\cdot(k-2)}{\alpha}\cdot w_{n}^{o}(m,k-2)+\frac{\gamma\cdot(b-k+2)}{\alpha}\cdot w_{n}^{o}(m,k-1)\right]\\
& =\frac{\lambda}{\alpha}\cdot\underbrace{(w_{n}^{o}(m+1,k)-w_{n}^{o}(m+1,k-1))}_{\leq\alpha,~~\text{by }\eqref{eq:isooD}}+\frac{\mu}{\alpha}\cdot\underbrace{(w_{n}^{o}(m-1,k-1)-w_{n}^{o}(m-1,k-2))}_{\leq\alpha,~~\text{by }\eqref{eq:isooD}}\\
& \phantomeq+\frac{\nu}{\alpha}\cdot\underbrace{(w_{n}^{o}(m,k+1)-w_{n}^{o}(m,k))}_{\leq\alpha,~~\text{by }\eqref{eq:isooD}}+\frac{\gamma\cdot(k-2)}{\alpha}\cdot\underbrace{(w_{n}^{o}(m,k-1)-w_{n}^{o}(m,k-2))}_{\leq\alpha,~~\text{by }\eqref{eq:isooD}}\\
& \phantomeq+\frac{\gamma\cdot(b-k+1)}{\alpha}\cdot\underbrace{(w_{n}^{o}(m,k)-w_{n}^{o}(m,k-1))}_{\leq\alpha,~~\text{by }\eqref{eq:isooD}}\leq\alpha-\gamma.
\end{align*}
For $m\geq1$ and $k=b$ it holds
\begin{align*}
& \phantomeq w_{n+1}^{o}(m,b)-w_{n+1}^{o}(m,b-1)\\
& =\left[\mu+\frac{\lambda}{\alpha}\cdot w_{n}^{o}(m+1,b)+\frac{\mu}{\alpha}\cdot w_{n}^{o}(m-1,b-1)+\frac{\gamma\cdot(b-1)}{\alpha}\cdot w_{n}^{o}(m,b-1)+\frac{\nu+\gamma}{\alpha}\cdot w_{n}^{o}(m,b)\right]\\
& \phantomeq-\left[\mu+\frac{\lambda}{\alpha}\cdot w_{n}^{o}(m+1,b-1)+\frac{\mu}{\alpha}\cdot w_{n}^{o}(m-1,b-2)+\frac{\nu}{\alpha}\cdot w_{n}^{o}(m,b)\right.\\
& \phantomeq\left.+\frac{\gamma\cdot(b-2)}{\alpha}\cdot w_{n}^{o}(m,b-2)+\frac{\gamma\cdot2}{\alpha}\cdot w_{n}^{o}(m,b-1)\right]\\
& =\frac{\lambda}{\alpha}\cdot\underbrace{(w_{n}^{o}(m+1,b)-w_{n}^{o}(m+1,b-1))}_{\leq\alpha,~~\text{by }\eqref{eq:isooD}}+\frac{\mu}{\alpha}\cdot\underbrace{(w_{n}^{o}(m-1,b-1)-w_{n}^{o}(m-1,b-2))}_{\leq\alpha,~~\text{by }\eqref{eq:isooD}}\\
& \phantomeq+\frac{\gamma\cdot(b-2)}{\alpha}\cdot\underbrace{(w_{n}^{o}(m,b-1)-w_{n}^{o}(m,b-2))}_{\leq\alpha,~~\text{by }\eqref{eq:isooD}}+\frac{\gamma}{\alpha}\cdot\underbrace{(w_{n}^{o}(m,b)-w_{n}^{o}(m,b-1))}_{\leq\alpha,~~\text{by }\eqref{eq:isooD}}\\
& \phantomeq\leq(\lambda+\mu+\gamma\cdot(b-1))=\alpha-\nu-\gamma.
\end{align*}

\noindent
$\blacktriangleright$ Fifth, we check \eqref{eq:isooE}.
For $m\geq1$ and $k=0$ it holds 
\begin{align*}
& \phantomeq w_{n+1}^{o}(m+1,0)-2\cdot w_{n+1}^{o}(m,0)+w_{n+1}^{o}(m-1,0)\\
& =\left[\frac{\nu}{\alpha}\cdot w_{n}^{o}(m+1,1)+\frac{\lambda+\mu+\gamma\cdot b}{\alpha}\cdot w_{n}^{o}(m+1,0)\right]-2\cdot\left[\frac{\nu}{\alpha}\cdot w_{n}^{o}(m,1)+\frac{\lambda+\mu+\gamma\cdot b}{\alpha}\cdot w_{n}^{o}(m,0)\right]\\
& \phantomeq+\left[\frac{\nu}{\alpha}\cdot w_{n}^{o}(m-1,1)+\frac{\lambda+\mu+\gamma\cdot b}{\alpha}\cdot w_{n}^{o}(m-1,0)\right]\\
& =\frac{\nu}{\alpha}\cdot\underbrace{(w_{n}^{o}(m+1,1)-2\cdot w_{n}^{o}(m,1)+w_{n}^{o}(m-1,1))}_{\leq0,~~\text{by }\eqref{eq:isooE}}\\
& \phantomeq+\frac{\lambda+\mu+\gamma\cdot b}{\alpha}\cdot\underbrace{(w_{n}^{o}(m+1,0)-2\cdot w_{n}^{o}(m,0)+w_{n}^{o}(m-1,0))}_{\leq0,~~\text{by }\eqref{eq:isooE}}\leq0.
\end{align*}

\noindent
For $m=1$ and $k\in\{1,\dots,b-1\}$ it holds
\begin{align*}
& \phantomeq w_{n+1}^{o}(2,k)-2\cdot w_{n+1}^{o}(1,k)+w_{n+1}^{o}(0,k)\\
& =\left[\mu+\frac{\lambda}{\alpha}\cdot w_{n}^{o}(3,k)+\frac{\mu}{\alpha}\cdot w_{n}^{o}(1,k-1)+\frac{\nu}{\alpha}\cdot w_{n}^{o}(2,k+1)+\frac{\gamma\cdot(k-1)}{\alpha}\cdot w_{n}^{o}(2,k-1)\right.\\
& \phantomeq\left.+\frac{\gamma\cdot(b-k+1)}{\alpha}\cdot w_{n}^{o}(2,k)\right]-2\cdot\left[\mu+\frac{\lambda}{\alpha}\cdot w_{n}^{o}(2,k)+\frac{\mu}{\alpha}\cdot w_{n}^{o}(0,k-1)+\frac{\nu}{\alpha}\cdot w_{n}^{o}(1,k+1)\right.\\
& \phantomeq+\left.\frac{\gamma\cdot(k-1)}{\alpha}\cdot w_{n}^{o}(1,k-1)+\frac{\gamma\cdot(b-k+1)}{\alpha}\cdot w_{n}^{o}(1,k)\right]\\
& \phantomeq+\left[0+\frac{\lambda}{\alpha}\cdot w_{n}^{o}(1,k)+\frac{\nu}{\alpha}\cdot w_{n}^{o}(0,k+1)+\frac{\gamma\cdot k}{\alpha}\cdot w_{n}^{o}(0,k-1)+\frac{\mu+\gamma\cdot (b-k)}{\alpha}\cdot w_{n}^{o}(0,k)\right]\\
& =-\mu+\frac{\lambda}{\alpha}\cdot(w_{n}^{o}(3,k)-2\cdot w_{n}^{o}(2,k)+w_{n}^{o}(1,k))+\frac{\nu}{\alpha}\cdot(w_{n}^{o}(2,k+1)-2\cdot w_{n}^{o}(1,k+1)+w_{n}^{o}(0,k+1))\\
& \phantomeq+\frac{\gamma\cdot (k-1)}{\alpha}\cdot(w_{n}^{o}(2,k-1)-2\cdot w_{n}^{o}(1,k-1)+w_{n}^{o}(0,k-1))+\frac{\gamma}{\alpha}\cdot w_{n}^{o}(0,k-1)\\
& \phantomeq+\frac{\gamma\cdot(b-k+1)}{\alpha}\cdot(w_{n}^{o}(2,k)-2\cdot w_{n}^{o}(1,k)+w_{n}^{o}(0,k))-\frac{\gamma}{\alpha}\cdot w_{n}^{o}(0,k)\\
& \phantomeq+\frac{\mu}{\alpha}\cdot(w_{n}^{o}(1,k-1)-2\cdot w_{n}^{o}(0,k-1)+w_{n}^{o}(0,k))\\
& =\frac{\lambda}{\alpha}\cdot\underbrace{(w_{n}^{o}(3,k)-2\cdot w_{n}^{o}(2,k)+w_{n}^{o}(1,k))}_{\leq0,~~\text{by }\eqref{eq:isooE}}+\frac{\nu}{\alpha}\cdot\underbrace{(w_{n}^{o}(2,k+1)-2\cdot w_{n}^{o}(1,k+1)+w_{n}^{o}(0,k+1))}_{\leq0,~~\text{by }\eqref{eq:isooE}}\\
& \phantomeq+\frac{\gamma\cdot (k-1)}{\alpha}\cdot\underbrace{(w_{n}^{o}(2,k-1)-2\cdot w_{n}^{o}(1,k-1)+w_{n}^{o}(0,k-1))}_{\leq0,~~\text{by }\eqref{eq:isooE}}\\
& \phantomeq+\frac{\gamma\cdot(b-k+1)}{\alpha}\cdot\underbrace{(w_{n}^{o}(2,k)-2\cdot(w_{n}^{o}(1,k)+w_{n}^{o}(0,k))}_{\leq0,~~\text{by }\eqref{eq:isooE}}+\frac{\mu}{\alpha}\cdot\underbrace{(w_{n}^{o}(1,k-1)-w_{n}^{o}(0,k-1))}_{\in[0,\alpha]~~\text{by }\eqref{eq:isooB},\ \eqref{eq:isooC}}-\mu\\
& \phantomeq+\underbrace{\frac{\mu}{\alpha}\cdot(w_{n}^{o}(0,k)-w_{n}^{o}(0,k-1))-\frac{\gamma}{\alpha}\cdot(w_{n}^{o}(0,k)-w_{n}^{o}(0,k-1))}_{=0,~~\text{by }\gamma=\mu}\leq0.
\end{align*}

\noindent
For $m=1$ and $k=b$ it holds
\begin{align*}
& \phantomeq w_{n+1}^{o}(2,b)-2\cdot w_{n+1}^{o}(1,b)+w_{n+1}^{o}(0,b)\\
& =\left[\mu+\frac{\lambda}{\alpha}\cdot w_{n}^{o}(3,b)+\frac{\mu}{\alpha}\cdot w_{n}^{o}(1,b-1)+\frac{\gamma\cdot(b-1)}{\alpha}\cdot w_{n}^{o}(2,b-1)+\frac{\nu+\gamma}{\alpha}\cdot w_{n}^{o}(2,b)\right]\\
& \phantomeq-2\cdot\left[\mu+\frac{\lambda}{\alpha}\cdot w_{n}^{o}(2,b)+\frac{\mu}{\alpha}\cdot w_{n}^{o}(0,b-1)+\frac{\gamma\cdot(b-1)}{\alpha}\cdot w_{n}^{o}(1,b-1)+\frac{\nu+\gamma}{\alpha}\cdot w_{n}^{o}(1,b)\right]\\
& \phantomeq+\left[0+\frac{\lambda}{\alpha}\cdot w_{n}^{o}(1,b)+\frac{\gamma\cdot b}{\alpha}\cdot w_{n}^{o}(0,b-1)+\frac{\mu+\nu}{\alpha}\cdot w_{n}^{o}(0,b)\right]\\
& =-\mu+\frac{\lambda}{\alpha}\cdot(w_{n}^{o}(3,b)-2\cdot w_{n}^{o}(2,b)+w_{n}^{o}(1,b))+\frac{\nu}{\alpha}\cdot(w_{n}^{o}(2,b)-2\cdot w_{n}^{o}(1,b)+w_{n}^{o}(0,b))\\
& \phantomeq+\frac{\gamma\cdot(b-1)}{\alpha}\cdot(w_{n}^{o}(2,b-1)-2\cdot w_{n}^{o}(1,b-1)+w_{n}^{o}(0,b-1))+\frac{\gamma}{\alpha}\cdot w_{n}^{o}(0,b-1)\\
& \phantomeq+\frac{\gamma}{\alpha}\cdot(w_{n}^{o}(2,b)-2\cdot w_{n}^{o}(1,b)+w_{n}^{o}(0,b))-\frac{\gamma}{\alpha}\cdot w_{n}^{o}(0,b)\\
& \phantomeq+\frac{\mu}{\alpha}\cdot(w_{n}^{o}(1,b-1)-2\cdot w_{n}^{o}(0,b-1)+w_{n}^{o}(0,b))\\
& =\frac{\lambda}{\alpha}\cdot\underbrace{(w_{n}^{o}(3,b)-2\cdot w_{n}^{o}(2,b)+w_{n}^{o}(1,b))}_{\leq0,~~\text{by }\eqref{eq:isooE}}+\frac{\nu}{\alpha}\cdot\underbrace{(w_{n}^{o}(2,b)-2\cdot w_{n}^{o}(1,b)+w_{n}^{o}(0,b))}_{\leq0,~~\text{by }\eqref{eq:isooE}}\\
& \phantomeq+\frac{\gamma\cdot(b-1)}{\alpha}\cdot\underbrace{(w_{n}^{o}(2,b-1)-2\cdot w_{n}^{o}(1,b-1)+w_{n}^{o}(0,b-1))}_{\leq0,~~\text{by }\eqref{eq:isooE}}\\
& \phantomeq+\frac{\gamma}{\alpha}\cdot\underbrace{(w_{n}^{o}(2,b)-2\cdot w_{n}^{o}(1,b)+w_{n}^{o}(0,b))}_{\leq0,~~\text{by }\eqref{eq:isooE}}\underbrace{-\mu+\underbrace{\frac{\mu}{\alpha}\cdot(w_{n}^{o}(1,b-1)-w_{n}^{o}(0,b-1))}_{\in[0,\alpha]~~\text{by }\eqref{eq:isooB},\eqref{eq:isooC}}}_{\leq0}\\
& \phantomeq+\underbrace{\frac{\mu}{\alpha}\cdot(w_{n}^{o}(0,b)-w_{n}^{o}(0,b-1))-\frac{\gamma}{\alpha}\cdot(w_{n}^{o}(0,b)-w_{n}^{o}(0,b-1))}_{=0,~~\text{by}~\gamma=\mu}\leq0.
\end{align*}

\noindent
For $m\geq2$ and $k\in\{1,\dots,b-1\}$ it holds
\begin{align*}
& \phantomeq w_{n+1}^{o}(m+1,k)-2\cdot w_{n+1}^{o}(m,k)+w_{n+1}^{o}(m-1,k)\\
& =\left[\mu+\frac{\lambda}{\alpha}\cdot w_{n}^{o}(m+2,k)+\frac{\mu}{\alpha}\cdot w_{n}^{o}(m,k-1)+\frac{\nu}{\alpha}\cdot w_{n}^{o}(m+1,k+1)\right.\\
& \phantomeq+\left.\frac{\gamma\cdot(k-1)}{\alpha}\cdot w_{n}^{o}(m+1,k-1)+\frac{\gamma\cdot(b-k+1)}{\alpha}\cdot w_{n}^{o}(m+1,k)\right]\\
& \phantomeq-2\cdot\left[\mu+\frac{\lambda}{\alpha}\cdot w_{n}^{o}(m+1,k)+\frac{\mu}{\alpha}\cdot w_{n}^{o}(m-1,k-1)+\frac{\nu}{\alpha}\cdot w_{n}^{o}(m,k+1)\right.\\
& \phantomeq\left.+\frac{\gamma\cdot(k-1)}{\alpha}\cdot w_{n}^{o}(m,k-1)+\frac{\gamma\cdot (b-k+1)}{\alpha}\cdot w_{n}^{o}(m,k)\right]\\
& \phantomeq+\left[[\mu+\frac{\lambda}{\alpha}\cdot w_{n}^{o}(m,k)+\frac{\mu}{\alpha}\cdot w_{n}^{o}(m-2,k-1)+\frac{\nu}{\alpha}\cdot w_{n}^{o}(m-1,k+1)\right.\\
& \phantomeq\left.+\frac{\gamma\cdot(k-1)}{\alpha}\cdot w_{n}^{o}(m-1,k-1)+\frac{\gamma\cdot(b-k+1)}{\alpha}\cdot w_{n}^{o}(m-1,k)\right]\\
& =\frac{\lambda}{\alpha}\cdot\underbrace{(w_{n}^{o}(m+2,k)-2\cdot w_{n}^{o}(m+1,k)+w_{n}^{o}(m,k))}_{\leq0,~~\text{by }\eqref{eq:isooE}}\\
& \phantomeq+\frac{\mu}{\alpha}\cdot\underbrace{(w_{n}^{o}(m,k-1)-2\cdot w_{n}^{o}(m-1,k-1)+w_{n}^{o}(m-2,k-1))}_{\leq0,~~\text{by }\eqref{eq:isooE}}\\
& \phantomeq+\frac{\nu}{\alpha}\cdot\underbrace{(w_{n}^{o}(m+1,k+1)-2\cdot w_{n}^{o}(m,k+1)+w_{n}^{o}(m-1,k+1))}_{\leq0,~~\text{by }\eqref{eq:isooE}}\\
& \phantomeq+\frac{\gamma \cdot (k-1)}{\alpha}\cdot\underbrace{(w_{n}^{o}(m+1,k-1)-2\cdot w_{n}^{o}(m,k-1)+w_{n}^{o}(m-1,k-1))}_{\leq0,~~\text{by }\eqref{eq:isooE}}\\
& \phantomeq+\frac{\gamma\cdot(b-k+1)}{\alpha}\cdot\underbrace{(w_{n}^{o}(m+1,k)-2\cdot w_{n}^{o}(m,k)+w_{n}^{o}(m-1,k))}_{\leq0,~~\text{by }\eqref{eq:isooE}}\leq0.
\end{align*}

\noindent
For $m\geq2$ and $k=b$ it holds
\begin{align*}
& \phantomeq w_{n+1}^{o}(m+1,b)-2\cdot w_{n+1}^{o}(m,b)+w_{n+1}^{o}(m-1,b)\\
& =\left[\mu+\frac{\lambda}{\alpha}\cdot w_{n}^{o}(m+2,b)+\frac{\mu}{\alpha}\cdot w_{n}^{o}(m,b-1)+\frac{\gamma\cdot(b-1)}{\alpha}\cdot w_{n}^{o}(m+1,b-1)+\frac{\nu+\gamma}{\alpha}\cdot w_{n}^{o}(m+1,b)\right]\\
& \phantomeq-2\cdot\left[\mu+\frac{\lambda}{\alpha}\cdot w_{n}^{o}(m+1,b)+\frac{\mu}{\alpha}\cdot w_{n}^{o}(m-1,b-1)+\frac{\gamma\cdot(b-1)}{\alpha}\cdot w_{n}^{o}(m,b-1)+\frac{\nu+\gamma}{\alpha}\cdot w_{n}^{o}(m,b)\right]\\
& \phantomeq+\left[\mu+\frac{\lambda}{\alpha}\cdot w_{n}^{o}(m,b)+\frac{\mu}{\alpha}\cdot w_{n}^{o}(m-2,b-1).+\frac{\gamma\cdot(b-1)}{\alpha}\cdot w_{n}^{o}(m-1,b-1)+\frac{\nu+\gamma}{\alpha}\cdot w_{n}^{o}(m-1,b)\right]\\
& =\frac{\lambda}{\alpha}\cdot\underbrace{(w_{n}^{o}(m+2,b)-2\cdot w_{n}^{o}(m+1,b)+w_{n}^{o}(m,b))}_{\leq0,~~\text{by }\eqref{eq:isooE}}\\
& \phantomeq+\frac{\mu}{\alpha}\cdot\underbrace{(w_{n}^{o}(m,b-1)-2\cdot w_{n}^{o}(m-1,b-1)+w_{n}^{o}(m-2,b-1))}_{\leq0,~~\text{by }\eqref{eq:isooE}}\\
& \phantomeq+\frac{\gamma\cdot(b-1)}{\alpha}\cdot\underbrace{(w_{n}^{o}(m+1,b-1)-2\cdot w_{n}^{o}(m,b-1)+w_{n}^{o}(m-1,b-1))}_{\leq0,~~\text{by }\eqref{eq:isooE}}\\
& \phantomeq+\frac{\nu+\gamma}{\alpha}\cdot\underbrace{(w_{n}^{o}(m+1,b)-2\cdot w_{n}^{o}(m,b)+w_{n}^{o}(m-1,b))}_{\leq0,~~\text{by }\eqref{eq:isooE}}\leq0.
\end{align*}

\endproof


\section*{Acknowledgements}
We thank the associated editor and two reviewers for their careful reading of previous versions and their constructive criticism which
enhanced the article.

\end{document}